\theoremstyle{plain}
\newtheorem{theorem}{Theorem}[section]
\newtheorem{proposition}[theorem]{Proposition}
\newtheorem{lemma}[theorem]{Lemma}
\newtheorem{corollary}[theorem]{Corollary}
\newtheorem{MainThm}{Theorem}
\theoremstyle{definition}
\newtheorem{definition}[theorem]{Definition}
\newtheorem{claim}[theorem]{Claim}
\newtheorem{remark}[theorem]{Remark}
\numberwithin{equation}{section}
\newcommand{\im}{\operatorname{Im}}
\newcommand{\bS}{\mathbb{S}}
\newcommand{\bQ}{\mathbb{Q}}
\newcommand{\bR}{\mathbb{R}}
\newcommand{\bZ}{\mathbb{Z}}
\newcommand{\bC}{\mathbb{C}}
\newcommand{\bK}{\mathbb{K}}
\newcommand{\bG}{\mathbb{G}}
\newcommand{\bH}{\mathbb{H}}
\newcommand{\bN}{\mathbb{N}}
\newcommand{\bL}{\mathbb{L}}
\newcommand{\Harm}{\mathcal{H}}
\newcommand{\cA}{\mathcal{A}}
\newcommand{\cl}{\mathcal{L}}
\newcommand{\cS}{\mathcal{S}}
\newcommand{\cT}{\mathcal{T}}
\newcommand{\fn}{\mathfrak{n}}
\newcommand{\fg}{\mathfrak{g}}
\newcommand{\fk}{\mathfrak{k}}
\newcommand{\fl}{\mathfrak{l}}
\newcommand{\rank}{\operatorname{rank}}
\newcommand{\sign}{\operatorname{sign}}
\newcommand{\Sp}{\mathrm{Sp}}
\newcommand{\USp}{\mathrm{USp}}
\newcommand{\Ort}{\mathrm{O}}
\newcommand{\SO}{\mathrm{SO}}
\newcommand{\GL}{\mathrm{GL}}
\newcommand{\SL}{\mathrm{SL}}
\newcommand{\U}{\mathrm{U}}
\newcommand{\sm}{sm}
\newcommand{\Diff}{\mathrm{Diff}}
\newcommand{\hAut}{\mathrm{Aut}}
\newcommand{\symb}{\mathrm{symb}}
\newcommand{\ch}{\mathrm{ch}}
\newcommand{\ph}{\mathrm{ph}}
\newcommand{\ind}{\operatorname{ind}}
\newcommand{\map}{\operatorname{map}}
\newcommand{\blockdiff}{\widetilde{\Diff}}
\newcommand{\blockaut}{\widetilde{\hAut}}
\newcommand{\cN}{\mathcal{N}}
\newcommand{\Tor}{\mathfrak{Tor}}
\newcommand{\qh}{\mathrm{qh}}
\newcommand{\ceil}[1]{\lceil #1 \rceil}
\newcommand{\loopinf}[1]{\Omega^{\infty #1}}
\newcommand{\MTSO}{\mathrm{MTSO}}
\newcommand{\KO}{\mathrm{KO}}
\newcommand{\KU}{\mathrm{KU}}
\newcommand{\MTthetan}{\mathrm{MT}\theta^n}
\newcommand{\Gr}{\mathrm{Gr}}
\newcommand{\Emb}{\mathrm{Emb}}
\newcommand{\B}[1]{{\bf #1}}
\newcommand{\Eig}{\mathrm{Eig}}
\newcommand\lra{\longrightarrow}
\newcommand\colim{\operatorname*{colim}}
\newcommand{\hcoker}{/\!\!/}
\title{Torelli spaces of high-dimensional manifolds}
\author{Johannes Ebert}
\thanks{}
\email{johannes.ebert@uni-muenster.de}
\address{Mathematisches Institut der
Westf{\"a}lische Wilhelms-Universit{\"a}t M{\"u}nster\\
Einsteinstr. 62\\
DE-48149 M{\"u}nster\\
Germany }
\author{Oscar Randal-Williams}
\thanks{}
\email{o.randal-williams@dpmms.cam.ac.uk}
\address{Centre for Mathematical Sciences\\
Wilberforce Road\\
Cambridge CB3 0WB\\
UK}
\keywords{Cohomology of diffeomorphism groups, Torelli groups, cohomology of arithmetic groups, Miller-Morita-Mumford classes, block diffeomorphisms}
\subjclass{11F75, 55R20, 55R35, 55R40, 55R60, 57R20, 57R65, 57S05}
\begin{document}

\begin{abstract}
The Torelli group of a manifold is the group of all diffeomorphisms which act as the identity on the homology of the manifold. In this paper, we calculate the invariant part (invariant under the action of the automorphisms of the homology) of the cohomology of the classifying space of the Torelli group of certain high-dimensional, highly connected manifolds, with rational coefficients and in a certain range of degrees. This is based on Galatius--Randal-Williams' work on the diffeomorphism groups of these manifolds, Borel's classical results on arithmetic groups, and methods from surgery theory and pseudoisotopy theory. As a corollary, we find that all Miller--Morita--Mumford characteristic classes are nontrivial in the cohomology of the classifying space of the Torelli group, except for those associated with the Hirzebruch class, whose vanishing is forced by the family index theorem.
\end{abstract}

\maketitle

\setcounter{tocdepth}{1}
\tableofcontents

\section{Introduction}

The high-dimensional manifolds of the title of this paper are the manifolds $W_{g}^{2n}:= \sharp^g (S^n \times S^n)$ with $n \geq 3$ (though our main result only has content for much larger values of $n$). Let $D^{2n} \subset W_{g}^{2n}$ be a fixed embedded closed disc and let $\Diff(W_{g}^{2n},D^{2n})$ denote the topological group of diffeomorphisms of $W_{g}^{2n}$ which restrict to the identity on a neighbourhood of $D^{2n}$. Alternatively, let $W_{g,1}^{2n} := W_{g}^{2n} \setminus \mathrm{int}(D^{2n})$ be the manifold with boundary obtained by removing the interior of $D^{2n}$, and $\Diff_\partial(W_{g,1}^{2n})$ denote the group of diffeomorphisms of $W_{g,1}^{2n}$ which restrict to the identity near the boundary. Extending diffeomorphisms over $D^{2n}$ by the identity map gives an isomorphism $\Diff_\partial(W_{g,1}^{2n}) \cong \Diff(W_{g}^{2n},D^{2n})$ of topological groups, and we shall use them interchangeably.

The \emph{Torelli group} $\Tor_{g, 1}^{2n} \subset \Diff_\partial (W^{2n}_{g,1})$ is defined to be the subgroup of those diffeomorphisms which induce the identity automorphism of $H_n (W_{g,1}^{2n};\bZ) \cong \bZ^{2g}$. There is a fibre sequence
\begin{equation*}
B \Tor_{g, 1}^{2n} \lra B \Diff_\partial (W^{2n}_{g,1}) \lra B \Gamma(W^{2n}_{g, 1}),
\end{equation*}
where $\Gamma(W^{2n}_{g, 1}) \subset \GL (H_n (W_{g,1}^{2n}; \bZ))$ is a certain arithmetic group, which we shall describe in detail in Section \ref{sec:DefiningGamma}. The rational cohomology of the base and the total space of this fibre sequence are completely known in a stable range, due to the work of Borel (on the cohomology arithmetic groups) and the work of Galatius--Randal-Williams (on the cohomology of diffeomorphism groups).

The aim of this paper is to compute the $\Gamma(W^{2n}_{g, 1})$-invariant part of the rational cohomology of $B \Tor_{g,1}^{2n}$, for large $g$ and $n$. The result is that the behaviour is in a sense as simple as possible.

\begin{MainThm}\label{main-result}
The natural map 
\begin{equation*}
H^* (B \Diff_\partial (W_{g, 1}^{2n}); \bQ)/ \big(\im H^{*>0} (B \Gamma (W_{g, 1}^{2n});\bQ) \big) \lra H^* (B \Tor_{g, 1}^{2n};\bQ)^{\Gamma (W_{g, 1}^{2n})}
\end{equation*}
(the brackets mean ``ideal generated by'') is an isomorphism in degrees $* \leq C_{g}^{2n}$, where $C_{g}^{2n}$ is the largest integer with 
\begin{enumerate}[(i)]
\item $C_{g}^{2n} \leq (g-3)/2$;
\item $2n \geq \max\{2C_{g}^{2n}+7,3C_{g}^{2n}+4\}$.
\end{enumerate}
\end{MainThm}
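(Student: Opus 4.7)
The natural approach is via the Serre spectral sequence of the fibre sequence
$$B\Tor_{g,1}^{2n} \lra B\Diff_\partial(W_{g,1}^{2n}) \lra B\Gamma(W_{g,1}^{2n}),$$
with $E_2^{p,q} = H^p(B\Gamma(W_{g,1}^{2n}); H^q(B\Tor_{g,1}^{2n};\bQ))$, converging to $H^{p+q}(B\Diff_\partial(W_{g,1}^{2n});\bQ)$. The plan is to identify the $E_2$-page via a Borel-type theorem, prove degeneration at $E_2$ in the given range, and read off the asserted isomorphism by analyzing the edge map and its kernel.

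The first main step is to show that, for each $q$ in the relevant range, the $\Gamma$-representation $H^q(B\Tor_{g,1}^{2n};\bQ)$ is (a summand of) the restriction of a rational representation of the ambient algebraic group of $\Gamma$. Given this algebraicity, Borel's theorem on the rational cohomology of arithmetic groups with algebraic coefficients gives, in a stable range,
$$H^p(B\Gamma; H^q(B\Tor_{g,1}^{2n};\bQ)) \cong H^p(B\Gamma;\bQ) \otimes H^q(B\Tor_{g,1}^{2n};\bQ)^\Gamma,$$
so $\Gamma$-invariants in fibre cohomology control the entire $E_2$-page. Producing such an algebraic description of $H^q(B\Tor)$ is where the surgery-theoretic and pseudoisotopy inputs enter: by comparing $B\Diff_\partial(W_{g,1}^{2n})$ with the block diffeomorphism space $B\blockdiff_\partial(W_{g,1}^{2n})$ through Morlet-type smoothing and Igusa's concordance stability, one can in a range approximate $B\Tor$ by a space built from surgery-theoretic pieces on which the $\Gamma$-action visibly extends to an algebraic one. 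The numerical constraints $2n \geq 2C+7$ and $2n \geq 3C+4$ are exactly the range of Igusa's stability theorem, consistent with this strategy.

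With the $E_2$-page identified, degeneration at $E_2$ would be argued as follows. By Galatius--Randal-Williams, $H^*(B\Diff_\partial(W_{g,1}^{2n});\bQ)$ is a polynomial ring on Miller--Morita--Mumford classes in the stable range $* \leq (g-3)/2$ (matching constraint (i)), and by the family index theorem the MMM classes associated to the Hirzebruch $L$-class pull back from $H^*(B\Gamma;\bQ)$, which is itself polynomial by Borel. A dimension count then forces collapse at $E_2$, giving an isomorphism of $H^*(B\Gamma;\bQ)$-modules
$$H^*(B\Diff_\partial(W_{g,1}^{2n});\bQ) \cong H^*(B\Gamma;\bQ) \otimes H^*(B\Tor_{g,1}^{2n};\bQ)^\Gamma$$
in the given range. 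Quotienting by the ideal generated by $H^{*>0}(B\Gamma;\bQ)$ recovers $H^*(B\Tor_{g,1}^{2n};\bQ)^\Gamma$, with the quotient map equalling the edge map of the spectral sequence; this is the claimed isomorphism.

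The main obstacle will be the algebraicity step: the $\Gamma$-action on $H^q(B\Tor)$ is a priori only geometric, and showing it extends to a rational representation of the ambient algebraic group in a stable range demands a genuinely algebraically-controlled model for the Torelli space, built from surgery theory (to describe $B\blockdiff$ homotopy-theoretically) and pseudoisotopy theory (to bridge $B\blockdiff$ and $B\Diff$). Obtaining the sharp range reflected in the statement, rather than just some stable range, is the technical heart of the proof; once algebraicity is in place, the spectral sequence argument is essentially formal.
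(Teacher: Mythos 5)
Your overall framework is on the right track, and you correctly identify the two main pillars: arithmeticity of $H^q(B\Tor_{g,1}^{2n};\bQ)$ as a $\Gamma$-representation, and Borel's theorem on cohomology of arithmetic groups with algebraic coefficients giving a product decomposition of the $E_2$-page. You are also right that the numerical constraints reflect the passage from block diffeomorphisms to diffeomorphisms (though the paper obtains $2n \geq \max\{2C+7,3C+4\}$ from Morlet's lemma of disjunction plus Farrell--Hsiang/Weiss--Williams, not Igusa's concordance stability as such).

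However, your degeneration step has a genuine gap. You write that ``a dimension count then forces collapse at $E_2$,'' but no such dimension count is available: the $E_2$-page is $H^*(B\Gamma;\bQ) \otimes H^*(B\Tor;\bQ)^\Gamma$, and the factor $H^*(B\Tor;\bQ)^\Gamma$ is precisely the unknown you are trying to compute. Knowing that $H^*(B\Diff)$ is a polynomial ring and that the $\kappa_{\cl_i}$ pull back from $B\Gamma$ tells you that the bottom edge $E_2^{*,0}$ survives, but it does not rule out differentials emanating from $E_r^{0,q}$ into higher filtration for $q>0$; in a multiplicative first-quadrant spectral sequence, injectivity of the base edge map does not imply collapse. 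What the paper actually does is construct, via the Atiyah--Singer family index theorem and the Madsen--Tillmann--Weiss spectrum, a homotopy-commutative square comparing $B\Diff \to B\Gamma$ with the infinite-loop-space fibration $\loopinf{}_0\MTthetan_\bQ \to \loopinf{+k_n}_0\KO_\bQ$. The lower fibration has simply connected base, collapses at $E_2$ automatically, and has the same base and total-space cohomology (in the stable range, by Borel and Galatius--Randal-Williams). Applying Zeeman's comparison theorem to this map of spectral sequences then yields \emph{both} the collapse of the upper spectral sequence and the identification of the fibre cohomology simultaneously. This comparison argument --- and the index-theoretic construction of the map $\symb$ that makes it possible --- is the missing ingredient; without it your proposal cannot conclude.
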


Throughout the paper we will hold $g$ and $2n$ fixed, and a number $q$ will be called ``in the stable range'' if $q \leq C_{g}^{2n}$. The left hand side of the expression in Theorem \ref{main-result} can be explicitly computed, using the results of \cite{Borel1}, \cite{GRW1}, and \cite{GRW2}, and the result of this calculation is most easily expressed in terms of Miller--Morita--Mumford classes, which we now define. There is a bundle of closed manifolds
$$\pi: (W_{g} \times E\Diff_\partial (W_{g, 1}^{2n}))/\Diff_\partial (W_{g, 1}^{2n}) =: E \lra B\Diff_\partial (W_{g, 1}^{2n}),$$
and we write $T_v E$ for its vertical tangent bundle, an oriented real vector bundle of rank $2n$ over $E$. For each rational characteristic class $c \in H^{k+2n}(B\SO(2n);\bQ)$ of such vector bundles, we define
$$\kappa_c := \pi_!(c(T_v)) \in H^k(B\Diff_\partial (W_{g, 1}^{2n});\bQ),$$
the \emph{generalised Miller--Morita--Mumford class} associated to $c$. We have in particular the Hirzebruch $\cl$-classes $\cl_i \in H^{4i}(B\SO(2n);\bQ)$, and so classes
$$\kappa_{\cl_{a}\cl_b} \in H^{4a+4b-2n}(B\Diff_\partial (W_{g, 1}^{2n});\bQ).$$
These classes may be pulled back to $B \Tor_{g, 1}^{2n}$, where they give $\Gamma(W_{g,1}^{2n})$-invariant elements of cohomology (precisely because they are pulled back from $B\Diff_\partial (W_{g, 1}^{2n})$).

\begin{MainThm}\label{main-result:calc}
The map
$$\bQ\big[\kappa_{\cl_{a}\cl_b} \,\vert \, \tfrac{n+1}{4} \leq a \leq  b \big] \lra H^* (B \Tor_{g, 1}^{2n};\bQ)^{\Gamma (W_{g, 1}^{2n})}$$
is an isomorphism in degrees $* \leq C_g^{2n}$.
\end{MainThm}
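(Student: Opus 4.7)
By Theorem~\ref{main-result} it suffices to compute the quotient
\[
A := H^*(B\Diff_\partial(W_{g,1}^{2n});\bQ)\big/\big(\im H^{>0}(B\Gamma(W_{g,1}^{2n});\bQ)\big)
\]
in the stable range, and to identify it with $\bQ[\kappa_{\cl_a\cl_b}\mid(n+1)/4\le a\le b]$. By the Galatius--Randal-Williams theorem, $H^*(B\Diff_\partial(W_{g,1}^{2n});\bQ)$ is in the stable range a free polynomial algebra on MMM classes $\kappa_c$ indexed by a basis of monomials of degree $>2n$ in $H^*(BO(2n)\langle n\rangle;\bQ)$; since the $n$-connected cover kills rational cohomology in degrees $\le n$, its polynomial generators are exactly $\{\cl_a\mid(n+1)/4\le a\le n-1\}\cup\{e\}$. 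By Borel, $H^*(B\Gamma;\bQ)$ is a free polynomial algebra on characteristic classes of the tautological $\Gamma$-representation, with generators in degrees $\equiv 2\pmod 4$ (for $n$ odd) or $\equiv 0\pmod 4$ (for $n$ even), possibly together with an Euler class in degree $2g$ that lies well outside the stable range.

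\textbf{Identifying the image of $H^*(B\Gamma)$.} The tautological $\Gamma$-representation pulls back to the flat middle-cohomology bundle $\Harm^n\to B\Diff$. Applying the family signature theorem to the smooth bundle $\pi:E\to B\Diff$ computes the Chern character of an appropriate virtual combination of the $\pm$ eigenspaces of Poincar\'e duality on $\Harm^n$ as $\pi_!\cl(T_v)$, with a parallel formula in the symplectic case $n$ odd. Comparing degrees, one obtains a bijective identification in which each Borel polynomial generator corresponds, modulo decomposables in the MMM algebra, to a weight-one MMM class $\kappa_{\cl_j}$ (with an explicit nonzero proportionality constant coming from the $\cl$-character expansion). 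Consequently, in $A$ each such $\kappa_{\cl_j}$ becomes expressible as a polynomial in MMM classes of strictly lower degree, effectively removing all weight-one generators from the polynomial presentation of $A$.

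\textbf{Surviving generators via a degree count.} The remaining candidate generators of $A$ are those $\kappa_c$ with $c$ of $\cl$-weight $\ge 2$ and no Euler factor. The numerical constraints defining $C_g^{2n}$ force the surviving classes to have $\cl$-weight exactly $2$: any weight-$k$ monomial $\cl_{a_1}\cdots\cl_{a_k}$ with $k\ge 3$ and each $a_i\ge(n+1)/4$ gives
\[
\deg\kappa_{\cl_{a_1}\cdots\cl_{a_k}}\ge 3(n+1)-2n=n+3,
\]
which exceeds $C_g^{2n}\le(2n-4)/3$ by condition~(ii); similarly, any Euler-containing generator $\kappa_{e\cdot c}$ with $c\ne 1$ has degree $\ge n+1>C_g^{2n}$. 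Hence the polynomial generators of $A$ in the stable range are exactly the $\kappa_{\cl_a\cl_b}$ with $(n+1)/4\le a\le b$. Their algebraic independence follows from a Poincar\'e series comparison: in the stable range, the series of $H^*(B\Diff;\bQ)$ provided by Galatius--Randal-Williams factors as the product of the Borel Poincar\'e series and the Poincar\'e series of $\bQ[\kappa_{\cl_a\cl_b}]$.

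\textbf{Main obstacle.} The principal technical step is the application of the family signature theorem to make precise the identification of each Borel polynomial generator with a weight-one MMM class $\kappa_{\cl_j}$ modulo decomposables. Tracking the combinatorial constants in the Chern character expansion of $\pi_!\cl(T_v)$, verifying that the leading coefficients are nonzero, and confirming the degree-by-degree bijection between Borel generators and weight-one $\kappa_{\cl_j}$'s, is the crux of the argument; once this is in hand, the remaining steps are a clean degree count using the numerical constraints defining $C_g^{2n}$.
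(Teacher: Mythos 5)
Your proposal is correct and follows essentially the same route as the paper's: Theorem~\ref{main-result} is used to reduce to identifying the quotient algebra, which the paper does via the GRW computation \eqref{eq:cohom-calc} together with the Atiyah--Singer index calculation in Section~\ref{section:gmtw-theory-index-theory} (showing $\symb^*$ has image exactly the subalgebra $\bQ[\kappa_{\cl_j}]$, which is the ``main obstacle'' you flag), followed by the degree count you give (the paper's Theorem~\ref{thm:StableCalc} and the short proof after it). Two minor remarks: the index computation in the paper gives $\ind(D)^*(\ph_i)=(-1/4)^i\kappa_{\cl_{i+m}}$ with no decomposable correction, so in the quotient each $\kappa_{\cl_j}$ vanishes outright rather than becoming ``expressible in lower-degree classes''; and the Poincar\'e-series step for algebraic independence is superfluous, since the $\kappa_{\cl_j}$ are part of a free polynomial generating set of $H^*(\loopinf{}_0\MTthetan;\bQ)$ so the quotient by the ideal they generate is automatically free on the remaining generators.
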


\begin{remark}
The reader familiar with mapping class groups of surfaces will have noted that the name ``Torelli group'' is borrowed from that subject. If $\Gamma_g$ is the mapping class group of a closed oriented genus $g$ surface, the Torelli group $\cT_g$ is the kernel of the surjective map $\Gamma_g \to \Sp_{2g}(\bZ)$. The cohomology of the mapping class group is---in the stable range---known, by the Madsen--Weiss theorem \cite{MW}. It is a long-standing question whether the even Mumford classes $\kappa_{2i} \in H^{4i}(\Gamma_g;\bQ)$ remain non-zero when restricted to $\cT_g$ (the odd ones are easily seen to vanish). Theorem \ref{main-result} answers this question in (much) higher dimensions, and can perhaps be considered as suggesting that it is likely that the $\kappa_{2i} \in H^{4i}(\cT_g;\bQ)$ are also non-zero.
\end{remark}

\subsection{Overview of the proof}

The proof of Theorem \ref{main-result} depends on a number of deep results:
\begin{enumerate}[(i)]
\item The computation of the cohomology of $B\Diff_\partial(W_{g,1}^{2n})$ in the stable range, due to Galatius and Randal-Williams.
\item The computation of the cohomology of arithmetic groups, due to Borel.
\item The Atiyah--Singer family index theorem.
\item The space version of the simply connected surgery exact sequence, due to Quinn, and related computations by Berglund and Madsen.
\item Morlet's lemma of disjunction.
\end{enumerate}

In order to make the overall structure of the argument clear, we present an overview. Our argument is a calculation of the Leray--Serre spectral sequence of the fibre sequence 
\begin{equation*}
B \Tor_{g, 1}^{2n} \lra B \Diff_\partial (W^{2n}_{g,1}) \lra B \Gamma(W^{2n}_{g, 1}),
\end{equation*}
with rational coefficients, in the stable range. The group $\Gamma(W^{2n}_{g, 1})$ is an arithmetic group, sitting inside $\Sp_{2g}(\bZ)$ (if $n $ odd) or $\Ort_{g,g}(\bZ)$ (if $n$ even). Borel has calculated the cohomology of such groups (originally with real coefficients, because he uses differential forms, but this implies the rational statement), and his result may be stated by saying that a certain map 
\begin{equation*}
\beta_{g}^{2n} : B \Gamma(W^{2n}_{g, 1}) \lra \Omega^{k_n}_{0} B\Ort := \begin{cases}
B\Ort & \text{if $n$ is even}\\
\Omega^{2}_{0} B\Ort & \text{if $n$ is odd}
\end{cases}
\end{equation*}
induces an isomorphism on rational cohomology in a range of degrees that depends linearly on $g$.

The stable homology of $B \Diff_\partial (W_{g,1}^{2n})$ has been computed by Galatius and Randal-Williams, as we have already mentioned. Their result is that a certain map 
\begin{equation*}
\alpha_{g}^{2n} : B \Diff_\partial(W_{g,1}^{2n}) \lra \Omega^{\infty}_{0} \MTthetan
\end{equation*}
to the infinite loop space of a certain Thom spectrum is a homology equivalence (again, in a range depending linearly on $g$). We will define an infinite loop map
$$\mathrm{symb} : \Omega^{\infty}_{0}\MTthetan \lra \Omega^{k_n}_{0} B\Ort$$
and we will construct, using the Atiyah--Singer family index theorem, a map of fibration sequences (see Theorem \ref{mt-spectra-index} and the discussion in \S \ref{section:final-argument})
\begin{equation}\label{main-diagram}
\begin{gathered}
\xymatrix{
B \Tor_{g,1}^{2n} \ar[r] \ar[d] & B \Diff_\partial(W_{g,1}^{2n}) \ar[r]^{\zeta} \ar[d]^{\alpha_{g}^{2n}} & B \Gamma (W_{g,1}^{2n})\ar[d]^{\beta_{g}^{2n}}\\
\loopinf{} F \ar[r] & \Omega^{\infty}_{0}\MTthetan_{\bQ} \ar[r]^{\mathrm{symb}} & (\Omega^{k_n}_{0} B\Ort)_{\bQ}.
}
\end{gathered}
\end{equation}
The $\bQ$-subscript denotes rationalization in the sense of homotopy theory.
Considering the rational cohomological Leray--Serre spectral sequences, we get a morphism of spectral sequences, which converges to an isomorphism in the stable range (by Galatius and Randal-Williams). The base term in an isomorphism in the stable range (by Borel). We can conclude, by Zeeman's comparison theorem, that the map $H^* ( \loopinf{} F;\bQ) \to H^{*}(B \Tor_{g, 1}^{2n};\bQ)^{\Gamma (W_{g,1}^{2n})}$ is an isomorphism in the stable range, \emph{provided that the $E^2$-pages of the spectral sequences both have a product structure}. The lower fibration sequence is a sequence of infinite loop spaces and infinite loop maps and thus its rational Leray--Serre spectral sequence has a product structure. Its rational cohomology is easy to compute and $H^* (\loopinf{} F; \bQ)$ is isomorphic to the algebra showing up in the left hand side of Theorem \ref{main-result} (in a stable range). In addition, from this argument we may conclude that the rational Leray--Serre spectral sequence of $B \Diff_\partial(W_{g,1}
^{2n}) \to B \Gamma (W_{g,1}^{2n})$ collapses in a stable range of degrees.

What is missing is the product structure on the Leray--Serre spectral sequence for the top fibration sequence, and this is where the other ingredients enter. We call a representation of an arithmetic group $\Gamma$ on a rational vector space $U$ \emph{arithmetic} if $U$ is finite-dimensional and if the representation extends to a holomorphic representation of $G$ on $U \otimes_\bQ \bC$, where $G$ is the ambient complex Lie group of the arithmetic group $\Gamma$. It is a theorem of Borel that for an arithmetic representation $U$, there is an isomorphism
$$H^* (\Gamma;U) \cong H^* (\Gamma;\bQ) \otimes U^{\Gamma}$$
in a range of degrees. If we know that $H^q (B \Tor_{g, 1}^{2n}; \bQ)$ is an arithmetic $\Gamma (W_{g,1}^{2n})$-representation, it follows that the Leray--Serre spectral sequence of the top fibration sequence has a product structure.

How do we prove arithmeticity of this representation? This is done in two steps, the first of which is essentially due to Berglund--Madsen \cite{BM}. In this step, the {block diffeomorphism group} $\blockdiff_\partial(W_{g,1}^{2n})$ is related to surgery theory. More specifically, there is a map
$$\Gamma(W_{g,1}^{2n})\hcoker\blockdiff_\partial(W_{g,1}^{2n}) \lra \map (W_{g,1}^{2n}/\partial W_{g,1}^{2n}, \mathrm{G}/\Ort),$$
which is injective on rational homotopy groups. We show that the induced map on cohomology is one of $\Gamma(W_{g,1}^{2n})$-modules, and as $\mathrm{G}/\Ort$ is an infinite loop space with well-known rational homotopy groups, we find that the cohomology groups $H^q (\Gamma(W_{g,1}^{2n})\hcoker\blockdiff_\partial(W_{g,1}^{2n});\bQ)$ are subquotients of an arithmetic representation, hence arithmetic. (A closer analysis, as carried out in \cite{BM}, gives a lot more information, but for our purpose this is not necessary.) The main tool for this step is the space version of the surgery exact sequence due to Quinn.

The relation between block diffeomorphisms and actual diffeomorphisms is via Morlet's lemma of disjunction. The result is that in the concordance stable range, the map
$$B \Tor_{g, 1}^{2n} \simeq \Gamma(W_{g,1}^{2n})\hcoker\Diff_\partial(W_{g,1}^{2n}) \lra \Gamma(W_{g,1}^{2n})\hcoker\blockdiff_\partial(W_{g,1}^{2n})$$
is a rational homology isomorphism. 
 
\begin{remark}
Let us return briefly to mapping class groups of surfaces and the ordinary Torelli group. One might ask whether a similar argument to that of this paper could potentially work in that case as well.
The argument outlined above shows that \emph{if} one knows that $H^* (\cT_g;\bQ)$ is finite-dimensional and the action of the symplectic group on it is arithmetic, then the answer to the question is positive. However, $H^* (\cT_g;\bQ)$ is in general not finitely generated, as $\cT_2$ is a free group on infinitely many generators (Mess \cite{Mess}). Moreover, for each sufficiently large $g$, the dimension of $H^* (\cT_g; \bQ)$ is known to be infinite (Akita \cite{Aki}); though its homological dimension is finite by Teichm{\"u}ller theory. On the other hand, $H^1 (\cT_g;\bQ)$ is known for $g \geq 3$ by work of Johnson \cite{John}, and it is an arithmetic $\Sp_{2g}(\bZ)$-representation (of degree $3$). It does not seem to be unreasonable to conjecture that $H^q (\cT_g; \bQ)$ is an arithmetic $\Sp_{2g}(\bZ)$-representation of degree $q$ in some range of degrees that grows with $g$.
\end{remark}

\subsection{Structure of the paper}

In Section \ref{section:gmtw-theory-index-theory}, the diagram \eqref{main-diagram} is developed, where the main role is played by the relation between Madsen--Tillmann spectra and the Atiyah--Singer index theorem that has been described in \cite{Eb}. In that section, we also compute the cohomology of the bottom sequence of diagram \eqref{main-diagram}. Section \ref{section:arithmetic-groups} is a survey on several aspects of Borel's work on cohomology of arithmetic groups. The purpose is twofold; Borel's result shows that the right vertical map in the diagram \eqref{main-diagram} is a (rational) homology isomorphism. The second purpose is to discuss the statement that the cohomology with coefficients in an arithmetic representation cuts down to the invariant part. This involves the (purely Lie-algebraic) computation of a constant that has not been made explicit by Borel. In Section \ref{section:rational-homotopy}, we extract from \cite{BM} those results from surgery theory that are necessary to deduce the 
arithmeticity of $H^* (B \Tor_{g, 1}^{2n};\bQ)$ as a $\Gamma(W_{g,1}^{2n})$-representation. We also describe the relation between diffeomorphisms and block diffeomorphisms. In Section \ref{section:final-argument} we put everything together and give the proof of Theorem \ref{main-result}. Finally, in Section \ref{sec:InvThy} we include a brief discussion comparing two methods of computing the ranks of the (isomorphic) vector spaces occurring in Theorem \ref{main-result}, using the work of Galatius and Randal-Williams on the left-hand side (which has been described in detail in Section \ref{sec:MTWTheory}), and using the work of Berglund and Madsen as well as classical invariant theory on the right-hand side.

\subsection*{Acknowledgements}

We are grateful to a number of colleagues for useful conversation, namely Holger Kammeyer (on Lie algebras), Wolfgang L\"uck and in particular Michael Weiss. O.\ Randal-Williams was supported by the Herchel Smith Fund.

\section{Diffeomorphism groups, Thom spectra and index theory}\label{section:gmtw-theory-index-theory}

One of the key results of this paper is the following theorem.

\begin{theorem}\label{mt-spectra-index}
There exists a homotopy-commutative diagram 
\begin{equation}\label{the-main-diagram}
\begin{gathered}
\xymatrix{
B \Diff (W_{g}^{2n},D) \ar[d]^{\alpha_{g}^{2n}} \ar[r]^-{\zeta} & B \Gamma(W_{g}^{2n}) \ar[d]^{\beta_{g}^{2n}}\\
\loopinf{}_0 \MTthetan \ar[r]^-{\symb} & \loopinf{+k_n}_0 \KO.
}
\end{gathered}
\end{equation}
The number $k_n$ equals $0$ if $n$ is even and $2$ if $n$ is odd.
The left vertical map is an integral homology equivalence in degrees $* \leq (g-3)/2$. The bottom horizontal map is injective in rational cohomology and its image (in positive degrees) is the subalgebra generated by the $\kappa_{\cl_i}$. The right vertical map is a rational homology equivalence in degrees $* \leq g-2$.
\end{theorem}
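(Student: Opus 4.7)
The plan has three steps: construct the three non-trivial maps individually, establish homotopy-commutativity as an instance of the Atiyah--Singer family index theorem, and finally verify the three homotopical claims by invoking the theorems of Galatius--Randal-Williams and Borel together with one rational cohomology computation.

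The left vertical map $\alpha_g^{2n}$ is the parametrised Pontryagin--Thom (scanning) map of Galatius--Randal-Williams, associated to the tautological $W_g^{2n}$-bundle over $B\Diff(W_g^{2n},D)$ with the standard $\theta^n$-structure on its vertical tangent bundle; that this is an integral homology equivalence in degrees $* \leq (g-3)/2$ is their main theorem. The right vertical $\beta_g^{2n}$ is built from the fact that $\Gamma(W_g^{2n})$ is an arithmetic subgroup of $\Sp_{2g}(\bR)$ (if $n$ is odd) or $\Ort_{g,g}(\bR)$ (if $n$ is even); passing through the classifying space of a maximal compact subgroup $\U(g)$ or $\Ort(g)\times\Ort(g)$ and composing with a canonical map to $\KO$-theory (which in the $n$ odd case uses Bott periodicity to account for the two-fold loop) yields the map, whose rational cohomological behaviour in the stated range is Borel's stability theorem for arithmetic groups. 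The bottom map $\symb$ is the infinite loop map constructed in \cite{Eb}, coming from a spectrum-level map $\MTthetan \to \Sigma^{-k_n}\KO$ that represents the symbol of the signature operator on oriented $2n$-manifolds with $\theta^n$-structure.

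Homotopy-commutativity of the square is a form of the family index theorem. By construction of $\symb$, the composite $\symb \circ \alpha_g^{2n}$ classifies the family $KO$-theoretic index class of the signature operator on the tautological $W_g^{2n}$-bundle. The composite $\beta_g^{2n} \circ \zeta$ classifies the monodromy representation of $B\Diff(W_g,D)$ on the middle cohomology $H^n(W_g;\bR)$, endowed with its intersection form. Since $W_g^{2n}$ is $(n-1)$-connected, the Hodge bundle of any such family is concentrated in middle degree, so the family signature index depends only on the intersection pairing on the middle cohomology bundle, which is precisely the datum classified by $\beta_g^{2n}\circ\zeta$. The identification of the two classes, and its upgrade to a homotopy between the two composite maps, is the main content of \cite{Eb}.

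Finally I would verify the three listed properties. The first claim is the main theorem of Galatius--Randal-Williams, and the third is Borel's stability theorem in its rational form. For the middle claim, both $\loopinf{}_0\MTthetan$ and $\loopinf{+k_n}_0\KO$ are infinite loop spaces with polynomial rational cohomology, so $\symb^*$ is determined by its effect on indecomposables. By the family index theorem applied to the universal $\theta^n$-bundle, $\symb^*$ sends the rational Pontryagin class generators on $\loopinf{+k_n}_0\KO$ to MMM-classes of the form $\kappa_{\cl_i}$ (with the appropriate index shift dictated by degree), up to non-zero scalars; these classes are algebraically independent in $H^*(\loopinf{}_0\MTthetan;\bQ)$ by the explicit polynomial description of the latter due to Galatius--Madsen--Tillmann, giving injectivity and the stated image. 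The main obstacle I anticipate is carefully tracking the spectrum-level conventions, especially in the $n$ odd case where $\beta_g^{2n}$ incorporates a Bott shift and $\symb$ is twisted by $\Sigma^{-2}$, for which I would rely directly on the detailed framework of \cite{Eb}.
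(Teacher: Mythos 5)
Your plan matches the paper's proof in structure: GRW scanning map for $\alpha_g^{2n}$, maximal-compact factorisation plus $\KO$-theory for $\beta_g^{2n}$, the signature-symbol infinite loop map from \cite{Eb} for $\symb$, commutativity via the family index theorem, and then a cohomology computation with $\cl$-classes. The one place where the proposal has a real gap is in the justification of the commutativity of the square. You write that ``The identification of the two classes, and its upgrade to a homotopy between the two composite maps, is the main content of \cite{Eb}''---but \cite{Eb} supplies only one leg, namely that $\symb\circ\alpha_g^{2n}$ is homotopic to $\ind(D)$. The other leg---that $\beta_g^{2n}\circ\zeta$ is homotopic to $\pm\ind_\bR(D)$---is not in \cite{Eb}; it is a genuinely separate statement that requires a direct argument. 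The paper proves it by hand, by choosing a fibrewise Riemannian metric, using the relation $\langle\omega,\eta\rangle = J(\omega,\star\eta)$ to show that the Hodge star defines the required involution (resp.\ compatible complex structure) on the middle-cohomology bundle, and matching the $(\pm 1)$-eigenbundles (resp.\ $(\pm i)$-eigenbundles) of $\star$ with the eigenbundles used in the definition of $\eta\circ\mu$. Your heuristic (``the family signature index depends only on the intersection pairing on the middle cohomology bundle'') correctly captures the idea, but the actual homotopy would have to be constructed along these lines rather than cited. You should also be aware that in the $n$ odd case the paper's computation shows $\beta_g^{2n}\circ\zeta\simeq -\ind_\bR(D)$, so a sign adjustment is needed to get a strictly homotopy-commutative square; this is harmless (one may negate one of the infinite loop maps) but needs to be tracked when you do the cohomology computation. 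Finally, a minor attribution slip: the polynomial description of $H^*(\loopinf{}_0\MTthetan;\bQ)$ for the $n$-connective cover is from Galatius--Randal-Williams, not GMTW (who treated $\MTSO(2n)$).
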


In this section, we will define all remaining spaces and maps and prove most of the theorem (this is mostly a recollection of known results); in the next section we will explain how to derive from Borel's work that the right vertical map is a rational homology equivalence in the stated degrees.

\subsection{The action on the middle homology}\label{sec:DefiningGamma}

Let $W^{2n}_{g}:= \sharp^{g} (S^n \times S^n)$ be the connected sum of $g$ copies of $S^n \times S^n$, $D \subset W_{g}^{2n}$ be a fixed closed disc, and $W^{2n}_{g,1} := W^{2n}_g \setminus \mathrm{int}(D)$. When the dimension $2n$ is understood, we write $W_{g,1} := W_{g,1}^{2n}$. We assume throughout that $n \geq 3$.

\begin{definition}
The \emph{Torelli group} $\Tor_{g,1}^{2n} \subset \Diff_\partial (W_{g,1}^{2n})$ is the subgroup of all diffeomorphisms that act as the identity on $H_n (W_{g,1}^{2n};\bZ)\cong \bZ^{2g}$.
\end{definition}

The middle-dimensional homology $H_n (W_{g,1};\bZ)\cong H_n (W_g;\bZ)\cong \bZ^{2g}$ carries a wealth of algebraic structure. The \emph{homological intersection pairing} is a $(-1)^n$-symmetric bilinear form $I: H_n (W_{g};\bZ) \otimes H_n(W_{g};\bZ) \to \bZ$, which is non-degenerate by Poincar{\'e} duality. Let us define
\begin{equation*}
\Lambda_n = \begin{cases}
0 & \text{$n$ even}\\
\bZ & \text{$n=1$, $3$, or $7$}\\
2\bZ & \text{else}.
\end{cases}
\end{equation*}
The \emph{quadratic refinement} is a map $q: H_n (W_{g};\bZ) \to \bZ/\Lambda_n$ that satisfies $q(x+y)=q(x)+q(y) + I(x,y) \,\,\mathrm{mod}\,\, \Lambda_n$ and $q(\lambda \cdot x) = \lambda^2 \cdot q(x)$. It was introduced by Wall \cite[p.\ 167f]{Wall1}, and we give a sketch of the definition. By a theorem of Haefliger \cite{Haefliger}, an element $x \in H_n (W^{2n}_g;\bZ)$ may be represented by an embedded sphere as long as $n \geq 3$, and it is unique up to isotopy as long as $n \geq 4$.  The normal bundle of this embedding can be viewed as an element of $\pi_n (B\Ort(n))$, and as $W_g^{2n}$ is stably parallelisable this element lies in the kernel of the stabilisation map $\pi_n (B\Ort(n))\to \pi_n (B\Ort)$, which is canonically identified with $\bZ/\Lambda_n$ (generated by the tangent bundle of $S^n$). Let $q(x) \in \bZ/\Lambda_n$ be the corresponding element (note that the non-uniqueness of an embedded representative of $x$ when $n=3$ does not matter, as $q$ takes values in the trivial group in that case).

Let $\Gamma(W_{g}) $ be the group of $\bZ$-linear automorphisms of $H_n (W_{g};\bZ)$ which preserve the intersection form and the quadratic refinement. There is a basis of $H_n (W_{g};\bZ)$, say $(x_1, \ldots, x_g,y_1,\ldots , y_g)$, such that $I(x_i,x_j)=I(y_i,y_j)=0$, $I(x_i,y_j)=\delta_{i,j}$, and $q(x_i)=q(y_j)=0$. When $n$ is even the quadratic property implies that $2q(x) = I(x,x)$, so $q$ contains no information beyond $I$. When $n$ is 1, 3, or 7 then $q$ takes values in the trivial group so contains no information, but for other odd $n$ the form $q$ contains information which cannot be recovered from $I$. The automorphism groups of these data are thus
\begin{equation*}
\Gamma(W_{g}) = \begin{cases}
\Ort_{g,g}(\bZ) & \text{$n$ even}\\
\Sp_{2g}(\bZ) & \text{$n=1$, $3$, or $7$}\\
\Gamma_g(1,2) & \text{else}.
\end{cases}
\end{equation*}
To explain the notation, let us write $J_{\pm ,g}:= \bigl(\begin{smallmatrix}
0&I\\ \pm I&0 \end{smallmatrix} \bigr)$ and then define, for each commutative ring $R$,
\begin{equation*}
\begin{aligned}
\Ort_{g,g}(R) &:= \{ A \in \GL_{2g} (R) \,\vert\, A^T J_{+,g} A = J_{+,g} \}\\ \Sp_{2g}(R) &:= \{ A \in \GL_{2g} (R) \,\vert\, A^T J_{-,g} A = J_{-,g} \}.
\end{aligned}
\end{equation*}
Finally, $\Gamma_g(1,2)$ is the finite index subgroup of $\Sp_{2g}(\bZ)$ of elements which preserves the quadratic refinement $q$ (this group is commonly considered in the theory of theta functions, from which we borrow the notation $\Gamma_g(1,2)$). 

Each orientation-preserving diffeomorphism of $W_g$ induces an automorphism of $H_n (W_g;\bZ)$ which preserves the intersection form and the quadratic refinement. Thus we obtain a group homomorphism $\Diff^+ (W_g) \to \Gamma (W_g)$.

\begin{proposition}\label{torelli-space--connected}
The composition $\Diff (W_g,D) \to \Diff^+ (W_g) \to \Gamma(W_g)$ is surjective. Thus the classifying space $B \Tor_{g,1}^{2n}$ is weakly equivalent to the homotopy fibre of the induced map $B \Diff (W^{2n}_g,D) \to B \Gamma (W^{2n}_g)$.
\end{proposition}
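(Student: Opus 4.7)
The proposition has two assertions; the second follows formally once the first is established. Indeed, surjectivity of $\Diff(W_g, D) \to \Gamma(W_g)$ yields a short exact sequence of topological groups with discrete quotient
\[
1 \lra \Tor_{g,1}^{2n} \lra \Diff_\partial(W_{g,1}^{2n}) \lra \Gamma(W_g) \lra 1,
\]
in which $\Tor_{g,1}^{2n}$ is a union of path components of $\Diff_\partial(W_{g,1}^{2n})$ (as the preimage of a point in the discrete group $\Gamma(W_g)$). Such an extension automatically produces a fibre sequence $B \Tor_{g,1}^{2n} \to B \Diff_\partial(W_{g,1}^{2n}) \to B\Gamma(W_g)$ on classifying spaces, which identifies $B \Tor_{g,1}^{2n}$ with the homotopy fibre.

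The plan for surjectivity is a two-stage argument. First, I would show that $\pi_0 \Diff(W_g, D) \twoheadrightarrow \pi_0 \Diff^+(W_g)$: given an orientation-preserving diffeomorphism of $W_g$, use connectedness of $W_g$ to isotope it to fix the centre of $D$, and then use the parametrised isotopy extension theorem together with the connectedness of the space of orientation-preserving embeddings $D^{2n} \hookrightarrow W_g$ to isotope it further to the identity on a neighbourhood of $D$. Second, realise a generating set of $\Gamma(W_g)$ by orientation-preserving diffeomorphisms of $W_g$. The main tool here is the generalised Dehn twist: given a primitive class $\alpha \in H_n(W_g;\bZ)$ with $q(\alpha)=0$, Haefliger's embedding theorem (using $n \geq 3$) represents $\alpha$ by an embedded sphere whose normal bundle is trivial precisely because $q(\alpha)=0$, and a Dehn twist supported in a tubular neighbourhood then realises the transvection $x \mapsto x + I(x,\alpha)\alpha$. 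Combining such transvections along the standard basis elements $x_i, y_i$ (each of which has $q=0$) with diffeomorphisms that permute the $g$ connected summands of $\sharp^g(S^n\times S^n)$ and the factor swap of $S^n \times S^n$ (realising $x_i \leftrightarrow y_i$) should suffice.

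The main obstacle is the purely algebraic verification that the geometric generators listed above actually exhaust $\Gamma(W_g)$ in each of its three possible forms ($\Ort_{g,g}(\bZ)$, $\Sp_{2g}(\bZ)$, or $\Gamma_g(1,2)$). This is a question about generators of these arithmetic groups, with particular care required for $\Gamma_g(1,2)$, where one is restricted to transvections along classes that are isotropic for the quadratic refinement. This verification is classical, appearing in Wall's treatment of $(n-1)$-connected $2n$-manifolds; alternatively, one may bypass the enumeration of generators altogether by invoking Kreck's realisation theorem, which directly provides the surjection $\pi_0 \Diff^+(W_g) \twoheadrightarrow \Gamma(W_g)$ for stably parallelisable $(n-1)$-connected $2n$-manifolds.
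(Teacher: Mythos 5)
Your proposal is correct and, with the Kreck fallback you mention at the end, matches the paper's proof: the paper cites Kreck (building on Wall) for $\pi_0\Diff^+(W_g) \twoheadrightarrow \Gamma(W_g)$, and for your stage one uses the ``disc lemma'' (connectedness of $\Emb^+(D, W_g)$) together with the principal bundle $\Diff(W_g,D) \to \Diff^+(W_g) \to \Emb^+(D,W_g)$ and its homotopy exact sequence. Your explicit generating-set route is plausible in outline but would need care --- the factor swap of $S^n\times S^n$ is orientation-reversing when $n$ is odd, and verifying that the listed transvections generate $\Gamma_g(1,2)$ is precisely the delicate point handled by Wall and Kreck --- so the direct citation is the economical route, as you yourself observe.
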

\begin{proof}
Kreck \cite{Kreck} has shown that $\pi_0(\Diff^+ (W_g)) \to \Gamma(W_g)$  is surjective, building on work of Wall \cite{Wall2}. 
Therefore in order to show the first part it is enough to show that $\pi_0 (\Diff (W_g,D) ) \to \pi_0 (\Diff^+ (W_g))$ is surjective. Let $\Emb^+ (D,W_g)$ be the space of orientation-preserving embeddings. By the ``disc lemma'' \cite[Lemma 10.3]{BJ}, $\Emb^+ (D,W_g)$ is connected; but the restriction map $ \Diff^+ (W_g) \to \Emb^+ (D, W_{g}) $ is a principal bundle with group $\Diff (W_g,D)$, therefore the result follows by the long exact homotopy sequence. The second part follows from the first by standard bundle techniques.
\end{proof}

\subsection{Madsen--Tillmann--Weiss theory}\label{sec:MTWTheory}

Let $E \to B$ be a fibre bundle with structural group $\Diff (W_g,D)$ and fibre $W_g$, a structure which we call a ``smooth $(W_g,D)$-bundle''. Such a bundle comes with a bundle map $d: B \times D^{2n} \to E$ which is a fibrewise smooth embedding, and a vertical tangent bundle $T_v E$ with classifying map $\tau_v E : E \to B\Ort(2n)$. The bundle $d^* T_v E$ is identified with the vertical tangent bundle of $B \times D^{2n} \to B$, which is canonically trivialised (using the framing of $D^{2n} \subset \bR^{2n}$). We write $E_0$ for the image of $d$, considered as a subundle of $E$.

Let $\theta^n: B\Ort(2n) \langle n \rangle \to B\Ort(2n)$ be the $n$-connective cover (by which we mean that $\pi_i(B\Ort(2n) \langle n \rangle)=0$ for $i \leq n$ and $\theta^n $ induces an isomorphism on homotopy groups of higher degrees; alternatively the homotopy fibre of $\theta^n $ is $n$-co-connected). As $T_v E\vert_{E_0}$ is canonically trivialised, the restriction $\tau_v E\vert_{E_0}$ has a canonical lift along the fibration $\theta^n$. An easy application of obstruction theory, using that $H^i (E, E_0;\bZ)=0$ for $i \leq n-1$, proves that the given lift of $\tau_v E|_{E_0}$ through $\theta^n$ extends uniquely (up to homotopy) to $E$. 
In the language of \cite{GMTW}, this means that each smooth $(W_g, D)$-bundle has a canonical $\theta^n$-structure. Let $\MTthetan$ denote the Madsen--Tillmann--Weiss spectrum for the tangential structure $\theta^n$ (this is the Thom spectrum of $-(\theta^n)^* \gamma_{2n}$, where $\gamma_{2n} \to B\Ort(2n)$ is the universal vector bundle). There is a map (arising from the Pontrjagin--Thom construction) $\alpha_E: B \to \loopinf{}_{0} \MTthetan$ to the unit component of the infinite loop space. In the universal case, we obtain a map
\begin{equation}\label{grw-map}
\alpha_g: B \Diff (W^{2n}_g,D)\lra \loopinf{}_{0} \MTthetan.
\end{equation}

\begin{theorem}[Galatius, Randal-Williams \cite{GRW1,GRW2}]
The map (\ref{grw-map}) induces an isomorphism in integral homology in degrees $* \leq (g-3)/2$.
\end{theorem}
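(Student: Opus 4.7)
The plan splits naturally into two independent tasks: (a) establishing homological stability, so that the map $B\Diff(W_g^{2n},D) \to B\Diff(W_{g+1}^{2n},D)$ induced by connect-sum with $S^n\times S^n$ is an isomorphism on $H_*(-;\bZ)$ for $* \leq (g-3)/2$; and (b) identifying the resulting stable limit, as a space, with $\loopinf{}_0 \MTthetan$. Once both parts are in place, the theorem follows by comparing $B\Diff(W_g,D)$ with the colimit, since the Pontryagin--Thom map $\alpha$ is compatible with stabilisation.

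For (a), I would follow the method of semi-simplicial resolutions. Build a semi-simplicial space $K_{\bullet}(W_{g,1})$ whose space of $p$-simplices consists of $(p+1)$-tuples of disjoint embeddings of $S^n\times D^n \cup_{S^n\times S^{n-1}} D^{n+1}\times S^{n-1}$ (or an analogous ``core'' piece whose complement stabilises the genus by one), augmented to a point. The key technical input is to show that $\|K_\bullet(W_{g,1})\|$ is $((g-3)/2)$-connected. This is done by comparing the geometric complex to an algebraic complex of partial bases of $H_n(W_{g,1};\bZ)$ with respect to the pair $(I,q)$, which is in turn highly connected by a Charney/Vogtmann type argument for the groups $\Gamma(W_{g,1})$. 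The action of $\Diff_\partial(W_{g,1})$ on $K_\bullet$ yields a spectral sequence converging to $H_*(B\Diff_\partial(W_{g,1}))$ whose $E^1$-columns compute, via shifted moduli spaces and a parametrised isotopy extension argument, the homologies of $B\Diff_\partial(W_{g-p-1,1})$; high-connectivity then drives an inductive improvement of the stability range from slope-$0$ to slope-$1/2$, exactly as in the surface case of Harer.

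For (b), I would use the Galatius--Madsen--Tillmann--Weiss theorem for the cobordism category $\cC_{\theta^n}$ of $(2n-1)$-manifolds and cobordisms equipped with $\theta^n$-structure, which gives $B\cC_{\theta^n} \simeq \loopinf{-1} \MTthetan$. The monoid $\coprod_g B\Diff_\partial(W_{g,1})$ sits inside the loop space $\Omega_{[W_{0,1}]} B\cC_{\theta^n}$ as the component of self-cobordisms of $S^{2n-1}$ of the form $W_{g,1}\times[0,1]$. By applying the group completion theorem to this monoid (in conjunction with the observation that $\pi_0$ is inverted by taking the colimit over $g$), the homology of $\hocolim_g B\Diff_\partial(W_{g,1})$ is identified with the homology of a single component of $\Omega B\cC_{\theta^n}\simeq \loopinf{}\MTthetan$. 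Combined with (a), this gives the stated range.

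The main obstacle is the high-connectivity of the semi-simplicial resolution in (a): every detail of the stability range is controlled by this estimate, and both the combinatorial setup (choosing the right ``core'' piece so that the complement is $W_{g-1,1}$ with the correct $\theta^n$-structure, and handling the issue that in dimension $n=3$ embedded representatives of homology classes are not unique) and the algebraic connectivity argument for the poset of hyperbolic sub-lattices with their quadratic refinement are substantial. Everything else — identifying $E^1$-columns of the resolution spectral sequence, setting up the cobordism category, applying group completion — is then relatively formal given the GMTW theorem.
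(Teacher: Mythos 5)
The paper does not prove this theorem; it is imported verbatim from \cite{GRW1} (homological stability) and \cite{GRW2} (identification of the stable homology), so there is no internal proof to compare your sketch against. With that caveat, your two-step outline --- stability by semi-simplicial resolution, then identification of the homotopy colimit via the cobordism category and group completion --- does capture the overall architecture of the Galatius--Randal-Williams programme, and is broadly faithful to it.

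A few of the details are off. The resolving semi-simplicial space in \cite{GRW1} is built from embeddings of a somewhat different ``core'' piece than the plumbing you describe (roughly, copies of $W_{1,1}$ with a prescribed arc to the boundary, so that the complement of a $0$-simplex is identified with $W_{g-1,1}$), and the slope-$1/2$ connectivity estimate is obtained directly from the connectivity of the associated algebraic complex rather than by a Harer-style inductive slope improvement from slope $0$. More significantly, step (b) is far from ``relatively formal'' once the GMTW theorem is granted: the main technical content of \cite{GRW2} is a parametrised surgery argument that cuts the cobordism category $\cC_{\theta^n}$ down to a subcategory with highly connected objects and morphisms without changing the homotopy type of the classifying space, and only after that reduction can one invoke group completion to identify the component in question with $\loopinf{}_0 \MTthetan$. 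Your sketch glosses over exactly the place where the real difficulty in (b) lies, while correctly flagging the connectivity estimate as the heart of (a).
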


The map $\alpha_E$ has a close connection to the Miller--Morita--Mumford classes. If $p:E \to B$ is an oriented smooth manifold bundle with fibre dimension $2n$ and $c \in H^{k+2n} (B\SO(2n))$ a characteristic class, we obtain a class
$$\kappa_c (E):=p_{!} (c(T_v E)) \in H^{k} (B),$$
called the generalised Miller--Morita--Mumford class corresponding to $c$. There is a map of graded vector spaces
\begin{equation*}\label{cohomology-mttheta2}
H^{*} (B\Ort(2n)\langle n \rangle; \bQ)[-2n] \cong H^*(\MTthetan;\bQ) \overset{\sigma}\lra H^* (\loopinf{}_{0} \MTthetan;\bQ)
\end{equation*}
where the first map is the Thom isomorphism and the second is the cohomology suspension, and we denote the image of a class $c \in H^{k+2n}(B\Ort(2n)\langle n \rangle; \bQ)$ by $\kappa_c \in H^k (\loopinf{}_{0} \MTthetan;\bQ)$. For any smooth $(W_g, D)$-bundle $p : E \to B$ and class $c$ of degree $k+2n > 2n$ we obtain the equation
$$\alpha_E^*(\kappa_c) = \kappa_c(E) \in H^k(B;\bQ).$$

Let us now describe the rational cohomology of $\loopinf{}_{0} \MTthetan$. Recall that the rational cohomology ring of $B\SO(2n)$ has a presentation
$$H^* (B\SO(2n); \bQ) = \bQ[p_1,\ldots,p_n,e]/(e^2 -p_n),$$
where $e \in H^{2n}(B\SO(2n); \bQ)$ is the Euler class and $p_i \in H^{4i}(B\SO(2n); \bQ)$ is the $i$th Pontrjagin class. For our purposes, a different system of algebra generators is more convenient. Let $\cl_i \in H^{4i} (B\SO(2n); \bQ)$ be the $i$th component of the Hirzebruch $\mathcal{L}$-class. Then each $p_i$ can be written as a polynomial in the classes $\cl_1, \ldots, \cl_i$ (this follows from the fact that the coefficient of $p_n$ in $\cl_n$ is nonzero, see \cite[p.\ 230]{MS}), as $p_i = p_i(\cl_1, \ldots, \cl_i)$. Hence we may equally well describe the rational cohomology of $B\SO(2n)$ as
$$H^* (B\SO(2n); \bQ) = \bQ[\cl_1,\ldots,\cl_n,e]/(e^2 -p_n(\cl_1, \ldots, \cl_n)).$$
The cohomology of the $n$-connected cover $B\SO(2n)\langle n \rangle$ of $B\SO(2n)$ may then be described as
\begin{equation*}
H^* (B\SO(2n)\langle n \rangle; \bQ) = \bQ\left[\cl_{\ceil{\frac{n+1}{4}}}, \ldots, \cl_n, e \right]/(e^2 -p_n(0, \ldots, 0, \cl_{\ceil{\frac{n+1}{4}}}, \ldots, \cl_n)).
\end{equation*}
Let $I :=\{ \ceil{\frac{n+1}{4}},\ldots ,n \}$, and for a multiindex $\B{i} \in {\bN_{0}}^{I}$ write $|\B{i}|: =\sum_{j \in I} i_j$ and $w (\B{i}):= 4 \sum_{j \in I} i_j j$. Define elements $\lambda_{\B{i}}$ and $\mu_{\B{i}}$ in $ H^* (\loopinf{}_{0} \MTthetan;\bQ)$ by
\begin{equation*}\label{def-lambda-classes}
\lambda_{\B{i}}:=\kappa_{\prod_{j \in I} \cl_j^{i_j}} \in H^{w (\B{i})-2n} (\loopinf{}_{0} \MTthetan;\bQ) 
\end{equation*}
and 
\begin{equation*}\label{def-mu-classes}
\mu_{\B{i}}:=  \kappa_{e \cdot \prod_{j \in I} \cl_j^{i_j}} \in H^{w (\B{i})} (\loopinf{}_{0} \MTthetan;\bQ).
\end{equation*}
Then the natural map
\begin{equation}\label{eq:cohom-calc}
\bQ[\lambda_{\B{i}},\mu_{\B{j}} \,\,\vert\,\, \B{i},\B{j} \in \bN_{0}^{I}, w (\B{i})>2n, w(\B{j}) >0] \lra H^{*} (\loopinf{}_{0} \MTthetan;\bQ). 
\end{equation}
is an isomorphism of algebras, by \cite[\S 2.5]{GRW2}.

\subsection{Some maps of classifying spaces}

In this subsection, we will define the right vertical maps of diagram \eqref{the-main-diagram}. These are only defined up to homotopy, and they are the compositions
\begin{equation*}
\beta_{g}^{2n}:B \Ort_{g,g}(\bZ) \stackrel{\psi}{\lra} B\Ort_{g,g} (\bR) \stackrel{\mu}{\lra} B\Ort(g) \times B\Ort(g) \stackrel{\eta}{\lra} B\Ort = \loopinf{}_{0} \KO 
\end{equation*}
if $n$ is even and 
\begin{equation*}
\beta_{g}^{2n}:B \Gamma (W^{4n+2}_{g}) \stackrel{\psi}{\lra} B \Sp_{2g}(\bR) \stackrel{\mu}{\lra} B\U(g) \stackrel{\eta}{\lra} \SO /\U = \loopinf{+2}_{0} \KO
\end{equation*}
if $n$ is odd. The maps called $\psi$ are induced by the inclusions of groups. Observe that $\Ort(g) \times \Ort(g) \subset \Ort_{g,g}(\bR)$ and $\U(g) \subset \Sp_{2g}(\bR)$ are maximal compact subgroups and therefore the inclusions induce homotopy equivalences on classifying spaces; the maps $\mu$ are by definition homotopy inverses of these maps. For the last maps, let $\eta: B\Ort(g) \times B\Ort(g) \to B\Ort$ be the difference with respect to the Whitney sum. In the odd case, let $W \to B\U(g)$ be the universal bundle and consider the map $\Delta: B\U(g) \to B\U$ which classifies the virtual bundle $[W] - [\overline{W}]$. There is a nullhomotopy of the composition $B\U(g) \to B\U \to B\SO$, since $[W] - [\overline{W}]$ is canonically trivial as a real virtual vector bundle. So we get a map $\eta: B\U(g) \to \SO/\U$ into the homotopy fibre of the realification map $B\U\to B\SO$.

Let us explain the map $ \mu$ in a little more detail, starting with the orthogonal case. Let $V \to B \Ort_{g,g}(\bR)$ be the universal vector bundle, which comes with a fibrewise symmetric nondegenerate bilinear form $J$. The space $B\Ort(g) \times B\Ort(g)$ can be described as the total space of the fibre bundle associated to $E \Ort_{g,g} (\bR) \to B \Ort_{g,g} (\bR)$ with fibre the (contractible) homogeneous space 
$$\frac{\Ort_{g,g} (\bR)}{\Ort(g) \times \Ort(g)} \cong \{ \sigma \in \GL_{2g}(\bR) \,\vert\, \sigma^2 =1 ; \; \sigma^T J_{+,g} = J_{+,g} \sigma ;  \; J \sigma >0\}.$$
Thus we can describe the map $ \mu$ concretely by the following procedure. Choose an involution $\sigma$ on the universal vector bundle $V \to B\Ort_{g,g} (\bR)$, such that $J(v, \sigma w) = J (\sigma v, w)$ and $J(v, \sigma v)>0$ holds for all $v,w$. This is possible by the contractibility of the homogeneous space. Let $V_{\pm}$ be the $(\pm 1)$-eigenbundle of $\sigma$. The map $\mu$ is a classifying map for the pair $(V_+,V_-)$ of vector bundles on $B \Ort_{g,g} (\bR)$ and $\eta \circ \mu:B\Ort_{g,g} (\bR) \to B\Ort$ represents the $KO$-theory class $[V_+] - [V_-]$.

In the symplectic case, the contractible homogeneous space is 
$$\frac{\Sp_{2g}(\bR)}{\U(g)} \cong \{ \sigma \in \GL_{2g}(\bR) \,\vert\, \sigma^2 =-1 ; \; \sigma^T J_{-,g} =- J_{-,g} \sigma ;  \; J \sigma >0\}.$$
In symplectic linear algebra, this is known as the space of compatible complex structures to the symplectic form $J$. On the universal bundle $V \to B \Sp_{2g}(\bR)$, there is a skew-symmetric form $J$. Pick a compatible complex structure $\sigma$ on $V$ (i.e. $\sigma^2=-1$, $J(\sigma v,w)=-J(v,\sigma w)$ and $J(v, \sigma v )>0$). Then $(V, \sigma)$ is a complex vector bundle ($\sigma$ is multiplication by $i$). The classifying map of this vector bundle gives the homotopy equivalence $\mu: B \Sp_{2g} (\bR) \to B\U(g)$. 

\begin{remark}\label{remark-beta-odd}\mbox{}
\begin{enumerate}[(i)]
\item The composition $B\U(g) \stackrel{\eta}{\to} \SO/\U \to B\U$ of $\eta$ with the classifying map of the $\U$-bundle $\SO \to \SO/\U$ is $\Delta$.
\item It is useful to have a slightly different description of $\eta \circ \mu$ in the symplectic case: pick a compatible complex structure $\sigma$ on the universal bundle $V \to B \Sp_{2g}(\bR)$, and consider the virtual complex bundle $[\Eig (\sigma \otimes \bC,i)] - [\Eig (\sigma \otimes \bC,-i)]$, which is canonically trivial as a real virtual bundle.
\end{enumerate}
\end{remark}

\subsection{Connection to index theory}

We will now describe the composition 
\begin{equation*}
B \Diff (W_{g}^{2n}, D) \lra B \Gamma(W_{g}^{2n}) \stackrel{\beta_{g}^{2n}}{\lra} \loopinf{+k_n}_{0} \KO 
\end{equation*}
in index-theoretic terms. The Atiyah--Singer family index theorem will then prove the homotopy-commutativity of the diagram (\ref{the-main-diagram}) (and in particular, provide the last map). Let us recall the signature operator, following \cite[\S 6]{ASIII}. 

Let $M^{2n}$ be a closed oriented Riemannian manifold. Let $\bK$ be either $\bR$ or $\bC$ and $\cA^p (M;\bK)$ be the space of $\bK$-valued $p$-forms. 
Let $\star: \cA^p (M;\bR)\to \cA^{2n-p}(M;\bR)$ be the Hodge star operator, which is an operator of order $0$. An inner product on $\cA^* (M;\bR)$ is given by $\langle \omega, \eta\rangle  := \int_M  \omega \wedge \star \eta$. 
We extend the Hodge star to a $\bC$-linear operator on $\cA^* (M; \bC)$ and let $\kappa: \cA^p (M;\bC)\to \cA^p(M;\bC)$ be the complex conjugation $\omega \mapsto \bar{\omega}$. The inner product is extended in the standard way to $\cA^* (M; \bC)$.
Consider $D=d+d^*:\cA^* (M;\bK)\to \cA^* (M;\bK)$, a formally self-adjoint elliptic differential operator. On the Sobolev completion of $\cA^* (M;\bK)$, $D$ induces a Fredholm operator, also denoted $D$. The kernel of $D$ is the space $\Harm^* (M;\bK)=\bigoplus_{p=0}^{2n} \Harm^p (M;\bK)$ of $\bK$-valued harmonic forms on $M$, which by the Hodge theorem can be identified with $H^* (M; \bK)$. 
The \emph{cohomological intersection pairing} on $\Harm^* (M; \bK)$ is given by $J(\omega,\eta)=\int_M \omega \wedge \eta$. Note the relation (for $\bR$-valued forms)
\begin{equation}\label{cohomological-intersection-inner-product}
\langle \omega, \eta \rangle = J (\omega, \star \eta).
\end{equation}

Now define $\tau: \cA^p (M;\bC)\to \cA^{2n-p}(M;\bC)$ by $\tau: =i^{p(p-1)+n}\star$. The operators $D$ and $\tau$ are $\bC$-linear, while $\kappa$ is $\bC$-antilinear and there are the following relations
\begin{equation}\label{relations-d-tau-kappa}
\tau^2 = 1; \; \kappa^2 = 1; \; \kappa \tau=(-1)^n\tau \kappa; \; D \kappa=\kappa D; \; D \tau = - \tau D 
\end{equation}
between these maps (the third equation says that $\tau$ is real if $n$ is even and imaginary if $n$ is odd, the fourth equation says that $D$ is real). Denote by $\cA^{\pm}(M;\bC)$ be $(\pm 1)$-eigenspace of $\tau$ (as $\tau$ is tensorial, this is indeed the space of sections of a vector bundle).
The operator $D$ restricts to $D^{\pm}: \cA^{*} (M;\bC)^{\pm} \to \cA^{*} (M;\bC)^{\mp}$, and the operators $D^+$ and $D^-$ are mutually adjoint Fredholm operators. The kernel of $D^{\pm}$ is the $(\pm 1)$-eigenspace of the action of $\tau$ on $\Harm^*(M;\bC)$.
We consider the index $\ind (D)=[\ker (D^+)] - [\ker (D^{-})]\in K^0 (*)=\bZ$. 
If $n$ is odd, then $\kappa$ gives a $\bC$-antilinear isomorphism 
\begin{equation*}
\ker (D^+) \to \ker (D^-), 
\end{equation*}
and hence $\ind(D)=0$. If $n$ is even, $\ind(D) = \sign(M)$ is the signature of $M$. 

These structures admit a generalisation to fibre bundles. Let $\pi:E \to B$ be a smooth oriented $M$-bundle (on a paracompact base space). The fibres are denoted $E_b=\pi^{-1}(b)$. Pick a fibrewise Riemannian metric on $E$: the operators $D_b$ on the manifolds $E_b$ assemble to a family $\{D_b\}_{b}$ of elliptic operators, which gives a family of Fredholm operators (between suitable Hilbert bundles over $B$). 
Because the dimension of the kernel of $D_b$ does not depend on $b$, the union $\Harm^* (\pi;\bK):=\cup_{b \in B} \Harm^* (E_b;\bK) \to B$ is a finite-dimensional $\bK$-vector bundle (it is isomorphic to the bundle over $B$ whose fibre over $b$ is the $\bK$-cohomology of $E_b$). The operators $\tau_b$ and $\kappa_b$ on $\Harm^* (E_b; \bC)$ yield bundle maps $\tau$ and $\kappa$ of $\Harm^* (\pi; \bC)$, satisfying the relations \eqref{relations-d-tau-kappa}. The $(\pm 1)$-eigenbundles of $\tau$ are denoted $\Harm^{\pm} (\pi;\bC)$, and $D$ . The family index of the signature operator $D$ is the formal difference 
$$\ind_{\bC}(D):=[\Harm^+(\pi;\bC)] - [\Harm^- (\pi;\bC)] \in K^0 (B).$$
Depending on the parity of $n$, there is a refinement of this index to an index in real $K$-theory.

If $n=2m$ is even, the operator $\tau$ is real and $\Harm^{\pm} (\pi; \bC) = \Harm^{\pm}(\pi; \bR)\otimes \bC$. Thus the index $\ind (D)$ is the image of the element 
$$\ind_{\bR}(D):=[\Harm^{+}(\pi; \bR)]-[\Harm^{-}(\pi; \bR)] \in KO^0 (B)$$
under the complexification map $KO^0 (B) \to K^0 (B)$. If $M= W^{4m}_{g}$, we obtain a map 
\begin{equation}\label{universalindex-evencase}
\ind_{\bR} (D):B \Diff(W^{4m}_{g},D^{4m}) \to \{0\} \times B\Ort \subset \bZ \times B\Ort,
\end{equation}
which lands in the zero component since the signature of $W^{4m}_{g}$ is zero.

\begin{proposition}
The map $\ind_{\bR} (D)$ in \eqref{universalindex-evencase} is homotopic to the composition 
$$B \Diff (W_{g}^{4m}) \stackrel{\zeta}{\lra} B\Ort_{g,g} (\bZ) \stackrel{\beta^{4m}_{g}}{\lra} B\Ort.$$
\end{proposition}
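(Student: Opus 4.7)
The plan is to identify $\beta_g^{4m}\circ\zeta$ and $\ind_{\bR}(D)$ as classifying maps of the same virtual real vector bundle on $B\Diff(W_g^{4m},D)$, namely the index of the family signature operator.

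First I would reduce the family index to its middle-degree contribution. The cohomology of $W_g^{4m}$ vanishes except in degrees $0,2m,4m$, and the line bundles $\Harm^0(\pi;\bR)$ and $\Harm^{4m}(\pi;\bR)$ are canonically trivial, globally trivialised by the constant $1$ and by a fibrewise volume form coming from the preferred orientation (which is preserved by the structure group $\Diff(W_g,D)$). The Hodge star exchanges these two trivial lines, so its $\pm 1$-eigenlines in $\Harm^0\oplus\Harm^{4m}$ are themselves trivial and cancel in the virtual bundle $[\Harm^+]-[\Harm^-]$. In middle degree one computes $\tau = i^{2m(2m-1)+2m}\star = i^{4m^2}\star = \star$, so that
\[
\ind_{\bR}(D) \;=\; [\Harm^{2m,+}(\pi;\bR)] - [\Harm^{2m,-}(\pi;\bR)] \in KO^0(B),
\]
where $\Harm^{2m,\pm}$ are the $\pm 1$-eigenbundles of $\star$ on middle-degree harmonic forms.

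Next I would identify $\Harm^{2m}(\pi;\bR)$ together with its fibrewise intersection form $J$ as the pullback of the universal symmetric bilinear bundle on $B\Ort_{g,g}(\bR)$ along $\psi\circ\zeta$. By Hodge theory, $\Harm^{2m}(\pi;\bR)\cong H^{2m}(\pi;\bR)$; and the $\bZ$-local system $H^{2m}(\pi;\bZ)$ together with its intersection form is, by the very definition of $\Gamma(W_g)=\Ort_{g,g}(\bZ)$, classified by $\zeta$. Extending scalars to $\bR$ gives the claim.

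Third, I would match the Hodge star with the universal involution $\sigma$ used to define $\mu$. The identity $\langle\omega,\eta\rangle=J(\omega,\star\eta)$ recalled in the excerpt, together with positive-definiteness of $\langle\cdot,\cdot\rangle$ and symmetry of $J$ in middle degree, shows that $\star$ is a fibrewise endomorphism of $H^{2m}(\pi;\bR)$ satisfying $\star^2=1$, $J(\star v,w)=J(v,\star w)$, and $J(v,\star v)>0$ for $v\neq 0$. These are precisely the defining conditions for a section of the bundle of involutions used to construct $\mu$, and any two such sections yield homotopic classifying maps because the fibre $\Ort_{g,g}(\bR)/(\Ort(g)\times\Ort(g))$ is contractible. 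Hence the pair $(\Harm^{2m,+},\Harm^{2m,-})$ is classified by $\mu\circ\psi\circ\zeta$, and postcomposing with $\eta$ (which takes the formal difference in $KO$-theory) gives
\[
\beta_g^{4m}\circ\zeta \;=\; \eta\circ\mu\circ\psi\circ\zeta \;=\; [\Harm^{2m,+}] - [\Harm^{2m,-}] \;=\; \ind_{\bR}(D).
\]
The main obstacle I foresee is the bookkeeping of sign conventions: verifying that the $\Harm^0\oplus\Harm^{4m}$ contribution really cancels, and that the Hodge $\star$ enters the universal picture with the correct positivity convention relative to $J$. In each case the contractibility of the relevant space of auxiliary choices (Riemannian metrics, compatible involutions) means that sign mismatches at the level of representatives cannot affect the resulting homotopy class.
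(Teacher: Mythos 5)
Your proposal is correct and takes essentially the same route as the paper: identify $\psi\circ\zeta$ as a classifying map for $\Harm^{2m}(\pi;\bR)$ with its intersection form, verify via $\langle\omega,\eta\rangle=J(\omega,\star\eta)$ that the Hodge star on middle-degree harmonic forms is an admissible involution in the sense used to define $\mu$, note that $\tau=\star$ in middle degree, and observe that the degree-$0$ and degree-$4m$ contributions to the index vanish. The only cosmetic difference is that you handle the degree-$0$ and degree-$4m$ cancellation at the outset and with slightly more detail (observing that $\tau$ interchanges two trivial lines), whereas the paper dispatches it in one sentence at the end; this is a reordering, not a different argument.
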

\begin{proof}
Pick a fibrewise Riemannian metric on the universal $W_{g}^{4m}$-bundle $\pi$ over $B \Diff (W_{g}^{4m})$. The map $\psi \circ \zeta : B \Diff (W_{g}^{4m}) \lra B\Ort_{g,g} (\bZ) \to B \Ort_{g,g} (\bR)$ is a classifying map for the bundle $\Harm^{2m} (\pi; \bR)$, equipped with the bilinear form $J$ given by the cohomological intersection pairing. Note that $\star$ is an involution, $J(-, \star -)$ is a scalar product by \ref{cohomological-intersection-inner-product} and 
$$J (\star \omega , \eta) = J (\eta, \star \omega )=\langle \eta, \omega \rangle = \langle \omega, \eta  \rangle = J(\omega, \star \eta).$$
By the recipe given in the last section, the composition $\beta^{4m}_{g}\circ \zeta$
represents the $KO$-theory class $[\Eig (\star,+1)] - [\Eig (\star,-1)]=  [\Harm^{+,2m} (\pi; \bR) ] -[\Harm^{-,2m} (\pi; \bR) ]$, the last equation holds since in the middle dimension $\tau=\star $. 

Since $W^{4m}_{g}$ is $(2m-1)$-connected, harmonic forms only exist in degrees $0, 2m$ and $4m$. The contribution from $0$ and $4m$-forms to the index is zero, and this finishes the proof.
\end{proof}

Now we turn to the case $n = 2m+1$. In that case, $\kappa $ defines a $\bC$-antilinear isomorphism $\ker (D^+) \to \ker (D^-)$. Therefore $\ind_{\bC} (D)$ maps to $0$ under the realification map $K^0 \to KO^0$. Therefore we get a map 
\begin{equation}\label{universalindex-oddcase}
\ind_{\bR} (D):B \Diff(W^{4m}_{g},D^{4m+2}) \lra \SO/\U
\end{equation}
into (the connected component of) the homotopy fibre of $B\U \to B\SO$, and the composition of $\ind_{\bR}(D)$ with $\SO/\U \to B\U$ gives $\ind(D)_{\bC}$.

\begin{proposition}
The map $-\ind_{\bR} (D)$ in \eqref{universalindex-oddcase} is homotopic to the composition 
\[
B \Diff (W_{g}^{4m+2}) \stackrel{\zeta}{\lra} B\Gamma (W^{4m+2}_{g} )  \stackrel{\beta^{4m}_{g}}{\lra} \SO/\U.
\]
\end{proposition}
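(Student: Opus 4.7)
The plan is to parallel the proof of the even case. First I pick a fibrewise Riemannian metric on the universal $W_{g}^{4m+2}$-bundle $\pi$, so that $\psi \circ \zeta$ classifies the real vector bundle $\Harm^{n}(\pi;\bR)$ (with $n := 2m+1$) equipped with the cohomological intersection form $J$, which is skew-symmetric because $n$ is odd. On middle-dimensional forms $\star^{2} = (-1)^{n} = -1$, and using $\langle\omega,\eta\rangle = J(\omega,\star\eta)$ together with the skew-symmetry of $J$ one checks
\[
J(\star\omega,\eta) = -J(\omega,\star\eta), \qquad J(\omega,\star\omega) = \langle\omega,\omega\rangle > 0.
\]
Thus $\star$ furnishes a compatible complex structure on $(\Harm^{n}(\pi;\bR),J)$ in the sense of the description of $\mu$ given above.

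By Remark \ref{remark-beta-odd}(ii), the composition $\eta\circ\mu\circ\psi\circ\zeta$ therefore classifies the virtual complex bundle $[\Eig(\star \otimes \bC,i)] - [\Eig(\star \otimes \bC,-i)]$ on $\Harm^{n}(\pi;\bR)\otimes\bC = \Harm^{n}(\pi;\bC)$, together with the canonical real trivialisation given by coefficient-wise complex conjugation. The formula $\tau = i^{p(p-1)+n}\star$ specialises in middle degree to $\tau = i^{n^{2}}\star = i\star$ (using $n^{2} \equiv 1 \pmod 4$ for $n$ odd), so $\Eig(\star\otimes\bC,i) = \Harm^{-,n}(\pi;\bC)$ and $\Eig(\star\otimes\bC,-i) = \Harm^{+,n}(\pi;\bC)$. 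Since $W^{4m+2}_{g}$ is $2m$-connected the only nonzero harmonic forms live in degrees $0$, $n$ and $2n$; in degrees $0$ and $2n$ the operator $\tau$ swaps the two one-dimensional summands, contributing cancelling terms to the index. Hence the underlying $K$-theory class of $\beta^{4m+2}_{g}\circ\zeta$ equals $[\Harm^{-,n}(\pi;\bC)] - [\Harm^{+,n}(\pi;\bC)] = -\ind_{\bC}(D)$, which is also the underlying class of $-\ind_{\bR}(D)$.

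What remains is to identify the two lifts through $\SO/\U \to BU$. An element of $[B,\SO/\U]$ is a class in $K^{0}(B)$ together with a trivialisation of its underlying real class. The lift defining $\eta$ uses the canonical identification $W_{\bR} \cong \overline{W}_{\bR}$; for $W = \Eig(\star\otimes\bC,i)$ this is concretely realised by coefficient-wise complex conjugation on $\Harm^{n}(\pi;\bR)\otimes\bC$, which is exactly the restriction of the operator $\kappa$ to middle degree. The relation $\kappa\tau = -\tau\kappa$ for $n$ odd implies that $\kappa$ restricts to an $\bR$-linear isomorphism $\Harm^{-,n}(\pi;\bC) \to \Harm^{+,n}(\pi;\bC)$, and this is precisely the trivialisation used to define $\ind_{\bR}(D)$ (and hence $-\ind_{\bR}(D)$, as $\kappa^{2} = 1$). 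Therefore the two lifts agree and $\beta^{4m+2}_{g}\circ\zeta \simeq -\ind_{\bR}(D)$.

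The main obstacle is the last paragraph: matching the two specific trivialisations rather than just the underlying complex virtual bundles. The key observation is that the coefficient-wise complex conjugation on $\Harm^{n}(\pi;\bR)\otimes\bC$ appearing in Remark \ref{remark-beta-odd}(ii) coincides with the antilinear involution $\kappa$ on complex-valued harmonic forms. Once this is noted, the rest of the calculation is a direct analogue of the even-dimensional case, the only substantive difference being the sign that produces the $-\ind_{\bR}(D)$ in the statement (arising from the identification $\Eig(\star,i) = \Harm^{-,n}$ rather than $\Harm^{+,n}$).
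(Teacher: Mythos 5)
Your proof is correct and follows the same route as the paper: pick a fibrewise metric so $\psi\circ\zeta$ classifies $(\Harm^{2m+1}(\pi;\bR),J)$, verify that $\star$ is a compatible complex structure, apply Remark \ref{remark-beta-odd}(ii), translate the $\pm i$-eigenspaces of $\star$ into the $\mp 1$-eigenspaces of $\tau$ via $\tau = i\star$, and use $(2m)$-connectivity to discard the extremal degrees. The one place you go beyond the paper is the final paragraph, where you verify that the nullhomotopy of the realification implicit in $\eta$ (complex conjugation on $V\otimes\bC$, identifying $W_\bR\cong\overline{W}_\bR$) coincides with the nullhomotopy built into $\ind_\bR(D)$ (the operator $\kappa$); the paper simply asserts that the identification of virtual complex bundles "gives $-\ind_\bR(D)$ when mapped to $\SO/\U$", so this is a genuine and correct filling-in of a detail.
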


\begin{proof}
This is analogous to the previous proof. Pick a fibrewise Riemannian metric on the universal $W_{g}^{4m+2}$-bundle $\pi$ over $B \Diff (W_{g}^{4m+2})$. The map $B \Diff (W_{g}^{4m+2}) \stackrel{\psi\circ \zeta}{\lra}  B \Sp_{2g} (\bR)$ is a classifying map for the bundle $\Harm^{2m+1} (\pi; \bR)$, equipped with the bilinear form $J$ given by the cohomological intersection pairing. Note that $\star^2 =-1$ and $\tau=i \star$ in the middle dimension, $J(-, \star -)$ is a scalar product by \ref{cohomological-intersection-inner-product} and $J(\omega,\star \eta) = - J(\star \omega,\eta)$ and so $\star$ is a compatible complex structure. So according to the recipe from Remark \ref{remark-beta-odd}, we have to consider the virtual bundle $[\Eig (\star,+i)] - [\Eig (\star,-i)] = [\Eig (\tau,-1)] - [\Eig (\tau,+1)]$, which gives $- \ind_{\bR}(D)$ (when mapped to the homotopy fibre $\SO/\U$).
Since $W^{4m+2}_{g}$ is $2m$-connected, harmonic forms only exist in degrees $0, 2m+1$ and $4m+2$. The contribution from $0$ and $4m+2$-forms to the index is zero, and this finishes the proof.
\end{proof}

\begin{remark}
The choice of the targets of the index maps look a bit artificial. The canonical expectation is that there is an index map $B \Diff^+ (M^{2n}) \to \loopinf{+2n} \KO$ for all $n$. The problem is that the signature operator does not define a $KO$-orientation of any oriented manifold. The whole problem is a $2$-primary problem, since after inverting $2$, the $KO$-spectrum becomes $4$-periodic. As the present paper is ultimately only about rational invariants, we have decided to ignore this issue and stick to the artificial construction.
\end{remark}

The signature operator is a universal operator on oriented manifolds in the sense of \cite{Eb}. Thus an exercise with the Atiyah--Singer family index theorem (which has been solved in loc.\ cit.) shows that the index of the signature operator factors through the Madsen--Tillmann--Weiss spectrum. More precisely, write $k_n=0$ if $n$ is even and $k_n=2$ if $n$ is odd. Then there exists an infinite loop map $\symb:\loopinf{} \MTSO (2n)\to \loopinf{+k_n}\KO$, such that the composition
\begin{equation}\label{indexsequence}
B \Diff^+ (M) \stackrel{MTW}{\lra} \loopinf{} \MTSO(2n) \stackrel{\symb}{\lra} \loopinf{+k_n} \KO
\end{equation}
is homotopic to the index of the signature operator (the map in \ref{indexsequence} is the topological index). Abusing notation slightly, we denote the composition of $\symb$ with the natural map $\loopinf{}\MTthetan \to \loopinf{}\MTSO(2n)$ by $\symb$ too, and we get that
$$
B \Diff (W_{g},D) \stackrel{\eqref{grw-map}}{\lra} \loopinf{}_{0} \MTthetan  \stackrel{\symb}{\lra} \loopinf{+k_n}_{0} \KO$$
is homotopic to the the index of the signature operator, i.e. to $\ind(D)$. This finishes the construction of the diagram (\ref{the-main-diagram}).

The effect of the map $\ind(D)$ in rational cohomology is well-understood. Recall that there is a slight incompatibility of Hirzebruch's original classes with those showing up in index theory. More specifically, Hirzebruch considered the class $\cl$ which is associated with the formal power series of $\frac{x}{\tanh(x)}$ through the formalism of multiplicative sequences. In index theory, one uses the class $\widehat{\cl}$ associated with $\frac{x/2}{\tanh(x/2)}$. It is clear that $\cl_i = 2^{2i} \cdot \widehat{\cl}_i$.

If $n=2m$ is even, consider the complexification map $\loopinf{} \KO \to \loopinf{} \KU$. The image of the Chern character class $\ch_{2i+1} \in H^{2i+2} (B\U)$ in $H^* (B\Ort;\bQ)$ is zero and $(-1)^i \ch_{2i}$ maps (by definition) to the Pontrjagin character $\ph_i$. It is well-known that the cohomology of $\loopinf{}_{0} \KO \simeq B\Ort$ is the polynomial algebra $\bQ[\ph_1, \ldots ]$. Moreover, $\ind(D)^*(\ph_i) = (-\tfrac{1}{4})^i \kappa_{\cl_{i+m}}$. This computation can be found in \cite[Corollary III.15.4]{LM}  (see also p.\ 232 loc.\ cit.). Thus in positive degrees the map $ \symb^*: H^* (\loopinf{}_{0} \KO; \bQ) \to H^{*} (\loopinf{}_{0} \MTthetan; \bQ)$ is injective and its image is the subalgebra generated by the $\kappa_{\cl_i}$.

If $n=2m+1$ is odd, the composition $B \Diff^+ (W_{g}) \to \SO /\U \to B\U$ is the complex family index and the same computation yields that it pulls back $\ch_{2i} $ to zero and $\ch_{2i-1}$ to $(\tfrac{1}{2})^{2i-1}\kappa_{\cl_{i+m}}$. On the other hand, let $\qh_j \in H^{2j-2}(\SO /\U)$ be the pullback of $\ch_{2j-1}$ under $\SO /\U \to B\U$. It is well-known that $\bQ[\qh_1,\qh_3, \ldots ] \cong H^* (\SO/\U;\bQ)$. In other words, the map $ \symb^*: H^* (\loopinf{+2}_{0} \KO; \bQ) \to H^{*} (\loopinf{}_{0} \MTthetan; \bQ)$ is injective and (in positive degrees) its image is the subalgebra generated by the $\kappa_{\cl_i}$.

\section{Borel's result on the cohomology of arithmetic groups}\label{section:arithmetic-groups}

In this section, we discuss those aspects of A. Borel's work on the cohomology of arithmetic groups that are relevant for us. 
This serves a twofold purpose. The first purpose is to show that the right  vertical map in Theorem \ref{mt-spectra-index} is a rational cohomology equivalence in a stable range. This is accomplished by Theorem \ref{cohomology-arithmetic-group-constant-coeffs} below. The second goal is to discuss a less well-known result stating that $H^* (\Gamma(W_g);V)=0$ in a range of degrees for certain rational $\Gamma(W_g)$-modules $V$.

\begin{theorem}[Borel]\label{cohomology-arithmetic-group-constant-coeffs}\mbox{}
\begin{enumerate}[(i)]
\item $(\beta_{g}^{4m})^{*}:H^q (B\Ort; \bQ) \to H^q (B \Ort_{g,g}(\bZ); \bQ)$ is an isomorphism for $q \leq g-2 $.
\item $(\beta_{g}^{4m+2})^{*}:H^q (\SO/\U; \bQ) \to H^q (B\Gamma(W_g^{4m+2}); \bQ)$ is an isomorphism for $q \leq g-1$.
\end{enumerate}
\end{theorem}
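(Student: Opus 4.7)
The plan is to derive both statements from Borel's classical theorem on the stable rational cohomology of arithmetic groups. In case (i), $\Ort_{g,g}(\bZ)$ is an arithmetic lattice in the semisimple real Lie group $\Ort_{g,g}(\bR)$ with maximal compact subgroup $\Ort(g) \times \Ort(g)$, and in case (ii) the group $\Gamma(W_g^{4m+2})$ is either $\Sp_{2g}(\bZ)$ or the finite-index subgroup $\Gamma_g(1,2) \subset \Sp_{2g}(\bZ)$, each an arithmetic lattice in $\Sp_{2g}(\bR)$ with maximal compact $\U(g)$. In both cases the $\bR$-rank of the ambient Lie group equals $g$, and Borel's argument is insensitive to passing to finite-index subgroups, so the $\Gamma_g(1,2)$ case requires no extra work beyond the $\Sp_{2g}(\bZ)$ case.

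First I would invoke Borel's theorem \cite{Borel1}: for such an arithmetic lattice $\Gamma \subset G(\bR)$ with maximal compact $K$, the natural map $H^*(X_u;\bR) \to H^*(B\Gamma;\bR)$, where $X_u$ is the compact dual of the Riemannian symmetric space $G/K$, is an isomorphism in a range of degrees that grows linearly in the $\bR$-rank of $G$. The compact duals for the two cases are the real Grassmannian $\Ort(2g)/(\Ort(g)\times\Ort(g))$ and the space $\Sp(g)/\U(g)$. Standard stability of their rational cohomology (via the classifying maps of their tautological bundles, which stabilise to $B\Ort$ and to $\SO/\U$ respectively) identifies $H^*(X_u;\bQ)$ with $H^*(B\Ort;\bQ)$ in degrees $\leq g-2$ in case (i), and with $H^*(\SO/\U;\bQ)$ in degrees $\leq g-1$ in case (ii). To finish the proof, I would verify that $\beta_g^{2n}$, as defined in the previous subsection via the $(\pm 1)$-eigenbundle decomposition or a compatible complex structure on the universal bundle over $B\Gamma$, agrees up to homotopy with Borel's comparison map composed with this stabilisation from the compact dual. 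This is a direct Chern--Weil calculation, matching the Pontryagin/Chern classes of the tautological bundles on $X_u$ with the characteristic classes of $V_{\pm}$ (resp.\ $(V,\sigma)$) appearing in the bundle-theoretic construction of $\beta_g^{2n}$.

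The main obstacle is extracting Borel's stability range in the sharp linear form $q \leq g-2$ and $q \leq g-1$. Borel's original theorem gives the stable range somewhat implicitly, depending on rank-dependent constants arising from his vanishing theorem for square-integrable cohomology and from the spectral sequence relating relative $(\fg,K)$-cohomology to $H^*(\Gamma;\bR)$. Making these constants explicit for the two classical families $\Ort_{g,g}$ and $\Sp_{2g}$ is precisely what the remainder of this section of the paper carries out; once the linear range is in hand, the comparison of $\beta_g^{2n}$ with Borel's map is formal.
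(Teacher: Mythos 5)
Your outline follows the same route as the paper: express the Borel--Matsushima map as pullback from the compact dual $L/K$, identify that with a stabilisation into $B\Ort$ (resp.\ $\SO/\U$), and quote Borel for the range. Two points are glossed over that the paper handles explicitly and that are not purely formal.

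First, and most substantively, Borel's theorem (as the paper states it in Theorem~\ref{borelstheorem}, and as it is usually cited) applies to arithmetic subgroups of a \emph{connected} semisimple algebraic group defined over $\bZ$. The group $\Ort_{g,g}$ is not connected, and indeed $\Ort_{g,g}(\bR)$ acts nontrivially on the space of invariant forms $I_G$. Your proposal treats $\Ort_{g,g}(\bZ) \subset \Ort_{g,g}(\bR)$ as if Borel's theorem applied directly. The paper instead first proves the result for $\SO_{g,g}(\bZ)$, then recovers $\Ort_{g,g}(\bZ)$ by the transfer for the index-$2$ covering $B\SO_{g,g}(\bZ) \to B\Ort_{g,g}(\bZ)$, which gives injectivity, together with the surjectivity coming from the $\SO_{g,g}$ computation. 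This also yields Corollary~\ref{cor:TrivGaloisAction} (triviality of the Galois $\bZ/2$-action in the stable range), which is reused later in Proposition~\ref{prop:corollary-to-borel}. You should be aware that passing from $\SO$ to $\Ort$ is not free here.

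Second, the step you call ``a direct Chern--Weil calculation'' -- identifying $\beta_g^{2n}$ with the Borel--Matsushima comparison map composed with the stabilisation from the compact dual -- is precisely the content of the paper's Proposition~\ref{borel-mathsushima3}, which the paper attributes to Borel~\cite{Borel4} rather than rederiving. It requires the chain of identifications $I_G \cong H^*(\fg,\fk;\bC) \cong H^*(\fl,\fk;\bC) \cong H^*(L/K;\bC)$ (the ``Weyl unitary trick'' plus the van~Est spectral sequence for compact $L$), together with compatibility with the classifying maps $\lambda$, $\mu$, $\psi$. This is a genuine, citable input rather than an ad hoc Chern--Weil check. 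Your identification of the ``main obstacle'' as the explicit linear stable range is correct; that is exactly what the paper's Claim inside the proof of Theorem~\ref{borelstheorem} (estimating $c(\bG,r)$ via the root system, here with $k=0$) accomplishes, combined with the fact that $\eta\circ\lambda$ is $(g-1)$-connected (orthogonal case) resp.\ a rational isomorphism in degrees $\le g-1$ via the rational homotopy groups (symplectic case).
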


Theorem \ref{cohomology-arithmetic-group-constant-coeffs} is a reformulation of the main result of the classical paper \cite{Borel1}. Later, Borel proved an improved range of degrees (as stated above) and an extension to nontrivial coefficients which we will also use. The extended result is announced in \cite{Borel2} and proved in \cite{Borel3}.
We discuss how to derive Theorem \ref{cohomology-arithmetic-group-constant-coeffs} because we are not aware that it is published as stated.

\subsection{The Borel--Matsushima homomorphism}

Let $G$ be a real semisimple Lie group with finitely many components and finite centre, $K \subset G$ be a maximal compact subgroup, and $\Gamma \subset G$ a discrete subgroup. Let $G_{\bC}$ be a complexification of $G$ and $L \subset G_{\bC}$ be a maximal compact subgroup that contains $K$. The cases relevant to us will be as follows.
\begin{center}
\begin{tabular}{ccccc}
$G$ & $K$ & $\Gamma$ & $G_\bC$ & $L$ \\
\hline
$\Ort_{g,g}(\bR)$ & $\Ort(g) \times \Ort(g)$ & $\Ort_{g,g}(\bZ)$ & $\Ort_{g,g}(\bC)$ & $\Ort(2g)$   \\
$\SO_{g,g}(\bR)$ & $\mathrm{S}(\Ort(g) \times \Ort(g))$ & $\SO_{g,g}(\bZ)$ & $\SO_{g,g}(\bC)$ & $\SO(2g)$   \\
$\Sp_{2g}(\bR)$ & $\U(g)$ & $\Sp_{2g}(\bZ)$ & $\Sp_{2g}(\bC)$ & $\USp(g)$    \\
$\Sp_{2g}(\bR)$ & $\U(g)$ & $\Gamma_g(1,2)$ & $\Sp_{2g}(\bC)$ & $\USp(g)$    \\
\end{tabular}
\end{center}
Here, we denote by $\USp (g)$ the group of $\bH$-linear isometries of $\bH^g$ (this group is often denoted $\Sp (g)$ in compact Lie group or topology texts). Let $\fg$, $\fk$, and $\fl$ be the Lie algebras of $G$, $K$, and $L$ respectively and let $G^0 \subset G$ and $K^0 = K \cap G^0 \subset K$ be the identity components. Let $X=G/K=G^0 / K^0$ be the symmetric space associated with $G$, which is contractible and has a proper action of $\Gamma$, so the quotient $\Gamma \backslash X$ has the rational (or complex) cohomology of $B\Gamma$. Let $I_G \subset \cA^* (X)$ be the subspace of complex-valued differential forms on $X$ invariant under the action of $G^0$. 
It is a well-known result, see e.g. \cite[Prop. 7.4.14]{Nic}, that invariant forms on a symmetric space are closed, so all elements of $I_G$ are closed forms. The group $G$ (and hence $\Gamma$) acts on $I_G$ via its finite quotient $G/G^0$, and there is a canonical homomorphism 
\begin{equation}\label{borelmatsushima}
j^*:I_{G}^{\Gamma} \lra H^*(\Gamma \backslash X; \bR) \cong H^* (B\Gamma;\bR).
\end{equation}

Borel \cite{Borel1} proved that $j^*$ is an isomorphism in small degrees when $G$ is the group of real points of a semisimple algebraic group defined over $\bZ$ and $\Gamma$ is an arithmetic subgroup. We will state these results further below in more detail, but before, we would like to discuss another description of $j^*$ which has a more homotopical flavour (and does not involve algebraic groups), the goal is to prove Proposition \ref{borel-mathsushima3} below.

Let $C^{*}(\fg, \fk) \subset \Lambda^* \fg^*$ be the subcomplex of the Chevalley--Eilenberg complex consisting of the forms $\omega$ with $\iota_x \omega =0$ and $L_x \omega=0$ for all $x \in \fk$, where $\iota_x$is the insertion operator and $L_x$ the Lie algebra action, see \cite[VI, \S 8]{GHV}.
Restriction to the basepoint of $X$ yields an isomorphism $I_G \cong C^* (\fg, \fk)$.
By definition, the cohomology of $C^*(\fg, \fk)$ is the relative Lie algebra cohomology $H^{*}_{\bR} (\fg,\fk; \bC)$ (the subscript indicates that the ground field that Lie algebra cohomology depends on), and because $I_G$ has trivial differential, we get an isomorphism 
\begin{equation}\label{invariant-forms}
I_G \cong H^* (\fg,\fk). 
\end{equation}

\begin{remark}
In the following we assume that $G$ acts \emph{trivially} on $I_G$. The group $\Sp_{2g}(\bR)$ is connected and so in this case there is no problem. In the case $G=\Ort_{g,g}(\bR)$, the index $2$ subgroup $\SO_{g,g}(\bR)$ has two connected components, the unit component is denoted $\SO_{g,g}(\bR)^+$. Since $\Ort_{g,g}$ is an algebraic group, the action on the (finite-dimensional) vector space $I_G$ is algebraic as well. But the group $\SO_{g,g} (\bR)^+ $ acts trivially and so does its Zariski closure, which is $\SO_{g,g}(\bR)$. The whole group $\Ort_{g,g}(\bR)$ does not act trivially, but we will circumvent this problem below by an \emph{ad hoc} argument.
\end{remark}

By complexification, we obtain isomorphisms
\begin{equation}\label{lie-algebra-cohomology}
H^{*}_{\bR} (\fg,\fk; \bC)  \cong H^{*}_{\bC} (\fg\otimes \bC,\fk\otimes \bC; \bC) \cong H^{*}_{\bC} (\fl\otimes \bC,\fk\otimes \bC) \cong H^{*}_{\bR} (\fl,\fk; \bC),
\end{equation}
where we have indicated the ground field in the notation for Lie algebra cohomology; the middle isomorphism is given by an isomorphism $\fg \otimes \bC\cong \fl \otimes \bC$ extending the identity on $\fk \otimes \bC$. Let $H^{*}_{\sm} (L;\bR)$ be the cohomology of the complex of smooth group cochains on $L$. The van Est spectral sequence in this case is (cf.\ \cite[\S III.7.6]{Gui})
\begin{equation*}
E_{2}^{p,q} = H^{q}_{dR} (L/K; \bC) \otimes H^{p}_{\sm} (L;\bC) \Longrightarrow H^{p+q}_{\bR}(\fl,\fk; \bC).
\end{equation*}
Since $L$ is compact, $H^{p}_{sm} (L)=0$ for $p>0$ and we get an isomorphism
\begin{equation}\label{cohomology-homogeneous-space}
H^{*}_{\bR} (\fl,\fk; \bC) \cong H^{*}_{dR} (L/K; \bC).
\end{equation}
Combining the isomorphisms (\ref{cohomology-homogeneous-space}), (\ref{lie-algebra-cohomology}) and (\ref{invariant-forms}), we can rewrite the Borel--Matsushima homomorphism as 
\begin{equation}\label{borel-matsushima2}
\tilde{j}^*:H^{*}(L/K;\bC) \cong H^{*}_{\bR}( \fl;\fk;\bC) \cong I_{G} \stackrel{j^*}{\lra} H^{*}(B\Gamma;\bC).
\end{equation}
We can further reinterprete this. Let $\lambda:L/K \to BK$ be the classifying map of the $K$-principal bundle $L \to L/K$ and $\psi: B\Gamma \to BG$ induced by the inclusion. Moreover, let $\mu: BG \to BK$ be a homotopy inverse to the induced map $BK \to BG$. Composing $H^* (BK)\to H^{*}(L/K)$ with (\ref{borel-matsushima2}), we obtain a diagram
\begin{equation}\label{borelmatsushima-topological}
\begin{gathered}
\xymatrix{
  H^* (BK;\bC) \ar[d]^{\cong}_{\mu^*} \ar[r]^{\lambda^* } & H^*(L/K;\bC) \ar[d]^{\tilde{j}^*} \\
H^* (BG;\bC)  \ar[r]^{\psi^*} & H^* (B \Gamma;\bC).
}
\end{gathered}
\end{equation}

\begin{proposition}\label{borel-mathsushima3}
The diagram \eqref{borelmatsushima-topological} is commutative.
\end{proposition}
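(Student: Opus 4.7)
The plan is to identify both compositions around the square as Chern--Weil constructions applied to principal $K$-bundles, and then use Cartan duality between the dual symmetric spaces $G/K$ and $L/K$ to match the two resulting forms under the Lie-algebraic isomorphisms that define $\tilde j^*$.

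After passing to a finite-index torsion-free subgroup of $\Gamma$ via Selberg's lemma (which is harmless with $\bC$-coefficients), I would model $B\Gamma$ as $\Gamma \backslash G/K$. The composition $\psi \circ \mu \colon B\Gamma \to BK$ then classifies the principal $K$-bundle $\Gamma \backslash G \to \Gamma \backslash G/K$, which carries a canonical connection: on $G$ itself, its connection $1$-form is the $\fk$-component of the Maurer--Cartan form for the Cartan decomposition $\fg = \fk \oplus \mathfrak{p}$; this form is $K$-equivariant and left-invariant, hence descends to the quotient. By Chern--Weil, each invariant polynomial $P \in H^*(BK;\bC)$ is represented on $\Gamma \backslash G/K$ by the form $P(\Omega_\fk)$, which lifts to a $G$-invariant form $\tilde P \in I_G$ satisfying $j^*(\tilde P) = (\psi \circ \mu)^*(P)$. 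Exactly the same construction, applied to the $K$-bundle $L \to L/K$ with its canonical connection coming from the compact-dual decomposition $\fl = \fk \oplus i\mathfrak{p}$, shows that $\lambda^*(P)$ is represented by an $L$-invariant form $P(\Omega'_\fk) \in I_L \cong H^*_{dR}(L/K;\bC)$.

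The heart of the argument is to verify that $\tilde P$ and $P(\Omega'_\fk)$ correspond under the composite isomorphism
\[
H^*_{dR}(L/K;\bC) \cong H^*_\bR(\fl,\fk;\bC) \cong H^*_\bC(\fg \otimes \bC,\fk \otimes \bC) \cong H^*_\bR(\fg,\fk;\bC) \cong I_G
\]
used in \eqref{borel-matsushima2} to define $\tilde j^*$. At the identity coset, both curvature tensors are built from the Lie bracket $[\cdot,\cdot]_\fk$ restricted to $\mathfrak{p}$ or $i\mathfrak{p}$ respectively, and these agree after extending the identity on $\fk \otimes \bC$ to the isomorphism $\mathfrak{p} \otimes \bC \cong (i\mathfrak{p}) \otimes \bC$ inside $\fg \otimes \bC = \fl \otimes \bC$. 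Since Chern--Weil is functorial in Lie pairs, this identifies $\tilde P$ with the image of $P(\Omega'_\fk)$ under the above composite, yielding $\tilde j^*(\lambda^* P) = j^*(\tilde P) = \psi^* \mu^*(P)$, which is the asserted commutativity.

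The main obstacle I foresee is precisely this last step: one must carefully track the factors of $i$ introduced by the exchange $\mathfrak{p} \leftrightarrow i\mathfrak{p}$ on invariant polynomials of different homogeneous degrees, and confirm that this is consistent with the specific choice of complexification isomorphism used to define $\tilde j^*$. Everything else (the description of $\psi^* \mu^*$ and of $\lambda^*$ via Chern--Weil, and the identification of a Chern--Weil form on $G/K$ with an element of $I_G$) is standard, so the whole proof reduces to a routine but somewhat fiddly piece of semisimple Lie algebra bookkeeping.
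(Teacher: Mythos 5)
The paper gives no proof of this statement, citing only [Bo77, Prop.\ 7.2] and [Gia, \S 3.2]; your Chern--Weil sketch is precisely the argument found in those references. The bookkeeping you flag as the main obstacle in fact resolves on the nose: writing $\fg = \fk \oplus \mathfrak{p}$ and $\fl = \fk \oplus i\mathfrak{p}$, both canonical curvatures extend $\bC$-bilinearly to the same tensor $-[\cdot,\cdot]_{\fk}$ on the common complex subspace $\mathfrak{p}\otimes\bC = (i\mathfrak{p})\otimes\bC$ of $\fg\otimes\bC \cong \fl\otimes\bC$, and since the isomorphism \eqref{lie-algebra-cohomology} compares the two relative Chevalley--Eilenberg complexes by $\bC$-bilinear extension to this common subspace rather than by pullback along the real map $X \mapsto iX$, the complexified Chern--Weil representatives agree literally and no stray powers of $i$ arise.
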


This was shown by Borel, see \cite[Proposition 7.2]{Borel4} (and \cite[\S 3.2]{Gian} for more details). It is this topological version that is most convenient for our purposes. 

\subsection{Borel's main theorems}

Now we discuss the ranges where Borel proved that $j^*$ (and hence $\tilde{j}^*$) is an isomorphism. 

Let $\bG$ be a connected semisimple algebraic group defined over $\bZ$ (e.g.\ $\bG = \SO_{g,g}$ or $\bG=\Sp_{2g}$). Let $G=\bG(\bR)$, $G_{\bC}= \bG(\bC)$, and $K$ and $L$ be as in the previous subsection. Let $\Gamma \subset G$ be an arithmetic subgroup, i.e.\ a subgroup such that $\Gamma \cap \bG(\bZ)$ has finite index in both $\Gamma$ and $\bG(\bZ)$. Let $r: \bG(\bC) \to \GL(E)$ be a finite-dimensional holomorphic representation (which is automatically algebraic). The Borel--Matsushima homomorphism can be extended to a map
$$j^q: H^q (\fg,\fk;E) \lra H^q (\Gamma;E).$$

\begin{theorem}[Borel]\label{borelstheorem}
There is a constant $c(\bG,r)$ such that
\begin{enumerate}[(i)]
\item  $j^q$ is injective for $q \leq c(\bG,r)$ and surjective if in addition $q \leq \rank_{\bR} (\bG)$. 
\item If $E^G=0$, then $H^q (\Gamma;E)=0$ for $q \leq \min (c(\bG;r), \rank_{\bR} (\bG)-1)$.
\end{enumerate}
In our cases of interest, these constants may be estimated as follows.
\begin{enumerate}[(i)]
\item If $\bG=\SO_{g,g}$, and if $r$ is the $k$th tensor power of the defining representation $V$, then $c(\bG,r) \geq g-2-k$.
\item If $\bG=\Sp_{2g}$, and if $r$ is the $k$th tensor power of the defining representation $V$, then $c(\bG,r) \geq g-1-k$.
\end{enumerate}
\end{theorem}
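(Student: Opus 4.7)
My plan is to split the theorem into its two halves: the general comparison and vanishing statement encoded by the constant $c(\bG,r)$, and the explicit lower bounds on $c$ for the cases at hand. For the first half I would follow Borel's route of \cite{Borel3}: identify $H^*(\Gamma;E)$ with the cohomology of the flat $E$-bundle on $\Gamma \backslash X$, where $X = G/K$, so that $j^q$ is induced by the inclusion of the subcomplex of $G$-invariant $E$-valued forms. The heart of the argument is a regularisation procedure showing that in degrees $q \le c(\bG,r)$ every closed form is cohomologous to a $G$-invariant one, and two $G$-invariant forms represent the same class only if they agree; surjectivity needs the additional constraint $q \le \rank_{\bR}(\bG)$, coming from when the Borel--Serre boundary starts to contribute to the cohomology. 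The constant $c(\bG,r)$ is characterised as the smallest degree in which a non-trivial irreducible $(\fg,K)$-module can have non-zero $(\fg,\fk)$-cohomology with coefficients in $E$, and Borel bounds it from below in terms of the real rank of $\bG$ and the highest weights of $r$. I would not reprove this analytic input, only cite it cleanly.

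Granted (i), the vanishing in (ii) follows by combining it with the Lie-algebraic statement that $H^q(\fg,\fk;E) = 0$ in the stated range whenever $E^G = 0$: the Casimir operator $C$ of $\fg$ acts by a non-zero scalar on every non-trivial irreducible $G_\bC$-summand of $E$, commutes with the Chevalley--Eilenberg differential and with restriction to $\fk$-invariants, and therefore gives a splitting of the cohomology into a direct sum of generalised $C$-eigenspaces; under the hypothesis $E^G = 0$ only non-trivial summands occur, and these are killed in the range. Injectivity of $j^q$ then forces $H^q(\Gamma;E) = 0$.

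The third part, the explicit numerical bounds for $\bG = \SO_{g,g}$ and $\bG = \Sp_{2g}$ with $r = V^{\otimes k}$, is the step I expect to require actual work. The plan is to prove a tensor-shift inequality of the form $c(\bG, V \otimes r) \ge c(\bG,r) - 1$. One clean route is to decompose $V^{\otimes k}$ into irreducible $G_\bC$-summands using Schur--Weyl duality for $\Sp_{2g}$ and Brauer-algebra duality for $\SO_{g,g}$, and then apply Borel's estimate on each summand; any irreducible constituent of $V^{\otimes k}$ has highest weight bounded componentwise by $k$ times the first fundamental weight, and in Borel's estimate the stability constant decreases by at most one per unit of highest weight. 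Iterating the shift $k$ times from the constant-coefficient base cases $c(\SO_{g,g},\bQ) \ge g-2$ and $c(\Sp_{2g},\bQ) \ge g-1$, which are exactly what underlies Theorem \ref{cohomology-arithmetic-group-constant-coeffs}, yields the stated estimates.

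The main obstacle, as flagged above, is controlling the tensor-shift with the correct constant: Borel's papers state the dependence on $r$ only qualitatively, so extracting the precise linear bound of $1$ per tensor factor of $V$ requires a careful re-inspection of the root-datum inequalities in \cite{Borel3} for the specific groups $\SO_{g,g}$ and $\Sp_{2g}$, and verifying that the loss is exactly one and not more.
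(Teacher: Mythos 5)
Your outline of parts (i) and (ii) matches the paper in spirit --- the paper itself does not reprove these but cites Borel --- though the logic you sketch for deducing (ii) from (i) is backwards: from ``$H^q(\fg,\fk;E)=0$'' and \emph{injectivity} of $j^q$ one learns nothing about $H^q(\Gamma;E)$; you would need the \emph{surjectivity} half of (i) (available when $q\le\rank_\bR(\bG)$) to conclude. If you used that you would even get the range $q\le\min(c,\rank_\bR\bG)$ rather than the stated $q\le\min(c,\rank_\bR\bG-1)$, which suggests either the Casimir argument does not apply in exactly the generality you want or Borel's statement is phrased for a broader class of coefficients; in any case the paper sidesteps this by citing Borel directly.

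The substantive divergence is in the numerical estimate, and there your proposal stops precisely where the work begins. The paper does not pass through a Schur--Weyl/Brauer decomposition of $V^{\otimes k}$ into irreducibles at all: Borel's recipe (as quoted from \cite{Borel2} in the paper) computes $c(\bG,r)$ as $\min_{\mu} c(\bG,\mu)$ over \emph{all} weights $\mu$ of $r$ with respect to the $\bQ$-split torus, so one can feed in $V^{\otimes k}$ directly --- its weights are simply the sums of $k$ weights $\pm\alpha_i$ of $V$ --- and no decomposition into constituents is needed. The paper then picks an explicit linear form $L$ (with rapidly decreasing coefficients $a_1>\cdots>a_g>0$) separating positive from negative roots, writes out $\rho=\sum_i(g-i+1)\alpha_i$, bounds $L(\rho-\mu-\eta)$ below by a chain of elementary inequalities over $\mu$ a weight of $V^{\otimes k}$ and $\eta$ a sum of $q$ distinct positive roots, and verifies positivity exactly when $q\le g-1-k$. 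This is the content you defer under the heading ``the main obstacle''; your proposed tensor-shift inequality $c(\bG,V\otimes r)\ge c(\bG,r)-1$ is a plausible packaging, but establishing it with constant exactly $1$ is equivalent to redoing this root-system calculation, so the proposal as written does not close the gap. One further small inaccuracy: it is not true that every irreducible constituent of $V^{\otimes k}$ has highest weight bounded componentwise by $k\omega_1=k\alpha_1$; already $\Lambda^2 V$ has highest weight $\alpha_1+\alpha_2$, which is not $\le k\alpha_1$ componentwise. What \emph{is} true, and what the paper uses, is the cruder estimate $L(\mu)\le k\, L(\alpha_1)$ for any weight $\mu$ of $V^{\otimes k}$, thanks to the choice $a_1\gg a_2\gg\cdots$.
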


\begin{proof}
What remains to be done is the computation of $c(\bG,r)$, and there is a recipe for that, see \cite{Borel2}. We discuss the symplectic case as the orthogonal case is very similar. The constant $c(\bG,r)$ can be read off from the root system of $G$. Pick a maximal $\bQ$-split torus. Pick a system $\Phi^+$ of positive roots and let $\fn$ be the Borel-subalgebra. Let $\rho$ be half the sum of positive roots. For each character $\mu \in \chi(S)$, let $c(G;\mu)$ be the maximum of all $q$ such that for each weight $\eta$ of $S$ in $\Lambda^q \fn$, we have $\rho-\mu-\nu >0$, i.e. it is a sum of positive roots. For a rational representation $r$, let $c(G,r)$ be the minimum of all $c(G,\mu)$, where $\mu$ is a weight  of $r$ with respect to $S$.

To do this in a concrete example, consider $\Sp_{2n}$. It is of type $C_g$. The above data in this concrete case are described in \cite[p.\ 338]{Proc}.
A basis for the Cartan subalgebra is given by the matrices $e_{i,i}-e_{g+i,g+i}$, $i=1,\ldots,g$, which identifies the Cartan subalgebra with $\bR^n$. Let $\alpha_i$ be the $i$th coordinate function. The roots are $\pm(\alpha_i+\alpha_j)$, $i<j$, $\alpha_i-\alpha_j$, $i \neq j$ and $\pm 2 \alpha_i$, $i=1,\ldots,g$. The system of positive roots is given by $\alpha_i + \alpha_j; \alpha_i-\alpha_j$ ($i<j$) and $2 \alpha_i$, $i=1,\ldots,g$. The simple roots are $\alpha_i-\alpha_{i+1}$, $i=1,\ldots,g-1$ and $2 \alpha_g$. A linear form that distinguishes the positive from the negative roots is given by $L:\alpha_i \mapsto a_i$; $a_1 > \cdots > a_g >0$ and we may take $a_1 = e^{10g}$, $a_j=e^{-10 j g}$ to ease the estimates that follow.
The weight $\rho$ is 
$$\rho = \sum_{i=1}^{g} (g-i+1) \alpha_i.$$

The weights of $\fn$ are precisely the positive roots; thus the weights of $\Lambda^q \fn$ are the sums of $q$ different positive roots. The weights of the defining representation are $\pm \alpha_i$, $i=1,\ldots,g$ and those of its $k$th tensor power are therefore the sums of $k$ of those.  

\begin{claim}
If $r$ is the $k$th tensor power of the defining representation, then $c(G,r) \geq g-1-k$.
\end{claim}

We have to prove that $g-1-k \leq c(G,\mu)$ for all weights of $r$, in other words, if $q \leq g-1-k$, $\mu$ is a weight of $r$ and $\eta$ a sum of $q$ distinct positive roots, then $\rho - \mu -\eta >0$ or $L(\rho - \mu -\eta) >0$. Due to our choice of $L$, it is easy to see that
$$L(\rho - \mu -\eta) \geq L(\rho - k\alpha_1 -\eta) \geq L\left(\rho - r \alpha_1 - \sum_{j=1}^{q} ( \alpha_1 + \alpha_j)\right).$$
According to the value of $\rho$, this is
$$\sum_{i=1}^{g} (g-i+1) a_i - ka_1 - \sum_{j=1}^{q} (a_1 + a_j) = (g-k-q-1)a_1 +  \sum_{i=2}^{g} (g-i+1) a_i - \sum_{i=2}^{q}   a_i.$$
The second and third summand yield
$$\sum_{i=2}^{g} (g-i+1) a_i - \sum_{i=2}^{q}  a_i = \sum_{i=2}^{q} (g-i)a_i + \sum_{i=q+1}^{g} (g-i+1)a_i \geq (g-q)\sum_{i=2}^{q} a_i + \sum_{i=q+1}^{g} a_i >0$$
if $q < g$. Thus, if $g-k-1 \geq q$, the whole sum is positive, as claimed.
\end{proof}

\subsection{Proof of Theorem \ref{cohomology-arithmetic-group-constant-coeffs}: cohomology of $\Ort_{g,g}(\bZ)$, $\Sp_{2g}(\bZ)$ and $\Gamma_g(1,2)$}

We now show how Theorem \ref{cohomology-arithmetic-group-constant-coeffs} follows from the results surveyed in this section. By the universal coefficient theorem, it is enough to prove this for cohomology with coefficients in $\bC$.
Consider first the group $\SO_{g,g} (\bZ)$ instead of $\Ort_{g,g}(\bZ)$. Then we take (with notation from the beginning of this section) $K= \mathrm{S}(\Ort(g)\times \Ort(g))$, $L=\SO(2g)$. According to Proposition \ref{borel-mathsushima3} the diagram
\begin{equation*}
\begin{gathered}
\xymatrix{
H^* (B\Ort;\bC) \ar[r]^-{\eta^*} \ar[rd]_{\lambda^* \eta^*} & H^* (B\mathrm{S}(\Ort(g) \times \Ort(g);\bC) \ar[r]^-{\psi^* \mu^*}  \ar[d]^{\lambda^*} &  H^*(B \SO_{g,g} (\bZ);\bC) \\
 & H^* \left(\frac{\SO(2g)}{\mathrm{S}(\Ort(g) \times \Ort(g))};\bC\right) \ar[ur]_{\tilde{j}^*} & 
}
\end{gathered}
\end{equation*}
is commutative, where $\tilde{j}^*$ is the Borel--Matsushima homomorphism or rather its version \ref{borel-matsushima2} (which is, by Theorem \ref{borelstheorem}, an isomorphism in the range of degrees under consideration) and all other maps are induced by the maps of spaces introduced before. To prove that $(\beta^{4m}_{g})^* = (\eta \circ \mu \circ \psi)^*$ is an isomorphism in the stable range, it is therefore enough to show that $(\eta \circ \lambda)^*$ is an isomorphism in the stable range. But 
$$\frac{\SO(2g)}{\mathrm{S}(\Ort(g) \times \Ort(g))} \cong \frac{\Ort(2g)}{\Ort(g) \times \Ort(g)}$$
and it is a classical fact that the map
$$\frac{\Ort(2g)}{\Ort(g)\times \Ort(g)} \overset{\lambda}{\lra} B\Ort(g) \times B\Ort(g) \overset{\eta}\lra B\Ort$$
is $(g-1)$-connected. This proves that the composition 
$$B \SO_{g,g}(\bZ) \lra B \Ort_{g,g}(\bZ) \lra B\Ort$$
induces an isomorphism in cohomology with complex coefficients in degrees $\leq g-2$, and so on cohomology with rational or real coefficients also. In particular, the map $H^* (B\Ort_{g,g}(\bZ);\bR) \to H^* (B\SO_{g,g}(\bZ);\bR)$ is surjective in this range of degrees. On the other hand, $p:B \SO_{g,g}(\bZ) \to B \Ort_{g,g}(\bZ)$ is up to homotopy a twofold covering and thus, by the classical transfer argument, induces an injection in real cohomology in all degrees. This finishes the proof of Theorem \ref{cohomology-arithmetic-group-constant-coeffs} in this case, and gives the following corollary.

\begin{corollary}\label{cor:TrivGaloisAction}
The action of the Galois group $\bZ/2$ of the cover acts trivially on $H^*(B \SO_{g,g}(\bZ);\bR)$ in degrees $* \leq g-2$.
\end{corollary}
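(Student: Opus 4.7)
The plan is to deduce this immediately from what was just established in the body of the proof of Theorem \ref{cohomology-arithmetic-group-constant-coeffs}. The map $p : B\SO_{g,g}(\bZ) \to B\Ort_{g,g}(\bZ)$ is, up to homotopy, a principal $\bZ/2$-bundle, with deck group the Galois group of the corollary. For any such cover (of a paracompact base), the pullback $p^*$ lands inside the $\bZ/2$-invariants of $H^*(B\SO_{g,g}(\bZ);\bR)$, because the pullback of a class along $p$ is automatically fixed by the deck action.

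So the main point is to observe that $p^*$ is already known to be \emph{surjective} in the stated range. Indeed, the computation in the preceding proof shows that the composition
\[
B\SO_{g,g}(\bZ) \xrightarrow{\ p\ } B\Ort_{g,g}(\bZ) \lra B\Ort
\]
is an isomorphism on $H^*(-;\bR)$ in degrees $* \leq g-2$; in particular $p^*$ must be surjective in that range (as the outer map factors through it). Combined with the previous paragraph, every class in $H^{* \leq g-2}(B\SO_{g,g}(\bZ);\bR)$ is fixed by the $\bZ/2$-action, which is precisely the statement of the corollary.

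There is essentially no obstacle here; the one small point to verify cleanly is that $p$ is genuinely modelled by a principal $\bZ/2$-bundle so that the pullback lands in the invariants. This follows because $\SO_{g,g}(\bZ)$ has index $2$ in $\Ort_{g,g}(\bZ)$, giving a short exact sequence of groups $1 \to \SO_{g,g}(\bZ) \to \Ort_{g,g}(\bZ) \to \bZ/2 \to 1$, and hence a fibre sequence $B\SO_{g,g}(\bZ) \to B\Ort_{g,g}(\bZ) \to B(\bZ/2)$ which presents $p$ as the claimed principal cover. The rest of the argument is purely formal.
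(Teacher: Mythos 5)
Your argument is correct and is essentially identical to the paper's: the paper also deduces the corollary from the surjectivity of $p^*\colon H^*(B\Ort_{g,g}(\bZ);\bR)\to H^*(B\SO_{g,g}(\bZ);\bR)$ in degrees $\leq g-2$ (established by factoring through $B\Ort$) together with the observation that the image of $p^*$ lies in the deck-group invariants. The one place the paper is more explicit is in also noting injectivity of $p^*$ via the transfer, but that is used to finish the theorem rather than being needed for this corollary.
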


The proof in the case of $\Sp_{2g}(\bZ)$ or $\Gamma_g(1,2)$ is similar; let us write $\Gamma$ for either of these groups. The relevant diagram in this case is
\begin{equation*}
\begin{gathered}
\xymatrix{
H^* (\SO /\U;\bC) \ar[r]^-{\eta^*}  &  H^* (B\U(g);\bC) \ar[r]^-{\psi^* \circ \mu^*}  \ar[d]^-{\lambda^*} &  H^*(B \Gamma;\bC) \\
 & H^* \left(\frac{\USp(g)}{\U(g)};\bC\right) \ar[ur]_-{\tilde{j}^*}. & 
}
\end{gathered}
\end{equation*}
Here, $\lambda: \USp(g)/\U(g) \to B\U(g)$ is the classifying map for the bundle $\USp(g) \to \USp(g)/\U(g)$, and $\tilde{j}^*$ is the Borel--Matsushima homomorphism, which is an isomorphism in degrees $* \leq g-1$ by Theorem \ref{borelstheorem} (note that $\Gamma_g(1,2) \leq \Sp_{2g}(\bZ) = \bG(\bZ)$ has finite index, so arithmetic, hence satisfies the hypotheses of that theorem). The triangle commutes by Proposition \ref{borel-mathsushima3}.

It remains to prove that $(\eta \circ \lambda)^*$ is an isomorphism in degrees $* \leq g-1$. The rational homotopy groups of $\SO/\U$ have rank 1 in degrees congruent to 2 modulo 4 and are trivial otherwise, and the same is true for the rational homotopy groups of $\USp(g) /\U(g)$ in degrees $<2g$. Look at the maps
$$\pi_{4k+2} (\USp(g)/\U(g)) \otimes \bQ\stackrel{\lambda_*}{\to}  \pi_{4k+2} (B\U(g))\otimes \bQ \stackrel{\eta_*}{\to} \pi_{4k+2} (\SO/\U) \otimes \bQ \stackrel{}{\to} \pi_{4k+2} (B\U)\otimes \bQ.$$
The first map is injective since $\pi_{4k+2} (\USp(g))\otimes \bQ =0$. The composition of the other two maps is $\Delta_*$. An easy calculation with Chern classes shows that
$$\pi_{4k+2}(\Delta) :\pi_{4k+2}(B\U(g)) \lra \pi_{4k+2}(B\U)$$
is multiplication by 2, so rationally injective (recall that we are in the stable range, as $4k+2 \leq g-1$) and therefore $\eta_* \circ \lambda_*$ is rationally injective and hence, since source and target are both $1$-dimensional, a rational isomorphism. Hence the induced map $(\eta \circ \lambda)^*$ in rational cohomology is an isomorphism in degrees $* \leq g-1$ (in fact, roughly up to degree $2g$). The same then holds for cohomology with complex coefficients too.

\subsection{Arithmetic representations and cohomology}

\begin{definition}\label{def-arithm-repres}
Let $\bG$ be $\Sp_{2g}$ or $\Ort_{g,g}$ and $\Gamma \subset \bG(\bR)$ be an arithmetic subgroup. A finite dimensional complex representation $\Gamma \to \GL (E)$ is called \emph{arithmetic} if there exists a holomorphic representation $\bG(\bC) \to \GL(E)$ extending $r$. If there is moreover a $\bG(\bC)$-equivariant injection $E \to (V^{\otimes k})^m$ for some $m \in \bN$ (where $V$ denotes the defining representation of $\bG(\bC)$), we say that $E$ is \emph{arithmetic of degree $\leq k$}.
\end{definition}

\begin{lemma}\label{lemma-properties-artihmetic-reps}
Let $\bG$ be as in Definition \ref{def-arithm-repres}, let $\Gamma \subset \bG(\bR)$ be an arithmetic group and assume that it is Zariski dense.
\begin{enumerate}[(i)]
\item Each arithmetic representation has finite degree.
\item Each arithmetic representations of $\Gamma$ is a direct sum of irreducibles.
\item Any subrepresentation $F \subset E$ of an arithmetic representation is arithmetic and the degree of $F$ is bounded by the degree of $E$. 
\item If $F \subset E$ is a subrepresentation of an arithmetic representation, then $E/F$ is arithmetic and the degree of $E/F$ is bounded by the degree of $E$. 
\end{enumerate}
\end{lemma}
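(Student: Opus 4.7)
\emph{Plan.} The argument rests on two structural facts. First, the complex algebraic group $\bG(\bC) = \Sp_{2g}(\bC)$ or $\Ort_{g,g}(\bC)$ is reductive, so every finite-dimensional holomorphic representation is algebraic and decomposes as a direct sum of irreducibles. Second, the hypothesis that $\Gamma$ is Zariski dense in $\bG(\bC)$ provides a dictionary between $\Gamma$-invariant and $\bG(\bC)$-invariant subspaces of any holomorphic representation $E$: the set $\{g \in \bG(\bC) \mid r(g)W \subseteq W\}$ is Zariski closed in $\bG(\bC)$, so if it contains the dense subgroup $\Gamma$ it must equal all of $\bG(\bC)$.

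Granting these, parts (ii)--(iv) drop out quickly. For (ii), complete reducibility of $E$ as a $\bG(\bC)$-representation descends to complete reducibility as a $\Gamma$-representation, because the two notions of subrepresentation coincide. For (iii), a $\Gamma$-subrepresentation $F \subset E$ is automatically a $\bG(\bC)$-subrepresentation, so restricting the given injection $E \hookrightarrow (V^{\otimes k})^m$ yields an equivariant injection $F \hookrightarrow (V^{\otimes k})^m$, exhibiting $F$ as arithmetic of degree $\leq k$. For (iv), part (ii) furnishes a $\bG(\bC)$-equivariant complement $F'$ with $E = F \oplus F'$, so $E/F \cong F'$ as $\bG(\bC)$-representations, and (iii) applied to $F' \subset E$ bounds its degree.

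The substantive step is (i). The plan is to decompose $E$ into a finite direct sum of $\bG(\bC)$-irreducibles $E_i$ and then fit each $E_i$ inside a tensor power of $V$. For the classical groups $\Sp$ and $\Ort$ the defining representation $V$ is self-dual via the invariant form and $V \otimes V$ contains a trivial summand (dual to that form); consequently $V^{\otimes k}$ embeds as a direct summand of $V^{\otimes(k+2)}$. The classical first fundamental theorem of invariant theory (Schur--Weyl-type duality for $\Sp$ and $\Ort$) then guarantees that every algebraic irreducible of $\bG(\bC)$ appears as a direct summand of some $V^{\otimes k_i}$, so taking $k$ as the maximum of the $k_i$ and $m$ as the total multiplicity in the decomposition delivers the desired injection $E \hookrightarrow (V^{\otimes k})^m$.

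The one real subtlety is a parity phenomenon in the symplectic case: the central element $-I \in \Sp_{2g}(\bC)$ acts on $V^{\otimes k}$ as $(-1)^k$, so irreducible summands of $E$ with different central characters cannot all sit inside the same tensor power. This is resolved either by reading ``degree $\leq k$'' as permitting an injection into $\bigoplus_{j \leq k}(V^{\otimes j})^m$, or by allowing the bound to be inflated by one so that opposite-parity irreducibles can be absorbed into $V^{\otimes(k+1)} = V \otimes V^{\otimes k}$; in either interpretation the resulting degree is controlled by the largest value of $\sum_i \lambda_i$ over highest weights $\lambda = \sum_i \lambda_i \epsilon_i$ occurring in the decomposition of $E$.
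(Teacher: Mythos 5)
Your treatment of parts (ii)--(iv) is essentially the paper's argument in different words: the paper fixes a $\Gamma$-invariant subspace $F$ of dimension $d$ and notes that the stabiliser of $F \in \Gr_d(E)$ is Zariski closed, whereas you work with $\{g \mid r(g)W \subseteq W\}$ directly; these are the same Zariski-density observation, and the deductions about complete reducibility, complements, and quotients coincide.

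Part (i) is where you genuinely diverge. The paper invokes the bijection between holomorphic representations of $\bG(\bC)$ and unitary representations of a maximal compact subgroup $K$, then applies the Peter--Weyl theorem for linear groups (Procesi, Thm.\ 8.32) to embed any unitary $K$-representation in $(V^{\otimes k})^m$. You instead use the first fundamental theorem / Schur--Weyl duality directly for the complex classical groups, together with the self-duality of $V$ and the fact that $V^{\otimes k}$ splits off $V^{\otimes(k+2)}$. Both are standard and both succeed; yours is somewhat more hands-on and specific to $\Sp$ and $\Ort$, while the paper's is uniform over all compact real forms.

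Your parity remark is a real observation, and it applies equally well to the paper's proof. Two comments. First, the phenomenon is not special to the symplectic case: $-I$ lies in $\Ort_{g,g}(\bC)$ and indeed in $\SO_{g,g}(\bC)$ as well (its determinant is $(-1)^{2g}=1$), so the central-character parity obstruction occurs for both families. Second, your proposed second fix --- ``inflate the bound by one so opposite-parity irreducibles can be absorbed into $V^{\otimes(k+1)}$'' --- does not work: the parity of the central character is an invariant of the irreducible, so an irrep occurring in $V^{\otimes 2j}$ can never occur in $V^{\otimes(k+1)}$ when $k+1$ is odd, no matter how large $k$ is. The only genuine fix is the first one, re-reading ``degree $\leq k$'' as an equivariant injection into $\bigoplus_{j\leq k}(V^{\otimes j})^m$ (equivalently, taking the degree of a reducible representation to be the maximum of the degrees of its irreducible summands). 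Note that in the paper's subsequent use of the lemma (in the proof of its Proposition 3.8) one immediately reduces to the irreducible case, where the parity obstruction vanishes, so the slip in the literal statement of (i) is harmless for the applications; still, you are right that the statement as written requires the modified reading.
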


\begin{proof}
Let $K \subset \bG(\bC)$ be a maximal compact subgroup. There is a bijection between unitary representations of $K$ and holomorphic representations of $\bG(\bC)$, \cite[Corollary 8.7.1]{Proc}. By the Peter--Weyl theorem for linear groups \cite[Theorem 8.32]{Proc}, any unitary representation of $K$ is contained in $(V^{\otimes k})^m$ for some integers $k,m$, and thus the same is true for a holomorphic representation of $\bG(\bC)$. This shows the first claim.

The assumption on Zariski density implies that $\Gamma$ is Zariski dense in $\bG(\bC)$. Let $E$ be an arithmetic representation and $F \subset E$ be a $\Gamma$-invariant subspace. Let $d$ be the dimension of $F$ and $\Gr_d (E)$ be the Grassmannian of $d$-dimensional subspaces, which is a projective variety. The group $\bG(\bC)$ acts algebraically on $\Gr_d (E)$ and $\Gamma$ fixes the point $F\in \Gr_d (E) $. As $\Gamma$ is assumed to be Zariski dense, the group $\bG(\bC)$ fixes $F$ as well, which means that $F$ is a $\bG(\bC)$-subrepresentation. Clearly, the action of $\bG(\bC)$ on $F$ is still holomorphic. This proves the third claim (the degree bound is obvious).

Because any holomorphic representation of $\bG(\bC)$ decomposes into a sum of irreducibles, the above argument shows that $F \subset E$ has a $\bG(\bC)$-invariant, and so $\Gamma$-invariant, complement. Therefore the second claim holds. The fourth statement follows from the second one because $F$ has a $\Gamma$-invariant complement in $E$ (which is isomorphic to $E/F$).
\end{proof}

\begin{proposition}\label{prop:corollary-to-borel}
Assume that $\bG = \Sp_{2g}$, $\bG=\SO_{g,g}$ or $\bG = \Ort_{g,g}$, let $G:= \bG(\bR)$ and $\Gamma \subset G$ be arithmetic and Zariski dense.
Let $r: \Gamma \to \GL(E)$ be an arithmetic representation of degree $\leq k$. Assume that $q \leq g-2-k$ (orthogonal case) or $q \leq g-1-k$ (symplectic case). Then the inclusion $E^G \to E$ induces isomorphisms
$$H^q (\Gamma; \bC) \otimes E^{\Gamma}  =H^q (\Gamma; \bC) \otimes E^G \cong  H^q (\Gamma; E^G) \cong H^q (\Gamma; E).$$
\end{proposition}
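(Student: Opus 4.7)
The plan is to split the representation into its $G$-invariant part and a complement, compute cohomology on each piece separately using that the trivial coefficient case gives a tensor product, and then invoke Borel's vanishing theorem (Theorem \ref{borelstheorem}(ii)) on each irreducible summand of the complement.

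First I would verify the identity $E^\Gamma = E^G$. The subspace $E^\Gamma \subset E$ is $\Gamma$-invariant, hence by Lemma \ref{lemma-properties-artihmetic-reps}(iii) it is a $\bG(\bC)$-subrepresentation and the action of $\bG(\bC)$ on it is still holomorphic. Since $\Gamma$ acts trivially on $E^\Gamma$ and is Zariski dense in $\bG(\bC)$, the whole group $\bG(\bC)$ must act trivially; thus $E^\Gamma = E^{\bG(\bC)}$, and the obvious inclusions give $E^\Gamma = E^{\bG(\bC)} = E^G$. Using complete reducibility of arithmetic representations (Lemma \ref{lemma-properties-artihmetic-reps}(ii)), I would then decompose $E = E^G \oplus E'$ as a $\Gamma$-module, with $E'$ a sum of nontrivial irreducible arithmetic subrepresentations. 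The induced splitting
\[
H^q(\Gamma; E) \cong H^q(\Gamma; E^G) \oplus H^q(\Gamma; E')
\]
together with the trivial-coefficient identification $H^q(\Gamma; E^G) = H^q(\Gamma;\bC) \otimes E^G$ reduces the proposition to showing $H^q(\Gamma; E') = 0$ in the stated range, and further (by additivity) to showing $H^q(\Gamma; F) = 0$ for each irreducible summand $F$ of $E'$.

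For such an $F$ we have $F^G = 0$, and by Lemma \ref{lemma-properties-artihmetic-reps}(iii), $F$ is arithmetic of degree at most $k$, i.e.\ it admits a $\bG(\bC)$-equivariant embedding $F \hookrightarrow (V^{\otimes k})^m$. Hence the weights of $F$ (with respect to a maximal $\bQ$-split torus) form a subset of the weights of $V^{\otimes k}$, and so the minimum in the definition of $c(\bG, F)$ is taken over a subset of the weights used for $c(\bG, V^{\otimes k})$, giving $c(\bG, F) \geq c(\bG, V^{\otimes k})$. By the explicit estimates in Theorem \ref{borelstheorem}, this is at least $g-2-k$ when $\bG = \SO_{g,g}$ and at least $g-1-k$ when $\bG = \Sp_{2g}$. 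Since $\rank_\bR(\bG) = g$ in both cases, we also have $\rank_\bR(\bG) - 1 = g-1 \geq g-1-k$, so Theorem \ref{borelstheorem}(ii) yields $H^q(\Gamma; F) = 0$ in the required range.

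The main obstacle is ensuring that the degree bound survives passage to subrepresentations in a way that controls Borel's constant $c(\bG, F)$. This is where Lemma \ref{lemma-properties-artihmetic-reps}(iii) is essential: it both guarantees arithmeticity of $F$ and gives the degree bound that translates into the weight-set comparison. The case $\bG = \Ort_{g,g}$ (which is not connected, so Theorem \ref{borelstheorem} does not apply literally) is a minor remaining point; it can be handled by restricting to the finite-index arithmetic subgroup $\Gamma \cap \SO_{g,g}$ and using the usual transfer argument (as in the derivation of Corollary \ref{cor:TrivGaloisAction}), together with the observation that an arithmetic representation of $\Ort_{g,g}$ of degree $\leq k$ remains arithmetic of degree $\leq k$ upon restriction.
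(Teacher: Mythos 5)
Your proof is correct and follows essentially the same route as the paper: reduce by complete reducibility (Lemma \ref{lemma-properties-artihmetic-reps}) to a nontrivial irreducible summand $F$ of degree $\leq k$, invoke Theorem \ref{borelstheorem}(ii) to get vanishing, and handle $\Ort_{g,g}$ by passing to $\Gamma_0 = \Gamma \cap \SO_{g,g}(\bR)$, using the transfer and the triviality of the $\bZ/2$-action from Corollary \ref{cor:TrivGaloisAction}. Your explicit weight-set comparison showing $c(\bG, F) \geq c(\bG, V^{\otimes k})$ makes visible a step the paper leaves implicit in the phrase ``Theorem \ref{borelstheorem} finishes the proof''; conversely the paper spells out the $\bZ/2$-invariants computation in the $\Ort_{g,g}$ case in slightly more detail than you do, but your sketch correctly identifies all the needed ingredients.
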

\begin{proof}
The first equality holds by Zariski density, the second is clear.
By Lemma \ref{lemma-properties-artihmetic-reps}, we can decompose the representation $E$ as a sum of irreducibles; thus, without loss of generality, $E$ is irreducible and by Lemma \ref{lemma-properties-artihmetic-reps}, we can assume that the degree of $E$ is $\leq k$ as well.
If $E$ carries the trivial $G$-action, the claim is a tautology, so we can assume that $E^{G} =0$. Theorem \ref{borelstheorem} finishes the proof when $\bG = \Sp_{2g}$ or $\bG=\SO_{g,g}$.

If $\bG = \Ort_{g,g}$, put $\Gamma_0 := \Gamma \cap \SO_{g,g}(\bR)$, which is an index $2$ subgroup of $\Gamma$ and is arithmetic and Zariski dense as a subgroup of $\SO_{g,g}(\bR)$. We have already shown above that $H^* (\Gamma_0; E) \cong H^* (\Gamma_0;\bC) \otimes E^{\SO_{g,g}}$. Taking $\bZ/2$-invariants on both sides, we get isomorphisms
$$(H^* (\Gamma_0;\bC) \otimes E^{\SO_{g,g}} )^{\bZ/2}\cong H^* (\Gamma_0; E)^{\bZ/2}\cong H^* (\Gamma;E),$$
but we have shown in Corollary \ref{cor:TrivGaloisAction} that the group $\bZ/2$ acts trivially on the (real, and hence complex) cohomology of $\Gamma_0$ in degrees $\leq g-2$, and so as $(E^{\SO_{g,g}} )^{\bZ/2} = E^{\Ort_{g,g}}$ the claimed result follows.
\end{proof}

\begin{lemma}\label{gamma-group:zariski-dense}
The groups $\Gamma_{g} (1,2) $ and $\Sp_{2g}(\bZ)$ are Zariski dense in $\Sp_{2g}(\bC)$.
\end{lemma}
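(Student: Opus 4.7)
The plan is to reduce to the case of $\Gamma_g(1,2)$ and invoke the Borel density theorem. Since $\Gamma_g(1,2) \subset \Sp_{2g}(\bZ) \subset \Sp_{2g}(\bC)$, any Zariski-closed subvariety of $\Sp_{2g}(\bC)$ containing $\Sp_{2g}(\bZ)$ also contains $\Gamma_g(1,2)$, so it suffices to prove that $\Gamma_g(1,2)$ is Zariski dense in $\Sp_{2g}(\bC)$.

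First, I would verify that $\Gamma_g(1,2)$ is a lattice in the real Lie group $\Sp_{2g}(\bR)$. By a classical theorem of Siegel, $\Sp_{2g}(\bZ)$ is a lattice in $\Sp_{2g}(\bR)$ (i.e.\ has finite covolume with respect to Haar measure). By definition, $\Gamma_g(1,2)$ is the stabiliser inside $\Sp_{2g}(\bZ)$ of the quadratic refinement $q$, which takes only finitely many values on the standard symplectic basis; hence $\Gamma_g(1,2)$ is the kernel of a homomorphism to a finite group and is therefore of finite index in $\Sp_{2g}(\bZ)$. Any finite-index subgroup of a lattice is again a lattice, so $\Gamma_g(1,2)$ is a lattice in $\Sp_{2g}(\bR)$.

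Next, I would observe that $\Sp_{2g}(\bR)$ is a connected semisimple real Lie group with no compact factors (it is a simple Lie group of non-compact type, being the split real form of $\Sp_{2g}(\bC)$). The Borel density theorem, in its standard form, then asserts that any lattice in $\Sp_{2g}(\bR)$ is Zariski dense in the complex algebraic group $\Sp_{2g}$, i.e.\ in $\Sp_{2g}(\bC)$. Applied to $\Gamma_g(1,2)$, this gives the desired Zariski density; it then follows a fortiori that $\Sp_{2g}(\bZ)$ is Zariski dense in $\Sp_{2g}(\bC)$.

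There is no substantive obstacle: the argument is essentially a bookkeeping reduction to a classical theorem. The only points to be careful about are the standard facts that $\Sp_{2g}(\bZ)$ is a lattice (Siegel) and that $\Gamma_g(1,2)$ is of finite index in $\Sp_{2g}(\bZ)$, both of which are well known.
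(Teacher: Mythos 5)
Your proof is correct, but it takes a genuinely different route from the paper. You invoke the Borel density theorem (together with Siegel's theorem that $\Sp_{2g}(\bZ)$ is a lattice in $\Sp_{2g}(\bR)$), which immediately gives Zariski density of any lattice in the split, non-compact group $\Sp_{2g}(\bR)$. The paper instead uses a more elementary group-theoretic argument: it takes as a ``clear'' base case that $\Sp_{2g}(\bZ)$ is Zariski dense (e.g.\ because it contains integer points of enough one-parameter unipotent subgroups), passes to the normal core $\Gamma = \bigcap_{g} g\,\Gamma_g(1,2)\,g^{-1}$, observes that its Zariski closure is a normal Zariski-closed subgroup of the connected simple group $\Sp_{2g}(\bC)$ and hence is either central (finite) or everything, and concludes since $\Gamma$ is infinite. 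Your route buys conceptual clarity and applicability to an arbitrary lattice, at the cost of relying on a substantially heavier theorem; the paper's argument is self-contained modulo a direct density check for $\Sp_{2g}(\bZ)$ and needs only the simplicity of $\Sp_{2g}(\bC)$. One small inaccuracy in your writeup: $\Gamma_g(1,2)$ is not the \emph{kernel} of a homomorphism to a finite group, but rather the \emph{stabiliser} of the quadratic refinement under the action of $\Sp_{2g}(\bZ)$ on the finite set of such refinements; it is this finite-orbit/stabiliser argument, not a kernel argument, that gives finite index. (The conclusion, and hence the rest of your proof, is unaffected.)
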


\begin{proof}
It is clear that $\Sp_{2g}(\bZ)$ is Zariski dense in $\Sp_{2g}(\bC)$ . Let $\Gamma\subset \Sp_{2g}(\bZ)$ be the intersection of all conjugates of $\Gamma_g (1,2)$ under $\Sp_{2g}(\bZ)$; this is a normal subgroup of finite index. The Zariski closure $\bar{\Gamma} \subset \Sp_{2g}(\bC)$ is a Zariski-closed normal subgroup (and thus analytically closed). Since $\Sp_{2g}(\bC)$ is connected and simple, $\bar{\Gamma}$ is either finite (and contained in the centre) or all of $\Sp_{2g}(\bC)$. Clearly, it is the second of these alternatives which is true.
\end{proof}

\section{Application of rational homotopy theory and surgery theory}\label{section:rational-homotopy}

The purpose of this and the next section is to prove the following result.

\begin{theorem}\label{arithmeticity-theorem}
In the stable range, the $\Gamma(W_g^{2n})$-representation $H^q (B \Tor_{g, 1}^{2n};\bQ)$ is arithmetic of degree $\leq q$, for each $q \in \bN_0$.
\end{theorem}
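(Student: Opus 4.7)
The plan is to combine Morlet's lemma of disjunction, which rationally reduces the Torelli group to its block analogue, with a surgery-theoretic model of the block Torelli quotient due to Berglund--Madsen, in which the $\Gamma(W_{g,1}^{2n})$-action on rational cohomology is visibly arithmetic. First I would invoke Morlet's lemma of disjunction in the concordance stable range to obtain a $\Gamma(W_{g,1}^{2n})$-equivariant rational homology equivalence
\[ B\Tor_{g,1}^{2n} \simeq \Gamma(W_{g,1}^{2n}) \hcoker \Diff_\partial(W_{g,1}^{2n}) \lra \Gamma(W_{g,1}^{2n}) \hcoker \blockdiff_\partial(W_{g,1}^{2n}), \]
reducing the problem to the block Torelli quotient on the right.

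Next, Quinn's space-level simply connected surgery exact sequence, as organised in \cite{BM}, provides a $\Gamma(W_{g,1}^{2n})$-equivariant map
\[ \Gamma(W_{g,1}^{2n}) \hcoker \blockdiff_\partial(W_{g,1}^{2n}) \lra \map_*(W_{g,1}^{2n}/\partial W_{g,1}^{2n}, \mathrm{G}/\Ort) \]
which is injective on rational homotopy groups. Since $\mathrm{G}/\Ort$ is an infinite loop space, its rationalisation is a product of Eilenberg--MacLane spaces; this rational formality, together with injectivity on $\pi_* \otimes \bQ$, implies that $H^q$ of the source is a $\Gamma(W_{g,1}^{2n})$-equivariant subquotient of $H^q$ of the target. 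By Lemma \ref{lemma-properties-artihmetic-reps}, arithmeticity of bounded degree is preserved under subquotients, so it suffices to bound the arithmetic degree of $H^q(\map_*(W_{g,1}^{2n}/\partial W_{g,1}^{2n}, \mathrm{G}/\Ort);\bQ)$.

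For this, rationally $W_{g,1}^{2n}/\partial W_{g,1}^{2n}$ is a wedge of $2g$ copies of $S^n$ and $\mathrm{G}/\Ort \simeq_\bQ \prod_{k \geq 1} K(\bQ, 4k)$, hence
\[ \map_*(W_{g,1}^{2n}/\partial W_{g,1}^{2n}, \mathrm{G}/\Ort) \simeq_\bQ \prod_{k \geq 1} K(\bQ, 4k) \times K(V, 4k-n), \]
where $V := H_n(W_{g,1}^{2n};\bQ)$ is the defining representation of $\Gamma(W_{g,1}^{2n})$ (identified with its dual via the intersection form). Its rational cohomology is therefore a free graded commutative algebra on generators lying either in the trivial $\Gamma(W_{g,1}^{2n})$-representation (in degree $4k$) or in $V$ (in degree $4k-n$, for $k$ large enough that this exceeds zero). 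A class of total cohomological degree $q$ is a polynomial in these generators involving at most $r \leq q$ factors from $V$-summands, so $H^q$ is a $\Gamma(W_{g,1}^{2n})$-equivariant subspace of a direct sum of tensor constructions in at most $q$ copies of $V$, hence arithmetic of degree $\leq q$.

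The hard part will be step two: organising Quinn's surgery exact sequence $\Gamma(W_{g,1}^{2n})$-equivariantly, extracting the rationally injective normal-invariant map, and correctly identifying its target as a mapping space into $\mathrm{G}/\Ort$ with $V$ as the defining representation. This is essentially what is carried out in \cite{BM}, and for the present theorem only the equivariance of the map and the subquotient property of the induced map on rational cohomology need to be extracted from their analysis.
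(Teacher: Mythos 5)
Your strategy matches the paper's closely: reduce to block diffeomorphisms via Morlet's lemma of disjunction, then use the surgery exact sequence to express the rational cohomology of the block Torelli quotient as a subquotient of the cohomology of a mapping space into $\mathrm{G}/\Ort$, where arithmeticity is manifest, and finally invoke closure of arithmetic representations under subquotients (Lemma \ref{lemma-properties-artihmetic-reps}). The difference is in the order of reductions (you apply Morlet first, the paper last) and, more importantly, a subtlety about the existence of the map in your second step.

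There is a genuine gap in the second step as you have written it: you assert a map
\[
\Gamma(W_{g,1}^{2n}) \hcoker \blockdiff_\partial(W_{g,1}^{2n}) \lra \map_*(W_{g,1}^{2n}/\partial W_{g,1}^{2n}, \mathrm{G}/\Ort),
\]
but no such natural map exists. The normal-invariant map in Quinn's surgery fibration has source the block structure space $\widetilde{\bS}(W,\partial W)$, which the paper identifies (Lemma \ref{structure-space-vs-homogeneous-space}) with the homogeneous space $\blockaut/\blockdiff$, equivalently with $\tilde{G}' \hcoker \tilde{D}$. This space is the \emph{fibre} of the fibration $\tilde{G}' \hcoker \tilde{D} \to \Gamma \hcoker \tilde{D} \to \Gamma \hcoker \tilde{G}'$, so the comparison map goes $\tilde{G}' \hcoker \tilde{D} \to \Gamma \hcoker \tilde{D}$, opposite to what you need to form your composite. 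The paper bridges this with Lemma \ref{lem:inj-Gamma-rep}: the base $\Gamma \hcoker \tilde{G}'$ has finite homotopy groups through degree $n-1$ (by Berglund--Madsen), so the fibre inclusion induces an injection $i^\ast \colon H^q(\Gamma \hcoker \tilde{D};\bQ) \hookrightarrow H^q(\tilde{G}' \hcoker \tilde{D};\bQ)$ of $\Gamma$-modules in that range. Combined with the surjection $\eta^\ast$ onto $H^q(\tilde{G}' \hcoker \tilde{D};\bQ)$ coming from the rationally injective normal-invariant map and the rational simplicity of $\bN(W,\partial W)_0$, this exhibits $H^q(\Gamma\hcoker\tilde{D};\bQ)$ as a subquotient of the arithmetic representation $H^q(\bN(W,\partial W)_0;\bQ)$. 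You correctly flag the equivariance of the surgery map as the delicate point; the paper's resolution is via an auxiliary space $\widetilde{\bS}'$ and a basepoint-change argument for the $\Diff(W)^\delta$-action on $\pi_\ast(\bN)$, but you should also record the passage from $\Gamma \hcoker \tilde{D}$ to the actual source of the surgery map, since your write-up elides it.

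Your remaining steps (the computation of $\pi_\ast(\map_*(W/\partial W,\mathrm{G}/\Ort))\otimes\bQ$, the degree bound, and the closure of arithmeticity under subquotients) agree with the paper, and your justification via rational formality of the infinite-loop target is the content of the paper's Lemma \ref{lemma:rationally-simple}.
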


In this section, we follow the arguments by Berglund--Madsen \cite{BM}. We will be working with semi-simplicial groups and semi-simplicial monoids, so let us first establish some basic constructions for them.

If $M$ is a (possibly topological) monoid, recall that there is a functorial construction of a contractible free $M$-space $EM$ such that $BM := EM/M$ is a delooping of $M$ (provided that $\pi_0 (M)$ is a group, which is always the case for the monoids we consider here). Let $M_\bullet$ be a semi-simplicial monoid, and $EM_\bullet$ be the semi-simplicial space formed by applying the construction $M \mapsto EM$ levelwise. Let $BM_\bullet$ be the semi-simplicial space obtained by forming the construction $M \mapsto BM$ levelwise, i.e.\ $BM_\bullet = EM_\bullet / M_\bullet$, where the quotient is taken levelwise. Write $BM := \vert BM_\bullet \vert$. If $f_\bullet : M_\bullet \to N_\bullet$ is a map of semi-simplicial monoids, let $N \hcoker M$ denote the homotopy fibre of $Bf : BM \to BN$. This has an  action of the Moore loop space $\Lambda BN$, and so an action of $\pi_1(BN)$ in the homotopy category, which gives an action of $\pi_0(\vert N_\bullet \vert)$ on $N \hcoker M$ in the homotopy category.
If $f_{\bullet}$ is an inclusion, we let $(N/M)_{\bullet}:=N_{\bullet}/M_{\bullet}$ be the levelwise quotient, and there is a natural map $|(N/M)_{\bullet}| \to N \hcoker M$ which is a weak equivalence.

We introduce the following simplified notation. Let $M$ be a compact smooth manifold with boundary.

\begin{definition}
\mbox{}
\begin{enumerate}[(i)]
\item $\blockdiff_{\partial}(M)$ is the semi-simplicial group of block diffeomorphisms of $M$. Its $p$-simplices are the diffeomorphisms $\phi$ of $M \times \Delta^p$ which fix $\partial M \times \Delta^p$ and which preserve the face structure of $\Delta^p$, i.e.\ for each $\sigma \subset \Delta^p$, $\phi$ restricts to a diffeomorphism of $M \times \sigma$; the $i$th face map is given by restriction of a diffeomorphism to the $i$th face of $\Delta^p$. There is an inclusion $\mathrm{Sing}_\bullet^{sm}\Diff_{\partial}(M)$ of the semi-simplicial set of smooth singular simplices of the topological group $\Diff_{\partial}(M)$, and $|\mathrm{Sing}_\bullet^{sm}\Diff_{\partial}(M)| \simeq \Diff_{\partial}(M)$.
\item $\blockaut_{\partial}(M)_\bullet$ is the semi-simplicial monoid of block homotopy equivalences: its $p$-simplices are the self homotopy equivalences of $M \times \Delta^p$ which fix $\partial M \times \Delta^p$ pointwise and which preserve the face structure of $\Delta^p$ as above. There is an inclusion $\blockdiff_{\partial}(M)\subset \blockaut_{\partial} (M)$ of semi-simplicial monoids.
\item $D := \mathrm{Sing}_\bullet^{sm}\Diff_{\partial}(W_{g,1}^{2n})$.
\item $\tilde{D} := \blockdiff(W_{g,1}^{2n})_\bullet$.
\item $\tilde{G} := \blockaut_{\partial}(W_{g,1}^{2n})_\bullet$.
\item $\tilde{G}' \subset \tilde{G}$ is the union of those path components which contain vertices of $D$.
\item $\Gamma := \Gamma(W_{g,1}^{2n})$, which we consider as a semi-simplicial group which only has simplices in degree zero.
\end{enumerate}
\end{definition}
We have natural homomorphisms
\begin{equation*}
\xymatrix{
D \ar[r]& \tilde{D} \ar[r]& \tilde{G}' \ar[r]\ar[d]& \tilde{G}\\
& & \Gamma
}
\end{equation*}
between these semi-simplicial groups and monoids, and $B \Tor_{g, 1}^{2n} \simeq \Gamma \hcoker D$ in this notation. In this section we will study the block analogue of this space, $\Gamma \hcoker \tilde{D}$, and prove the following proposition analogous to Theorem \ref{arithmeticity-theorem} for it. In the following section we will show how to deduce Theorem \ref{arithmeticity-theorem} from this proposition.

\begin{proposition}\label{block-arithmeticity-theorem}
For $g \geq 2$ and $q \leq (n-1)$, the $\Gamma$-representation $H^q (\Gamma \hcoker \tilde{D};\bQ)$ is arithmetic of degree $\leq q$.
\end{proposition}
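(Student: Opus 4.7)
My plan is to follow Berglund--Madsen \cite{BM}, combining Quinn's space-level surgery exact sequence with the fact that $G/\Ort$ is rationally a product of Eilenberg--MacLane spaces. The first step is to produce a $\Gamma$-equivariant map
\begin{equation*}
\Phi \colon \Gamma \hcoker \tilde{D} \lra \map_*\bigl(W_{g,1}^{2n}/\partial W_{g,1}^{2n},\; G/\Ort\bigr)
\end{equation*}
arising from the Sullivan--Wall normal invariant. The block structure space $\cS := \tilde{G}'/\tilde{D}$ sits both in Quinn's surgery fibration (relating it to $\map_*(W_{g,1}/\partial W_{g,1}, G/\Ort)$ via an $L$-theory obstruction term) and as the homotopy fibre of $\Gamma \hcoker \tilde{D} \to \Gamma \hcoker \tilde{G}'$; combined with the \cite{BM} identification of the rational homotopy type of $\Gamma \hcoker \tilde{G}'$ via a Sullivan model for $W_g^{2n}$, this produces $\Phi$ and shows it is injective on rational homotopy groups in the range $q \leq n-1$. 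The $\Gamma$-action on the target is via its action on the middle reduced cohomology $\widetilde{H}^n(W_{g,1}/\partial W_{g,1}; \bQ)$ (the defining representation), and equivariance of $\Phi$ follows from functoriality of Quinn's sequence under self-homotopy-equivalences.

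Next, I would exploit the rational structure of the target. Since $G/\Ort$ is simply connected with $\pi_{4i}(G/\Ort) \otimes \bQ \cong \bQ$ for $i \geq 1$ and zero otherwise, it rationalises to a product of Eilenberg--MacLane spaces; naturally in the source, so does the mapping space, with
\begin{equation*}
\pi_k\bigl(\map_*(W_{g,1}/\partial W_{g,1}, G/\Ort)\bigr) \otimes \bQ \;\cong\; \bigoplus_{i \geq 1} \widetilde{H}^{4i-k}(W_g^{2n}; \bQ).
\end{equation*}
As $\widetilde{H}^*(W_g^{2n}; \bQ)$ is the defining representation in degree $n$ and the trivial representation in degree $2n$ (and zero otherwise), each of these rational homotopy groups is an arithmetic $\Gamma$-representation of degree $\leq 1$. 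Semi-simplicity of arithmetic representations (Lemma~\ref{lemma-properties-artihmetic-reps}) allows me to choose equivariant splittings and construct a $\Gamma$-equivariant projection of the rationalised target onto the Eilenberg--MacLane product with homotopy equal to the image of $\pi_*(\Gamma \hcoker \tilde{D}) \otimes \bQ$. Composing $\Phi_\bQ$ with this projection is an isomorphism on all rational homotopy groups, hence an equivariant rational equivalence: so $(\Gamma \hcoker \tilde{D})_\bQ$ is itself a product of Eilenberg--MacLane spaces whose homotopy groups are arithmetic $\Gamma$-subrepresentations of the target's.

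Consequently, $H^*(\Gamma \hcoker \tilde{D}; \bQ)$ is, as a $\Gamma$-algebra, the free graded commutative algebra on positively-graded generators each of which is either the trivial representation or (a subrep of) the defining representation $V$. A monomial of total cohomological degree $q$ involves at most $q$ defining-representation factors (each contributing degree $\geq 1$), so embeds in $V^{\otimes k}$ for some $k \leq q$ --- tensoring with trivial representations does not increase the arithmetic degree --- giving the claimed bound. The main obstacle will be the construction of $\Phi$ and the verification of its rational-homotopy injectivity; these rely on the detailed surgery-theoretic and rational-homotopical analysis of \cite{BM}, in particular a Sullivan-model description of $W_g^{2n}$ and its block automorphism monoid, along with the careful identification of the surgery normal invariant relative to the Torelli subgroup.
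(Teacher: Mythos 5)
Your blueprint uses the same ingredients as the paper (Quinn's surgery fibration, the rational structure of $G/\Ort$, and the Berglund--Madsen finiteness of the homotopy groups of $\Gamma\hcoker\tilde{G}'$), but as written there is a directionality problem that breaks the argument. The normal-invariant map in Quinn's fibration has the \emph{block structure space} $\widetilde{\bS}(W,\partial W)$ as its source, and the paper identifies this (via Lemma \ref{structure-space-vs-homogeneous-space}) with the \emph{fibre} $\tilde{G}'\hcoker\tilde{D}$ of the sequence \eqref{eq:FibSeq}, not with $\Gamma\hcoker\tilde{D}$ itself. The fibre inclusion points \emph{into} $\Gamma\hcoker\tilde{D}$, the wrong way round for your $\Phi$. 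One can try to reverse it rationally using the rational triviality of $\Gamma\hcoker\tilde{G}'$, but this produces at best a rational map in a range of degrees whose $\Gamma$-equivariance is not a priori clear (there is no actual map of spaces to check equivariance of), so the step where you ``produce $\Phi$ and show it is injective on rational homotopy groups'' is unsupported. The paper sidesteps the need for any such map by working purely with cohomology: Lemma \ref{lem:inj-Gamma-rep} uses the Leray--Serre spectral sequence of \eqref{eq:FibSeq} and the finiteness of $\pi_*(\Gamma\hcoker\tilde{G}')$ in the relevant range to get an \emph{injection} $H^q(\Gamma\hcoker\tilde{D};\bQ)\hookrightarrow H^q(\tilde{G}'\hcoker\tilde{D};\bQ)$ of $\pi_0(\vert D_\bullet\vert)$-modules, while Lemma \ref{lemma:rationally-simple} supplies a \emph{surjection} $H^q(\bN(W,\partial W)_0;\bQ)\twoheadrightarrow H^q(\tilde{G}'\hcoker\tilde{D};\bQ)$. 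Since arithmetic representations are closed under subrepresentations and quotients with no increase of degree (Lemma \ref{lemma-properties-artihmetic-reps}), the conclusion follows from Lemma \ref{cohomology-space-normal-invariants} without ever building a map out of $\Gamma\hcoker\tilde{D}$.

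A secondary issue: your claim that equivariance of $\Phi$ ``follows from functoriality of Quinn's sequence under self-homotopy-equivalences'' glosses over a genuine difficulty that the paper flags explicitly. The standard models of $\eta$ in Quinn's fibration are \emph{not} $\Diff(M)^\delta$-equivariant on the nose (the constructions make non-natural choices), which is why the paper introduces the auxiliary space $\widetilde{\bS}(M,\partial M)'$ and a zig-zag, and likewise works around the non-equivariance of the identification $\bN\simeq\map_*(M/\partial M,G/\Ort)$ by the basepoint-change trick rather than asserting an equivariant equivalence. You also try to prove more than is needed — that $(\Gamma\hcoker\tilde{D})_\bQ$ is an equivariant product of Eilenberg--MacLane spaces, which is essentially the finer Berglund--Madsen structure result recalled in Section \ref{sec:InvThy}; for the arithmeticity statement at hand, the sub-quotient argument is both sufficient and avoids the delicate parts of that stronger claim.
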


There is a fibration sequence
\begin{equation}\label{eq:FibSeq}
\tilde{G}' \hcoker \tilde{D} \overset{i}\lra \Gamma \hcoker \tilde{D} \overset{p}\lra \Gamma \hcoker \tilde{G}'
\end{equation}
and we claim

\begin{lemma}\label{lem:inj-Gamma-rep}
For $g \geq 2$ and $q \leq (n-1)$, the map $i^*: H^q(\Gamma \hcoker \tilde{D} ;\bQ) \to H^q(\tilde{G}' \hcoker \tilde{D};\bQ)$ is an injective map of $\pi_0(\vert D_\bullet\vert)$-modules.
\end{lemma}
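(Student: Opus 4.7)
The strategy is to analyse the Leray--Serre spectral sequence
$$E_2^{p,q} = H^p\bigl(\Gamma \hcoker \tilde{G}'; \mathcal{H}^q(\tilde{G}' \hcoker \tilde{D};\bQ)\bigr) \Longrightarrow H^{p+q}(\Gamma \hcoker \tilde{D};\bQ)$$
of the fibration \eqref{eq:FibSeq} with rational coefficients. Because all constructions in sight are functorial in the ambient $\tilde D$-action by (left) conjugation on $B\tilde D$, the spectral sequence carries a natural action of $\pi_0(|D_\bullet|)$, and the edge homomorphism in total degree $q$ is exactly $i^*$. Injectivity of $i^*$ in degree $q$ as a $\pi_0(|D_\bullet|)$-module map therefore amounts to showing that $E_\infty^{p,q-p}=0$ for all $p\ge 1$, and for this it suffices to verify that $H^p(\Gamma \hcoker \tilde{G}'; A) = 0$ for every $1 \leq p \leq q \leq n-1$ and every rational local system $A$ on the base.

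The main task is then to establish that $\Gamma \hcoker \tilde{G}'$ is rationally $(n-1)$-connected. Using that $\Gamma$ is discrete and that $\pi_0(\tilde{G}')$ surjects onto $\Gamma$ (coming from the surjectivity of the mapping class group onto $\Gamma$ as in Proposition \ref{torelli-space--connected}), the long exact sequence of homotopy groups shows that $\Gamma \hcoker \tilde{G}'$ is connected, with $\pi_1 \cong \ker(\pi_0(\tilde{G}') \to \Gamma)$ and $\pi_k \cong \pi_{k-1}(\tilde{G}')$ for $k \geq 2$. Consequently the problem reduces to two subclaims: (a) the block-homotopy Torelli kernel $\ker(\pi_0(\tilde{G}') \to \Gamma)$ is finite (hence rationally trivial), and (b) $\pi_j(\tilde{G}') \otimes \bQ = 0$ for $1 \leq j \leq n-2$.

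Both vanishing statements are instances of the rational-homotopy-theoretic description, due to Berglund--Madsen, of the block self-equivalence monoid in terms of positive-degree derivations of a Sullivan model of $W_{g,1}^{2n}$. Since the rational cohomology of $W_{g,1}^{2n}$ is concentrated in degrees $0$ and $n$, a minimal Sullivan model has its first nontrivial generators in degree $n$, so no derivations of negative degree $-1, \dots, -(n-2)$ can act nontrivially on these generators, and the higher-degree generators arising from relations enter the picture only above the range we care about. This produces the desired vanishing in (b); the surgery-theoretic description of $\blockaut_\partial(W_{g,1}^{2n})$ similarly controls the component group and yields (a).

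The main obstacle is thus the rational homotopy computation underlying (a) and (b); this is the substantive input supplied by the Berglund--Madsen machinery, and once it is in place the spectral sequence argument above is formal, with the $\pi_0(|D_\bullet|)$-equivariance of the edge map delivering the module statement.
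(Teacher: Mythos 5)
Your proposal follows essentially the same route as the paper: exhibit $i^*$ as the edge homomorphism of the Leray--Serre spectral sequence for \eqref{eq:FibSeq}, and reduce injectivity in degrees $\leq n-1$ to the statement that the base $\Gamma\hcoker\tilde{G}'$ is connected with $\pi_1$ finite and $\pi_k \otimes \bQ = 0$ for $2\leq k\leq n-1$, which in turn reduces (via the long exact sequence of the fibration $\Gamma\hcoker\tilde{G}'\to B\tilde{G}'\to B\Gamma$) to the finiteness of $\ker(\pi_0(\tilde{G}')\to\Gamma)$ and the finiteness of $\pi_j(\tilde{G}')$ for $1\leq j\leq n-2$. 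The paper supplies these two facts by directly citing \cite[Theorems 2.12 and 2.10]{BM}, whereas you replace those citations by an informal sketch of why Berglund--Madsen's derivation-Lie-algebra description of the block self-equivalences forces the vanishing. That sketch glosses over the substantive points of the cited theorems: the relative boundary condition and the block structure on $\blockaut_\partial$, the higher-degree generators in the minimal model, and in particular the hypothesis $g\geq 2$ which the paper flags as needed for \cite[Theorem 2.10]{BM} and which does not appear in your degree-counting; your justification of claim (a) (finiteness of the block Torelli component group, which is \cite[Theorem 2.12]{BM}) is a placeholder rather than an argument. None of this changes the strategy, which coincides with the paper's; be aware that the $\pi_0(|D_\bullet|)$-equivariance is cleanest to see as the paper does, from the $\pi_0(|\tilde{G}'_\bullet|)$-equivariance of the fibre inclusion in the homotopy category (via the Moore loop space action on the homotopy fibre), rather than from an asserted conjugation action on $B\tilde D$.
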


\begin{proof}
The map $i$ is $\pi_0(\vert\tilde{G}'_\bullet\vert)$-equivariant in the homotopy category, so in particular $\pi_0(\vert D_\bullet\vert)$-equivariant in the homotopy category, and we deduce that $i^*$ is a map of $\pi_0(\vert D_\bullet\vert)$-modules. Next, we claim that $ \Gamma \hcoker \tilde{G}'$ is path-connected and has finite homotopy groups in degrees $* \leq (n-1)$, which follows from computations by Berglund and Madsen. More precisely, the fibre sequence $ \Gamma \hcoker \tilde{G}' \to B \tilde{G}' \to B \Gamma$ induces exact sequences 
$$\pi_2 (B \Gamma )=0 \lra \pi_1 (\Gamma \hcoker \tilde{G}') \lra \pi_0 (\tilde{G}') \lra \Gamma \lra \pi_0(\Gamma \hcoker \tilde{G}') \lra \pi_0( B \tilde{G}' )=*.$$
By Proposition \ref{torelli-space--connected} and the definition of $\tilde{G}'$ the map $\pi_0 (\tilde{G}') \to \Gamma$ is surjective, and by \cite[Theorem 2.12]{BM} it has finite kernel, which proves the claim in degrees $* \leq 1$. That the homotopy groups $\pi_* (\Gamma \hcoker \tilde{G}')$ are finite for $ 2 \leq * \leq (n-1)$ follows from \cite[Theorem 2.10]{BM} (as long as $g \geq 2$). By the Leray--Serre spectral sequence for \eqref{eq:FibSeq} we find that the map
$$H^*(\Gamma \hcoker \tilde{D} ;\bQ) \lra H^*(\tilde{G}' \hcoker \tilde{D};\bQ)^{\pi_1(\Gamma \hcoker \tilde{G}')}$$
is an isomorphism for $* \leq(n-1)$, and in particular the map $i^*$ is injective in this range of degrees.
\end{proof}

Note that the natural map $\tilde{G}' \hcoker \tilde{D} \to \tilde{G} \hcoker \tilde{D}$ is simply the inclusion of the basepoint component, so we also denote it by $(\tilde{G}' \hcoker \tilde{D})_0$. In order to study this space we will use surgery theory in the form of Quinn's surgery fibration sequence, which we now briefly review. 

\vspace{2ex}

\emph{Assume that $M$ is simply-connected and of dimension $d \geq 5$.} The surgery fibration (see \cite{Quinn}, \cite{Nicas}) has the form
\begin{equation}\label{quinnsfibration}
\widetilde{\bS}^{} (M, \partial M) \stackrel{\eta}{\lra} \bN(M, \partial M) \stackrel{\sigma}{\lra} \bL (M).
\end{equation}
The \emph{$L$-theory space} $\bL(M)$ has homotopy groups given by Wall's surgery obstruction groups, which as we have supposed $M$ is simply-connected are 
\begin{equation*}
\pi_k (\bL(M)) = L_{k+d} (\bZ \pi_1 (M)) =
\begin{cases}
\bZ &  k + d \equiv 0 \pmod 4\\
\bZ/2 & k+d \equiv 2 \pmod 4\\
 0 & k+d  \equiv 1 \pmod 2.
\end{cases}
\end{equation*}

The \emph{block structure space} $\widetilde{\bS}^{}(M, \partial M)$ is a classifying space for smooth block bundles equipped with a fibre homotopy equivalence to the trivial $M$-bundle. Precisely, let us write $\bR^n_+ = [0,\infty) \times \bR^{n-1}$ and define $\widetilde{\cS}^{}(M, \partial M ; n)_\bullet$ to be the semi-simplicial set with $p$-simplices given by pairs
\begin{enumerate}[(i)]
\item\label{it:S:1} $E \subset  \Delta^p \times \bR^n_+$ a $(d+p)$-dimensional smooth manifold (with corners), such that the projection $\pi : E \to \Delta^p$ is transverse to all faces. The boundary of $E$ is decomposed into $\pi^{-1}(\partial \Delta^p)$ and the closure of its complement, $\partial^v E $ which we require to be $E \cap (\Delta^p \times \{0\} \times \bR^{n-1})$. If $\sigma \subset \Delta^p$ is a face we write $\partial_\sigma E := \pi^{-1}(\sigma)$.

\item\label{it:S:3} A map $\phi : E \to \Delta^p \times M$ such that $\phi\vert_{\partial^v E} : \partial^v E \to \Delta^p \times \partial M$ is a diffeomorphism, and which for every face $\sigma \subset \Delta^p$ restricts to a homotopy equivalence from $\partial_\sigma E$ into $\sigma \times M$.
\end{enumerate}

We remark explicitly that the projection $\pi : E \to \Delta^p$ and the first component of $\phi$ are \emph{not} required to coincide. The face maps are given by restricting the data to codimension 1 faces of $\Delta^p$. There are semi-simplicial inclusions $\widetilde{\cS}^{}(M, \partial M ; n)_\bullet \to \widetilde{\cS}^{}(M, \partial M ; n+1)_\bullet$ given by considering a submanifold $E$ of $\Delta^p \times \bR^{n}$ as lying in $\Delta^p \times \bR^{n+1}$, and we write $\widetilde{\cS}^{}(M, \partial M)_\bullet$ for the colimit. Then $\widetilde{\bS}^{}(M, \partial M)$ is defined to be the geometric realisation $\vert \widetilde{\cS}^{}(M, \partial M)_\bullet \vert$. The block structure space $\widetilde{\bS}^{}(M, \partial M)$ has a left action by the (discrete) group $\Diff(M)^\delta$ of diffeomorphisms of $M$, where a diffeomorphism $\rho$ acts on $p$-simplices by $(E, \phi) \mapsto (E, (\mathrm{Id}_{\Delta^p} \times \rho) \circ \phi)$.

\begin{lemma}\label{structure-space-vs-homogeneous-space}
There is a map
$$c : \widetilde{\bS}^{}(M, \partial M)_0 \lra (\blockaut(M)/\blockdiff(M))_0$$
between basepoint components which is a homotopy equivalence, and is $\Diff(M)^\delta$-equivariant.
\end{lemma}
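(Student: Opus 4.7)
The plan is to define $c$ by ``rectifying'' the smooth block bundle $E\to \Delta^p$ to the trivial bundle, and then to verify that $c$ is a weak equivalence by recognising both sides as different models for the homotopy fibre of the natural inclusion $B\blockdiff_\partial(M) \to B\blockaut_\partial(M)$.

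To construct $c$, start with a $p$-simplex $(E, \phi)$ of $\widetilde{\cS}^{}(M, \partial M ; n)_\bullet$. The projection $\pi : E \to \Delta^p$ is a smooth block bundle with fibre $M$, together with an identification of $\partial^v E$ with $\Delta^p \times \partial M$ coming from $\phi$. Since $\Delta^p$ is contractible, an inductive argument over its faces yields a block trivialisation $\psi : \Delta^p \times M \to E$ which restricts to the given identification on $\partial^v E$. The set of such $\psi$ is a torsor for $\blockdiff_\partial(M)_p$ acting by precomposition, so the composite $\phi \circ \psi$ determines a well-defined class $c(E,\phi) := [\phi \circ \psi] \in (\blockaut_\partial(M)/\blockdiff_\partial(M))_p$. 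Compatibility with face maps and $\Diff(M)^\delta$-equivariance (where $\rho$ acts on $\widetilde{\bS}$ by post-composing $\phi$ with $\mathrm{id}_{\Delta^p}\times\rho$ and on the quotient by left translation) are immediate from this formula.

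To prove that $c$ is a weak equivalence on basepoint components, I would identify both sides as homotopy fibres of the map $B\blockdiff_\partial(M) \to B\blockaut_\partial(M)$ and check that $c$ implements this identification. On the right, the principal fibration $\blockdiff_\partial(M) \to \blockaut_\partial(M) \to \blockaut_\partial(M)/\blockdiff_\partial(M)$ of semi-simplicial objects realises to a homotopy fibre sequence, and delooping gives $(\blockaut_\partial(M)/\blockdiff_\partial(M))_0 \simeq \hofib(B\blockdiff_\partial(M) \to B\blockaut_\partial(M))$. On the left, the map ``forget $\phi$'' sending $(E,\phi)$ to the underlying smooth block bundle $E \to \Delta^p$ defines a map $\widetilde{\bS}^{}(M,\partial M) \to B\blockdiff_\partial(M)$, and $\phi$ itself provides a canonical nullhomotopy of the further composite into $B\blockaut_\partial(M)$. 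The resulting map to $\hofib(B\blockdiff_\partial(M) \to B\blockaut_\partial(M))$ can be checked to be an equivalence on basepoint components by comparing universal properties: a point in the homotopy fibre is precisely a block bundle equipped with a fibre homotopy trivialisation, which is exactly the data of a point of $\widetilde{\bS}^{}(M,\partial M)$.

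The main technical obstacle is this final comparison, specifically showing that the ``embedded'' model $\widetilde{\bS}^{}(M,\partial M)_0$ agrees with the ``abstract'' homotopy fibre once the ambient dimension is stabilised. Concretely, one needs the space of smooth embeddings of a given block bundle $E \to \Delta^p$ into $\Delta^p \times \bR^n_+$ extending the prescribed embedding on $\partial^v E$ to become contractible in the colimit $n \to \infty$; this is a standard stabilisation statement for embedding spaces (a parametrised version of Whitney's theorem) and ensures that the extra geometric data present in $\widetilde{\cS}(M,\partial M)_\bullet$ but absent from $\blockaut_\partial(M)/\blockdiff_\partial(M)$ does not contribute to the homotopy type. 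Combined with the identifications of both sides as homotopy fibres, this establishes that $c$ is a weak equivalence on basepoint components.
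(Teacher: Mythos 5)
Your construction of $c$ has a genuine gap at the very first step. You write that ``the projection $\pi : E \to \Delta^p$ is a smooth block bundle with fibre $M$,'' and then use contractibility of $\Delta^p$ to produce the block trivialisation $\psi$. But this is not what the definition of $\widetilde{\cS}(M,\partial M)_\bullet$ gives you: condition (\ref{it:S:1}) only asks that $\pi$ be transverse to the faces of $\Delta^p$, so that each $\partial_\sigma E := \pi^{-1}(\sigma)$ is a smooth manifold; there is no requirement that $E$ be isomorphic to a product over each face, nor even that $\pi^{-1}(\mathrm{pt})$ be diffeomorphic to $M$. What the data $(E,\phi)$ gives you over each face $\sigma$ is a manifold homotopy equivalent (rel boundary) to $\sigma \times M$, and nothing more. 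Indeed, $\pi_0$ of this semi-simplicial set is the structure set, which records manifolds homotopy equivalent to $M$ up to $h$-cobordism, so in general fibres are not diffeomorphic to $M$. The existence of a block diffeomorphism $\psi : E \cong \Delta^p \times M$ over the faces is precisely where the $h$-cobordism theorem enters (requiring $M$ simply-connected of dimension $\geq 5$), and it only applies because you have restricted to the basepoint component $\widetilde{\bS}(M,\partial M)_0$. The paper's proof invokes this explicitly; yours replaces it with a statement (``block bundle over a simplex is trivial'') which is true for block bundles but is not what you have here.

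Once the existence of $\psi$ is justified correctly, the rest of your construction of $c$ — that the set of choices of $\psi$ is a $\blockdiff_\partial(M)_p$-torsor, so $[\phi\circ\psi^{-1}]$ is well-defined in the quotient, and that simpliciality and $\Diff(M)^\delta$-equivariance follow — matches the paper. For the statement that $c$ is an equivalence, the paper simply asserts this is ``a further easy consequence of the $h$-cobordism theorem'' (essentially: surjectivity on homotopy because every element of $\blockaut_\partial(M)_p$ arises from the trivial block bundle with a twisted reference map, injectivity by the same trivialisation argument applied to nullhomotopies). Your route via identifying both sides as models for $\hofib(B\blockdiff_\partial(M) \to B\blockaut_\partial(M))$ is a legitimate alternative; the stabilisation-of-embeddings point you flag is genuinely needed to compare the embedded model $\widetilde{\bS}(M,\partial M)$ with an abstract fibre, and is standard. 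But note that the ``abstract'' identification of the left-hand side as a homotopy fibre again secretly relies on the $h$-cobordism theorem in the same place: without it, the fibre over a point of $B\blockdiff_\partial(M)$ classifies manifolds block-diffeomorphic to $M$, which need not exhaust what $\widetilde{\bS}(M,\partial M)_0$ parametrises.
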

\begin{proof}
We define the map $c$ simplicially as follows: for $(E, \phi) \in \widetilde{\cS}^{}(M, \partial M)_p$ in the component of the basepoint, it follows from the $h$-cobordism theorem that there exists a diffeomorphism $\psi: E \cong \Delta^p \times M$ (which for each face $\sigma \subset \Delta^p$ restricts to a diffeomorphism of $\partial_\sigma E$ to $\sigma \times M$), and then $\phi\circ\psi^{-1} : \Delta^p \times M \to \Delta^p \times M$ is an element of $\blockaut(M)_p$. However, it depends on our choice of $\psi$: making another choice changes the element we get by right multiplication with a diffeomorphism $\Delta^p \times M \cong \Delta^p \times M$ (which for each face $\sigma$ restricts to a diffeomorphism of $\sigma \times M$), so an element of $\blockdiff(M)_p$. Thus we get a well-defined $p$-simplex of $\blockaut(M)/\blockdiff(M)$, and it is easy to see that this construction defines a simplicial map. The map described lands in the basepoint component, and it is a further easy consequence of the $h$-cobordism theorem that it is a homotopy 
equivalence to this path component. It is furthermore clear from the definition that $c$ is $\Diff(M)^\delta$-equivariant.
\end{proof}

The space of \emph{normal invariants} $\bN(M, \partial M)$ is a classifying space for degree 1 normal maps from a smooth manifold to $M$. Precisely, let ${\cN}(M, \partial M ; n)_\bullet$ be the semi-simplicial set with $p$-simplices given by tuples consisting of a manifold $E$ as in (\ref{it:S:1}) above, as well as
\begin{enumerate}[(i)]
\setcounter{enumi}{1}
\item\label{it:N:3} A map $\phi : E \to \Delta^p \times M$ such that $\phi\vert_{\partial^v E} : \partial^v E \to \Delta^p \times \partial M$ is a diffeomorphism, and which for every face $\sigma \subset \Delta^p$ restricts to a map from $\partial_\sigma E$ into $\sigma \times M$ which has degree 1 (in homology relative to the boundary).

\item\label{it:N:4} An $(n-d)$-dimensional vector bundle $\xi \to \Delta^p \times M$ and a vector bundle map $\hat{\phi} : \nu_E \to \xi$ covering $\phi$, where $\nu_E \to E$ is the normal bundle of $E \subset \Delta^p \times \bR^{n}$.
\end{enumerate}
The face maps are given by restricting the data to codimension 1 faces of $\Delta^p$. There are semi-simplicial inclusions ${\cN}^{}(M, \partial M ; n)_\bullet \to {\cN}^{}(M, \partial M ; n+1)_\bullet$ given by $(E, \phi, \xi, \hat{\phi}) \mapsto (E, \phi, \xi \oplus \epsilon^1, \hat{\phi} \oplus \epsilon^1)$. We write ${\cN}^{}(M, \partial M)_\bullet$ for the colimit, and ${\bN}^{}(M, \partial M)$ is defined to be the geometric realisation $\vert {\cN}^{}(M, \partial M)_\bullet \vert$.

The space of normal invariants has a left action by the (discrete) group $\Diff(M)^\delta$ of diffeomorphisms of $M$, where a diffeomorphism $\rho$ acts on $p$-simplices by 
$$(E, \phi, \xi, \hat{\phi}) \longmapsto (E, (\mathrm{Id}_{\Delta^p} \times \rho)\circ \phi, (\rho^{-1})^*\xi, \hat{\rho}\circ \hat{\phi}),$$
where $\hat{\rho} : \xi \to (\rho^{-1})^*\xi$ is the canonical bundle map covering $\rho$. In the models just described it does not seem to be possible to give a simplicial model of the map $\eta$ which is $\Diff(M)^\delta$-equivariant (the construction of $\eta$ in \cite{Quinn} and \cite{Nicas} make choices which are not natural in this sense). However, it is possible to construct an auxiliary space $\widetilde{\bS}^{}(M, \partial M)'$ and maps
$$\widetilde{\bS}^{}(M, \partial M) \overset{\sim}\longleftarrow \widetilde{\bS}^{}(M, \partial M)' \overset{\eta'}\lra \bN(M, \partial M)$$
which are $\Diff(M)^\delta$-equivariant, and this is an adequate substitute. Briefly, a simplex in $\widetilde{\bS}^{}(M, \partial M)'$ should in addition have a choice of datum (\ref{it:N:4}), that is a vector bundle $\xi \to \Delta^p \times M$ and bundle map $\hat{\phi} : \nu_E \to \xi$ covering $\phi$. Up to homotopy this is no further data, because $\xi$ must be isomorphic to  $g^*(\nu_E)$ where $g$ is a homotopy inverse to $\phi$, and $\hat{\phi}$ must be equivalent to the bundle map induced from the map $\nu_E \to \phi^*g^*\nu_E$ induced by a homotopy $g \circ \phi \leadsto id$. The map $\eta'$ is then simply the inclusion of a subspace.

The space $\bN(M, \partial M)$ is homotopy equivalent to $\mathrm{map}_*(M/\partial M, \mathrm{G}/\Ort)$, see e.g. \cite{Quinn}. Here $\mathrm{G} = \colim_{k \to \infty} \mathrm{G}(k)$ and $\mathrm{G}(k)$ denotes the grouplike topological monoid of self homotopy equivalences of $S^{k-1}$; there is a homomorphism $\Ort(k) \to \mathrm{G}(k)$ by the action of the orthogonal group on the unit sphere, and in the colimit this gives a homomorphism $\Ort \to \mathrm{G}$. The space $\mathrm{G}/\Ort$ is by definition the homotopy fibre of the induced map $B\Ort \to B\mathrm{G}$ on classifying spaces.

\begin{remark}
We believe that there is a zig-zag of $\Diff(M)^\delta$-equivariant homotopy equivalences between $\bN(M, \partial M)$ and the mapping space $\map_*(M/\partial M, \mathrm{G}/\Ort)$, on which $\rho \in \Diff(M)^\delta$ acts by $- \circ \rho^{-1}$, but giving a detailed proof of this would lead us too far afield. Instead, in the following we use a trick which is implicit in \cite{BM}, though we try to explain it in more detail. 
\end{remark}

We have a homotopy equivalence $\kappa: \bN(M, \partial M) \to \mathrm{map}_*(M/\partial M, \mathrm{G}/\Ort)$, and at the level of homotopy groups there is a good description of what this map does: it takes a degree 1 normal map to $D^k \times M$ relative boundary, and associates to it a vector bundle with a fibre homotopy equivalence from its sphere bundle to the Spivak normal fibration of $D^k \times M$ (suitably trivialised over the boundary of $D^k \times M$). For a $\rho \in \Diff(M)^\delta$ we obtain the following diagram
\begin{equation*}
\xymatrix{
\pi_k(\bN(M, \partial M), \mathrm{Id}_M) \ar[d]^-\rho \ar[r]^-\sim &  \pi_k(\mathrm{map}_*(M/\partial M, \mathrm{G}/\Ort) , *) \ar[dd]^-{- \circ \rho^{-1}}\\
\pi_k(\bN(M, \partial M), \rho) \ar[d]^-{\sim}& \\
\pi_k(\bN(M, \partial M), \mathrm{Id}_M) \ar[r]^-\sim& \pi_k(\mathrm{map}_*(M/\partial M, \mathrm{G}/\Ort) , *)
}
\end{equation*}
In this diagram the lower left-hand map is a change of basepoint isomorphism: the space $\bN(M, \partial M)$ is simple and so this is well-defined. The simplicity of $\bN(M, \partial M)$ is because it is homotopy equivalent to $\mathrm{map}_*(M/\partial M, \mathrm{G}/\Ort)$ and because $\mathrm{G}/\Ort$ is an infinite loop space. Furthermore, this diagram can be shown to commute (the fundamental calculation is a formula for the normal invariant of the composition of two homotopy equivalences, cf.\ \cite[Lemma 3.3]{BM}).

Hence $\Diff(M)^\delta$ acts on $\pi_k(\bN(M, \partial M), \mathrm{Id}_M)$, via the geometric action on the space level followed by translation of loops back to the basepoint $\mathrm{Id}_M$, and this $\Diff(M)^\delta$-module is identified with $\pi_k(\mathrm{map}_*(M/\partial M, \mathrm{G}/\Ort) , *)$ with the left action whereby $\rho$ acts by precomposition with $\rho^{-1}$.

\vspace{2ex}

Let us now return to the manifolds $W_{g,1}^{2n}$, and denote them by $W$ for simplicity.

\begin{lemma}\label{cohomology-space-normal-invariants}
The action of the group $D^\delta := \Diff(W)^\delta$ on $H^q (\bN(W, \partial W))_0;\bQ)$ is through the homomorphism $D^\delta \to \Gamma$, and as a $\Gamma$-module it is arithmetic of degree $\leq q$.
\end{lemma}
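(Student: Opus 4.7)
The plan is to identify $\bN(W,\partial W)\simeq \map_*(W/\partial W,\mathrm{G}/\Ort)$ via Quinn, then observe that $W/\partial W \simeq W_g$ as pointed spaces. Since $\mathrm{G}/\Ort$ is an infinite loop space, so is the mapping space, and its basepoint component $\bN(W,\partial W)_0$ is a connected rational infinite loop space. The reduced rational cohomology $\tilde H^*(W_g;\bQ)$ is concentrated in degree $n$, where it is the $\Gamma$-module $V := H_n(W_g;\bQ)$ (the $\bQ$-form of the defining representation of the ambient complex group), and in degree $2n$, where it is the trivial representation $\bQ$. Together with the fact that $\pi_*(\mathrm{G}/\Ort)\otimes\bQ$ is $\bQ$ in each degree $4j$ ($j\geq 1$) and zero elsewhere, the standard infinite loop space computation gives
\begin{equation*}
\pi_k(\bN(W,\partial W)_0)\otimes\bQ \;\cong\; \bigoplus_{j\geq 1}\tilde H^{4j-k}(W_g;\bQ),
\end{equation*}
with each summand isomorphic to $V$ or to $\bQ$. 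By the action computation recalled just before the lemma, the induced $D^\delta$-action on these groups is that of $\rho^{-1}$ on $\tilde H^*(W_g;\bQ)$, which factors through the defining homomorphism $D^\delta\to\Gamma$. Each rational homotopy group of $\bN(W,\partial W)_0$ is therefore an arithmetic $\Gamma$-representation of degree $\leq 1$.

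To pass to cohomology, I would use that a connected rational infinite loop space $Y$ is (non-canonically) equivalent to a product of rational Eilenberg--MacLane spaces, so that $H^*(Y;\bQ)$ is naturally isomorphic, as a functor on such spaces, to the free graded-commutative algebra on $\bigoplus_k (\pi_kY\otimes\bQ)^\vee[-k]$. Applying this to $\bN(W,\partial W)_0$, the degree $q$ part is a direct sum of tensor products of symmetric and exterior powers of the duals $(\pi_k\otimes\bQ)^\vee$, and each such summand involves at most $q$ tensor factors, since every cohomological generator sits in positive degree $\geq 1$. Using the self-duality $V^\vee\cong V$ (via the (anti)symmetric form on $V$) together with the closure properties of arithmetic representations from Lemma \ref{lemma-properties-artihmetic-reps}, each summand embeds $\Gamma$-equivariantly into some $V^{\otimes a}$ with $a\leq q$, and is therefore arithmetic of degree $\leq q$. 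Functoriality ensures that the full $D^\delta$-action on $H^q(\bN(W,\partial W)_0;\bQ)$ factors through $\Gamma$.

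I expect the main technical point to be justifying the naturality of the cohomology-from-homotopy description: the product splitting $Y_\bQ\simeq\prod_k K(\pi_kY\otimes\bQ,k)$ is not itself canonical, but the rationalisation and the Postnikov tower are functorial, and the vanishing of rational $k$-invariants of an infinite loop space forces the Postnikov spectral sequence for $H^*$ to collapse at $E_2$ and assemble canonically into the claimed free graded-commutative algebra, compatibly with the $D^\delta$-action. Granted this, the remainder is straightforward bookkeeping with the closure properties of arithmetic representations.
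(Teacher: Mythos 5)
Your proposal follows essentially the same route as the paper: identify $\bN(W,\partial W)_0$ with the basepoint component of $\map_*(W/\partial W,\mathrm{G}/\Ort)$, note it is a rational infinite loop space so that its cohomology is the free graded-commutative algebra on the duals of its rational homotopy groups, compute those homotopy groups as copies of $V$ and $\bQ$, observe the $D^\delta$-action factors through $\Gamma$, and read off the degree bound $\leq q$ from the tensor-length of a degree-$q$ monomial. The one place you go further than the paper is in flagging and justifying the naturality of the ``cohomology is free on duals of homotopy'' identification (the paper simply asserts this); that care is correct and worth having.
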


\begin{proof}
Compare \cite[\S 3.3]{BM}. $\mathrm{G}/\Ort$ is an infinite loop space, so $\map_* (W/\partial W;\mathrm{G}/\Ort)_0$ is too. The rational cohomology of $\map_* (W/\partial W;\mathrm{G}/\Ort)_0$ is thus the symmetric algebra on the dual of the rational homotopy, so it follows that the same is true of $\bN(W, \partial W))_0$. Thus to know the representations $H^q(\bN(W, \partial W))_0;\bQ)$ we merely need to know the representations $\pi_q(\bN(W, \partial W))_0, \mathrm{Id}_W)\otimes \bQ$, which by the above discussion is isomorphic as a representation to $\pi_q(\map_* (W/\partial W;\mathrm{G}/\Ort)_0, *)\otimes \bQ$, so it suffices to calculate these. 

$\mathrm{G}/\Ort$ is connected and has rational homotopy $\bQ$ in positive degrees divisible by $4$ and $0$ otherwise. Therefore $\map_* (W/\partial W;\mathrm{G}/\Ort)_0$ has rational homotopy in positive degrees given by, with the abbreviations $V:= H^n (W;\bQ)$ and $S_k:=\pi_k (\mathrm{G}/\Ort)\otimes \bQ$,
$$\pi_k (\map_* (W/\partial W;\mathrm{G}/\Ort)_0) \cong (V \otimes S_{k+n}) \oplus S_{k+2n}.$$
This isomorphism is $D^\delta$-equivariant, which shows that the $D^\delta$-action factors through $\Gamma$, and the arithmeticity statement and the estimation of the degree follows from this.
\end{proof}

\begin{lemma}\label{lemma:rationally-simple}
Let $X$ be a connected space, $Y$ a connected infinite loop space of finite type and $f:X \to Y$ be a map such that $f$ is injective on rational homotopy in degrees above 1 and has finite kernel on $\pi_1$. Then $f^* :H^* (Y,\bQ) \to H^* (X; \bQ)$ is surjective.
\end{lemma}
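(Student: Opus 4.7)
The plan is to reduce the statement to a comparison of rational H-spaces, and then conclude by duality.

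First, the two hypotheses together imply that $\pi_n(f) \otimes \bQ$ is injective for every $n \geq 1$ (the $n = 1$ case because a finite kernel becomes trivial after $\otimes \bQ$). I would then deduce that the rational Hurewicz map $h_n : \pi_n(X) \otimes \bQ \to H_n(X; \bQ)$ is injective for every $n \geq 2$: after rationalizing $Y$ (which is nilpotent and of finite type, so $Y_\bQ \simeq \prod_n K(V_n, n)$ with $V_n = \pi_n(Y) \otimes \bQ$), the map $\pi_n(f) \otimes \bQ$ factors as $\pi_n(X) \otimes \bQ \xrightarrow{h_n} H_n(X; \bQ) \xrightarrow{c_n} V_n$ through the cohomology class $c_n \in H^n(X; V_n)$ classifying the $n$-th component of $f_\bQ$, so injectivity of the composite forces injectivity of $h_n$.

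Next I invoke the following rational-homotopy-theoretic fact: a nilpotent space $X$ of finite rational type whose rational Hurewicz map is injective in all degrees $\geq 2$ is rationally a product of Eilenberg--MacLane spaces, i.e.\ is a rational H-space. This is a direct consequence of Sullivan minimal model theory: in the minimal model $(\Lambda W, d)$ of $X$, injective rational Hurewicz says every generator $w \in W$ admits a decomposable correction $\xi \in \Lambda^{\geq 2} W$ such that $w + \xi$ is a cocycle. Setting $W' := \{w + \xi : w \in W\} \subset \Lambda W$, an inductive argument on degree shows $\Lambda W' = \Lambda W$ as subalgebras of $\Lambda W$, and since $d$ vanishes on $W'$ by construction and is a derivation, $d = 0$ on all of $\Lambda W$; thus $X$ is rationally an H-space.

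Finally, knowing both $X$ and $Y$ are rational H-spaces of finite type, their rational cohomology rings are free graded commutative algebras on graded vector spaces $W^\vee = \pi_*(X)^\vee \otimes \bQ$ and $V^\vee = \pi_*(Y)^\vee \otimes \bQ$ respectively. For each primitive generator $v^\vee \in V^\vee$, its pullback $f^*(v^\vee)$ decomposes as $p(v^\vee) + (\text{decomposable})$ with $p(v^\vee) \in W^\vee$, where $p$ is precisely the dual of $\pi_*(f) \otimes \bQ$, hence surjective. An induction on degree then shows every primitive of $H^*(X; \bQ)$ lies in the image of $f^*$ (subtract off decomposables, which lie in the image by the inductive hypothesis), and hence so does the whole free algebra $H^*(X; \bQ)$. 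The main obstacle is the second step --- deducing a rational H-space structure from injective rational Hurewicz --- which rests on the Sullivan minimal model argument above and requires $X$ to be at least nilpotent, a condition met in the applications of this lemma.
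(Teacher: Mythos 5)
Your argument and the paper's proof take genuinely different routes, and yours has a gap that the paper's route is specifically designed to avoid: you need $X$ to be nilpotent in order to invoke Sullivan minimal models and conclude that $X$ is rationally an H-space, but the lemma places no such hypothesis on $X$. You flag this yourself and assert that nilpotency ``is met in the applications'', but that is neither obvious nor checked: the lemma is applied to $X = \widetilde{\bS}(W,\partial W)_0$, and all the hypotheses yield is that $\pi_1(X)$ is finite-by-abelian (since $\pi_1(Y)$ is abelian, the commutator subgroup of $\pi_1(X)$ lies in the finite kernel of $\pi_1(f)$). A finite-by-abelian group need not be nilpotent, and, more to the point, there is no a priori reason for $\pi_1(X)$ to act nilpotently (or trivially) on $\pi_{\geq 2}(X)\otimes\bQ$ or on the rational homology of the universal cover. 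Without that you cannot form the minimal model of $X$ and the second step of your argument does not get off the ground. As a minor point, your first step also treats $\pi_1(X)\otimes\bQ$ as meaningful; for a possibly non-abelian, possibly non-nilpotent $\pi_1(X)$ this should be phrased in terms of the image in $\pi_1(Y)\otimes\bQ$.

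The paper's proof avoids assuming any such thing, and in fact \emph{derives} the rational simplicity of $X$ along the way. After rationalising $Y$ and then composing with a retraction onto the sub-Eilenberg--MacLane space spanned by $\operatorname{Im}(\pi_*f)$ (a reduction which makes the map of universal covers $\widetilde{X}\to\widetilde{Y}$ a rational equivalence without changing the hypotheses or the conclusion), it compares the Leray--Serre spectral sequences of $\widetilde{X}\to X\to B\pi_1(X)$ and $\widetilde{Y}\to Y\to B\pi_1(Y)$. Since $\widetilde{X}\to\widetilde{Y}$ is a $\pi_1(X)$-equivariant rational equivalence (with $\pi_1(X)$ acting on the target through $\pi_1(X)\to\pi_1(Y)$) and $Y$ is simple, $\pi_1(X)$ is forced to act trivially on $H^*(\widetilde{X};\bQ)$; the map $\pi_1(X)\to\pi_1(Y)$, a homomorphism with finite kernel to a $\bQ$-vector space whose image spans, induces an isomorphism on rational group cohomology; and the spectral sequence comparison then gives the result. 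Your final step (surjectivity of the dual of $\pi_*(f)\otimes\bQ$ onto indecomposables, then induction on degree) is clean, but it relies exactly on the free-commutative-algebra structure of $H^*(X;\bQ)$, i.e.\ on $X$ being a rational H-space, which is the unjustified part. If you want to salvage your approach you would first have to prove that $X$ is rationally simple by something like the paper's equivariance argument, at which point you might as well run the spectral sequence comparison directly.
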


We will now give the proof of Proposition \ref{block-arithmeticity-theorem}, and defer the proof of this (standard) lemma for a moment.

\begin{proof}[Proof of Proposition \ref{block-arithmeticity-theorem}]
We restrict the surgery fibration \eqref{quinnsfibration} to the unit components and obtain
$$ \widetilde{\bS}^{} (W,\partial W)_0 \overset{\eta}{\lra} \bN(W,\partial W)_0\overset{\sigma}{\lra} \bL(M)_0$$
because $\bL(M)_0$ is simply connected. By \cite[Lemma 3.4]{BM}, the map $\sigma$ is surjective on rational homotopy groups. Therefore $\eta$ is injective on rational homotopy groups in degrees $\geq 2$, and has finite kernel on $\pi_1$. The induced map on cohomology
$$\eta^* : H^q(\bN(W,\partial W)_0;\bQ) \lra H^q(\widetilde{\bS}^{} (W,\partial W)_0;\bQ)$$
is $D^\delta$-equivariant by our discussion, and we deduce from Lemma \ref{lemma:rationally-simple} that it is surjective. 
Thus the $D^\delta$-action on
$$H^q(\widetilde{\bS}^{} (W,\partial W)_0;\bQ) \stackrel{\text{Lemma \ref{structure-space-vs-homogeneous-space}}}{\cong} H^q((\tilde{G} \hcoker \tilde{D})_0;\bQ) \overset{\sim}\lra H^q(\tilde{G}' \hcoker \tilde{D};\bQ)$$
factors through $\Gamma$ and as such is an arithmetic representation of degree $\leq q$, by Lemma \ref{cohomology-space-normal-invariants}. By Lemma \ref{lem:inj-Gamma-rep} it follows that for $q \leq  (n-1)$ the $\Gamma$-representation $H^q(\Gamma \hcoker \tilde{D};\bQ)$ is arithmetic of degree $\leq q$, which proves the proposition.
\end{proof}

\begin{proof}[Proof of Lemma \ref{lemma:rationally-simple}]
As $Y$ is an infinite loop space, it has a rationalisation $Y_\bQ$ which is unambiguous. By composing $f$ with the rationalisation map $Y \to Y_{\bQ}$, the conclusion of the Lemma is unchanged. Since $Y$ has finite type, the map $Y \to Y_{\bQ}$ has finite kernel on all homotopy groups, and so the hypotheses are also unchanged (except that $Y$ is now of finite rational type). We may therefore assume that $Y$ is a rational infinite loop space, in particular, a generalised Eilenberg--MacLane space $K (W)$ where $W:=\pi_* (Y)\otimes \bQ$. Let $V \subset W$ be the subspace generated by the image of $\pi_* (f)$ and pick a splitting $W \to V$, which induces a map $q:K(W) \to K(V)$. The composition 
$$X \stackrel{f}{\lra} K(W) \stackrel{q}{\lra} K(V)$$
satisfies the assumptions of the lemma, has the additional property that the image $\pi_* f$ spans the rational homotopy of the target, and if $q \circ f$ is surjective in cohomology, then so is $f$. Thus we may in addition assume that the image of $\pi_*(f)$ spans the rational homotopy of $Y$ (and $\pi_1(f)$ still has finite kernel).

Consider the map of fibrations
\begin{equation*}
\xymatrix{
\widetilde{X} \ar[r] \ar[d] & \ar[d] \widetilde{Y} \\
X \ar[r]^{f} \ar[d] & Y \ar[d]\\
B \pi_1 (X) \ar[r] & B \pi_1 (Y).
}
\end{equation*}
The map of universal covers is a rational homotopy equivalence, and so $\pi_1(X)$ acts on the higher homotopy groups $\pi_i(X)\otimes \bQ \cong \pi_i(Y)\otimes \bQ$ through the homomorphism $\pi_1(X) \to \pi_1(Y)$. As $Y$ is an infinite loop space this action is trivial, so the space $X$ is ``rationally simple" in the sense that $\pi_1 (X)$ acts trivially on the higher rational homotopy groups, as well as on $H^*(\widetilde{X};\bQ)$. The map of Leray--Serre spectral sequences for these two fibrations on $E_2$-terms is thus
$$H^p(\pi_1(X);\bQ) \otimes H^q(\widetilde{X};\bQ) \lra H^p(\pi_1(Y);\bQ) \otimes H^q(\widetilde{Y};\bQ),$$
and the map $\pi_1 (X) \to \pi_1 (Y)$ is a homomorphism with finite kernel whose target is a rational vector space and whose image generates the target: thus it induces an isomorphism on rational cohomology. Hence we have an isomorphism of $E_2$-terms, and so get an isomorphism of $E_{\infty}$-terms, proving the lemma.
\end{proof}

Finally, we will complete the proof of Theorem \ref{arithmeticity-theorem}. Consider the fibration sequence
\begin{equation*}
\tilde{D} \hcoker D \lra \Gamma \hcoker D \overset{p}\lra \Gamma \hcoker \tilde{D},
\end{equation*}
where the map $p$ is $\Gamma$-equivariant. We will show that the fibre $\tilde{D} \hcoker D$ has trivial rational homology in the stable range (i.e.\ degrees $* \leq C_g^{2n}$). Since $\tilde{D}\hcoker D$ is obviously connected, the Leray--Serre spectral sequence shows that $H^* (\Gamma \hcoker \tilde{D};\bQ) \to H^* (\Gamma \hcoker D;\bQ)$ is an isomorphism in this stable range and so Theorem \ref{arithmeticity-theorem} follows from Proposition \ref{block-arithmeticity-theorem}.

\begin{theorem}\mbox{}
\begin{enumerate}[(i)]
\item The natural map $\blockdiff(D^{2n})/\Diff(D^{2n}) \to \blockdiff(W^{2n}_{g,1})/\Diff(W^{2n}_{g,1}) =\tilde{D} \hcoker D$ is $(2n-4)$-connected.
\item If $2n \geq \max\{2k+7,3k+4\}$ then $\pi_k(\blockdiff(D^{2n})/\Diff(D^{2n}))$ is finite.
\end{enumerate}
\end{theorem}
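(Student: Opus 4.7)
For part (i), the natural tool is Morlet's lemma of disjunction, which is mentioned as ingredient (v) in the introduction. I would write $W^{2n}_{g,1}$ as obtained from $D^{2n}$ by successive attachment of $2g$ handles of index $n$, and apply the disjunction lemma at each stage. The relevant principle is that the homotopy fibre of the map on quotients $\blockdiff_\partial(M)/\Diff_\partial(M) \to \blockdiff_\partial(M \cup H)/\Diff_\partial(M \cup H)$ induced by a handle attachment of index $n$ can be controlled by a space of embeddings with connectivity of order twice the codimension $n$. Since $W^{2n}_{g,1}$ is built from $D^{2n}$ by attaching codimension-$n$ handles, a standard bookkeeping argument then shows that the composite map is $(2n-4)$-connected, independent of $g$.

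For part (ii), I would analyze the long exact sequence of the homotopy fibre sequence
\begin{equation*}
\Diff_\partial(D^{2n}) \lra \blockdiff_\partial(D^{2n}) \lra \blockdiff_\partial(D^{2n})/\Diff_\partial(D^{2n}),
\end{equation*}
and show that $\pi_k$ of the first two terms is finite in the claimed range. For $\blockdiff_\partial(D^{2n})$, I would identify $B\blockdiff_\partial(D^{2n})$ with a block structure space and feed it into the surgery exact sequence \eqref{quinnsfibration}. The point is that for the disc, both $\bN(D^{2n}, \partial)$ and $\bL(D^{2n})$ have the same rational homotopy (concentrated in degrees $\equiv 0\pmod 4$), and the surgery obstruction map is the multiplicative shift by $8$ coming from the signature formula, so it is a rational isomorphism. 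Hence $\widetilde{\bS}(D^{2n},\partial)$ is rationally contractible and $\pi_k\blockdiff_\partial(D^{2n})$ is finite for every $k$.

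For $\Diff_\partial(D^{2n})$, the inequalities $2n \geq 2k+7$ and $2n \geq 3k+4$ are exactly Igusa's concordance stable range, in which the stabilisation map from $\Diff_\partial(D^{2n})$ to the stable pseudoisotopy space is injective on $\pi_k$ (up to a degree shift). In this range, Farrell--Hsiang's computation, combined with Waldhausen's identification of the rational stable pseudoisotopy space with a truncated algebraic $K$-theory of $\bZ$ (rationally computed by Borel), yields $\pi_k\Diff_\partial(D^{2n}) \otimes \bQ = 0$ when the disc dimension $2n$ is even. Thus $\pi_k\Diff_\partial(D^{2n})$ is finite in the claimed range, and so by the long exact sequence $\pi_k(\blockdiff_\partial(D^{2n})/\Diff_\partial(D^{2n}))$ is finite.

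The main technical obstacle is the precise tracking of numerical constants. The Morlet disjunction argument for (i) must be carried out carefully enough to extract the exact bound $2n-4$, which requires an obstruction-theoretic analysis of the contribution of each $n$-handle rather than a formal iteration; and in (ii) the ranges supplied by Igusa's concordance stability and by Farrell--Hsiang need to match up on the nose, which is what forces both inequalities $2n \geq 2k+7$ and $2n \geq 3k+4$ rather than a single one. Conceptually, however, once Morlet's lemma is invoked the problem is reduced to the well-understood disc case, where surgery theory and Waldhausen/Farrell--Hsiang directly yield the conclusion.
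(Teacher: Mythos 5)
Your proposal is correct and follows essentially the same route as the paper, which simply cites Burghelea--Lashof--Rothenberg (Morlet's lemma of disjunction) for part (i) and Farrell--Hsiang or Weiss--Williams for part (ii). Your decomposition of (ii) via the fibration $\Diff_\partial(D^{2n}) \to \blockdiff_\partial(D^{2n}) \to \blockdiff_\partial(D^{2n})/\Diff_\partial(D^{2n})$, using the contractibility of $\blockaut_\partial(D^{2n})$ to identify $B\blockdiff_\partial(D^{2n})$ with the block structure space and then invoking Kervaire--Milnor and Farrell--Hsiang, is exactly the content behind those citations.
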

\begin{proof}
The first assertion is a consequence of Morlet's lemma of disjunction, and is given in \cite[Corollary 3.2]{BLR}. The second assertion follows from \cite{FHs} or \cite[\S 6.1]{WW2}.
\end{proof}

\section{The final spectral sequence argument}\label{section:final-argument}

Recall that in Theorem \ref{mt-spectra-index} we have constructed a homotopy-commutative square
\begin{equation}\label{eq:MainDiagram}
\begin{gathered}
\xymatrix{
 B \Diff (W_{g}^{2n},D) \ar[d]^{\alpha_{g}^{2n}} \ar[r]^{\zeta} & B \Gamma(W_{g}^{2n}) \ar[d]^{\beta_{g}^{2n}}\\
 \loopinf{}_{0} \MTthetan \ar[r]^{\symb } & \loopinf{+k_n}_{0} \KO,
}
\end{gathered}
\end{equation}
where the vertical arrows are rational cohomology equivalences in the stable range. Next, we take the rationalisation of the lower row and obtain
\begin{equation}\label{eq:MainDiagram1}
\begin{gathered}
\xymatrix{
 B \Diff (W_{g}^{2n},D) \ar[d]^{\alpha_{g}^{2n}} \ar[r]^{\zeta} & B \Gamma(W_{g}^{2n}) \ar[d]^{\beta_{g}^{2n}}\\
 \loopinf{}_{0} \MTthetan_{\bQ} \ar[r]^{\symb_{\bQ} } & \loopinf{+k_n}_{0} \KO_{\bQ}
}
\end{gathered}
\end{equation}
and both spaces in the lower row are connected and simply connected. We wish to compare the Leray--Serre spectral sequences of the rows, so we replace the diagram with an equivalent map which is a map of fibration sequences, by Lemma \ref{coherence} below.
\begin{lemma}\label{coherence}
Let 
\begin{equation*}
\begin{gathered}
\xymatrix{
E \ar[r]^{p} \ar[d]^{f} & B \ar[d]^{g}\\
F \ar[r]^{q} & A
}
\end{gathered}
\end{equation*}
be a homotopy commutative diagram. Then we can replace it (in the homotopy category of diagrams) by a strictly commutative diagram with $p$ and $q$ fibrations.
\end{lemma}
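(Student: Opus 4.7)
The plan is to use the standard path-space fibrant replacement functor and to exploit the given homotopy $H: gp \simeq qf$ to glue the two replacements into a strictly commutative square. Recall that for any map $\phi: X \to Y$, the space
\begin{equation*}
P\phi := X \times_Y Y^I = \{ (x,\gamma) \in X \times Y^I \,\vert\, \gamma(0) = \phi(x)\}
\end{equation*}
comes with a Hurewicz fibration $\tilde{\phi}: P\phi \to Y$, $(x,\gamma) \mapsto \gamma(1)$, and a homotopy equivalence $X \to P\phi$, $x \mapsto (x, \mathrm{const}_{\phi(x)})$, through which $\phi$ factors.

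First I would apply this construction to both $p: E \to B$ and $q: F \to A$, producing fibrations $p': E' := Pp \to B$ and $q': F' := Pq \to A$ together with homotopy equivalences $i_E: E \to E'$ and $i_F: F \to F'$. The obvious square
\begin{equation*}
\xymatrix{
E' \ar[r]^{p'} \ar@{-->}[d]^{f'} & B \ar[d]^{g} \\
F' \ar[r]^{q'} & A
}
\end{equation*}
needs the dashed arrow, and I would define it using the chosen homotopy $H: E \times I \to A$ satisfying $H(-,0) = qf$, $H(-,1) = gp$. Set
\begin{equation*}
f'(x,\gamma) := \bigl(f(x),\; H(x,-) \cdot (g \circ \gamma)\bigr),
\end{equation*}
where $\cdot$ denotes path concatenation (so the resulting path runs from $qf(x)$ to $gp(x)$ along $H(x,-)$ and then from $gp(x)$ to $g(\gamma(1))$ along $g\gamma$). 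Then $q'(f'(x,\gamma)) = g(\gamma(1)) = g(p'(x,\gamma))$, so the square commutes strictly.

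Finally I would verify that this replacement is equivalent to the original diagram in the homotopy category of commutative squares. The maps $i_E, i_F, \mathrm{id}_B, \mathrm{id}_A$ form a morphism from the original (homotopy commutative) square to the new (strictly commutative) one: the right-hand face commutes strictly by construction of $p'$, while the left-hand face commutes up to the canonical homotopy that retracts the concatenated path $H(x,-) \cdot \mathrm{const}_{gp(x)}$ onto $H(x,-)$ inside $Pq$. Since $i_E$ and $i_F$ are weak equivalences and $\mathrm{id}_B, \mathrm{id}_A$ are identities, this is a weak equivalence of squares, which is exactly what ``replace in the homotopy category of diagrams'' requires.

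There is no serious obstacle here: the only point worth being careful about is the coherence of the comparison map from the original to the rectified square, i.e.\ checking that the left face of the comparison commutes \emph{up to a specified homotopy}, so that the two diagrams represent the same object in the homotopy category of squares. Everything else is a routine unwinding of the path-space construction.
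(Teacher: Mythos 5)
Your argument is the same as the paper's, just written slightly more concretely: both replace $p$ and $q$ by their path-space fibrations $E \times_B B^I$ and $F \times_A A^I$, and both use the given homotopy $H : qf \leadsto gp$ to manufacture the new vertical map; the explicit formula $f'(x,\gamma) = \bigl(f(x), H(x,-)\cdot(g\circ\gamma)\bigr)$ you write is exactly what results from unwinding the paper's two-step procedure (first lift $H$ along the fibration $F^f \to A$ to get $\hat f|_E$, then extend over $E^f$ by composing with $g\circ\gamma$). The verification of the comparison map is handled at the same level of informality in both arguments, so the proof is correct and essentially identical to the paper's.
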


\begin{proof}
Let us write $\hat{q}: F^f := F \times_A A^I \to A$ for the standard path-space replacement of $F \to A$ by a Hurewicz fibration. Choose a homotopy $H : q\circ f \leadsto g\circ p$ which exhibits the original square as commuting up to homotopy. Lifting the homotopy $H$ starting at $E \overset{f}\to F \hookrightarrow F^f$ gives a map $\hat{f}\vert_E : E \to F^f$ such that $\hat{q} \circ \hat{f}\vert_E = g \circ p$. We have produced an equivalent square which strictly commutes, but $p$ is not necessarily a fibration. However, if we write $\hat{p}: E^f := E \times_B B^I \to B$ for the standard path-space replacement of $E \to B$, then $\hat{f}\vert_E : E \to F^f$ extends to a map $E^f \to F^f$ over $g$ by sending a pair $(e \in E, \gamma : p(e) \leadsto x)$ to the pair $(\hat{f}\vert_E(e), g\circ \gamma : g\circ p(e) \leadsto g(x))$.
\end{proof}

This lemma yields a commutative diagram 
\begin{equation}\label{eq:MainDiagram2}
\begin{gathered}
\xymatrix{
B \Tor_{g,1}^{2n} \ar[r] \ar[d]  &  B \Diff (W_{g}^{2n},D) \ar[d]^{\alpha_{g}^{2n}} \ar[r]^{\zeta} & B \Gamma(W_{g}^{2n}) \ar[d]^{\beta_{g}^{2n}}\\
\Omega^{\infty} F \ar[r] & \loopinf{}_{0} \MTthetan_{\bQ} \ar[r]^{\symb_{\bQ} } & \loopinf{+k_n}_{0} \KO_{\bQ},
}
\end{gathered}
\end{equation}
where the rows are fibration sequences, $\Omega^{\infty} F$ is by definition the homotopy fibre of $\symb_{\bQ}$ (which is an infinite loop map), and the left vertical map depends on the choice of the homotopy making the square \eqref{eq:MainDiagram1} commute.

Consider first the top row of \eqref{eq:MainDiagram2}. We have seen that $H^q (B \Tor_{g,1}^{n};\bC)$ is an arithmetic $\Gamma (W_{g,1}^{2n})$-representation of degree $\leq q$, in the stable range. By Proposition \ref{prop:corollary-to-borel} and Lemma \ref{gamma-group:zariski-dense}, we have an isomorphism
$$H^p (\Gamma (W_{g,1}^{2n});H^q (B \Tor_{g,1}^{2n};\bC)) \cong H^p (\Gamma (W_{g,1}^{2n});\bC) \otimes H^q (B \Tor_{g,1}^{2n};\bC))^{\Gamma (W_{g,1}^{2n})},$$
as long as $q$ is in the concordance stable range and $p \leq g-2-q$. 

As the lower row of \eqref{eq:MainDiagram} is a fibration of infinite loop spaces with simply connected base, its spectral sequence collapses and the coefficient systems are constant. Thus the comparison map is a map of spectral sequences of algebras that have a product structure (the first one only in the concordance stable range). The $E^{2}_{*,0}$-map is an isomorphism by Theorem \ref{borelstheorem} and the target map is an isomorphism by Galatius and Randal-Williams' theorem. By the Zeeman comparison theorem \cite[Theorem 2]{Zeeman} the map on the $E^{2}_{0,*}$ line is also an isomorphism, and so the natural map
$$H^q (\Omega^{\infty}F;\bC) \lra H^q (B \Tor_{g,1}^{2n};\bC)^{\Gamma (W_{g,1}^{2n})}$$
is an isomorphism too, in the various stable ranges.

The computation of the map $\symb$ in rational cohomology (Theorem \ref{mt-spectra-index}) shows that $H^* (\Omega^\infty F; \bQ)$ is equal to the source of the natural map in Theorem \ref{main-result}, and this finishes the proof of Theorem \ref{main-result}.
The quotient of the algebra (\ref{eq:cohom-calc}) by the ideal generated by the elements $\kappa_{\cl_i}$ is then $\bQ[\lambda_{\B{i}},\mu_{\B{j}} \,\,\vert\,\, \B{i},\B{j} \in \bN_{0}^{I}, w (\B{i})>2n, w(\B{j}) >0, |\B{i}| \geq 2]$, and Theorem \ref{main-result} may be stated in the following equivalent form.

\begin{theorem}\label{thm:StableCalc}
The natural map 
\begin{equation*}
\bQ[\lambda_{\B{i}},\mu_{\B{j}} \,\,\vert\,\, \B{i},\B{j} \in \bN_{0}^{I}, w (\B{i})>2n, w(\B{j}) >0, |\B{i}| \geq 2] \lra H^* (B \Tor_{g, 1}^{2n};\bQ)^{\Gamma (W_{g, 1}^{2n})}
\end{equation*}
is an isomorphism in degrees $* \leq C_{g}^{2n}$.
\end{theorem}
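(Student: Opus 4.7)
The plan is to deduce Theorem \ref{thm:StableCalc} as a purely algebraic reformulation of Theorem \ref{main-result}, i.e.\ by carrying out the computation of the quotient algebra promised in the sentence immediately preceding the statement. No new topology is required; the spectral sequence argument has already been completed.

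First I would use Theorem \ref{main-result} (now established) to replace the target with
$$H^{*}(B\Diff_\partial(W_{g,1}^{2n});\bQ)\big/\big(\mathrm{im}\,H^{*>0}(B\Gamma(W_{g,1}^{2n});\bQ)\big)$$
in the stable range. Next I would use the Galatius--Randal-Williams rational homology equivalence $\alpha_g^{2n}$ of Theorem \ref{mt-spectra-index} to identify the numerator with $H^{*}(\loopinf{}_{0}\MTthetan;\bQ)$, whose presentation is furnished by equation \eqref{eq:cohom-calc}. I would then identify the relevant ideal: by the homotopy-commutativity of the square of Theorem \ref{mt-spectra-index} together with the fact that $(\beta_g^{2n})^{*}$ is an isomorphism in the stable range, the positive-degree image of $\zeta^{*}\circ(\beta_g^{2n})^{*}$ coincides with the positive-degree image of $\symb^{*}$. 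The final sentence of Theorem \ref{mt-spectra-index} states that this image is the subalgebra generated by the Miller--Morita--Mumford classes $\kappa_{\cl_i}$ for $\ceil{(n+1)/4}\leq i \leq n$, so the ideal it generates is the same as the ideal generated by the $\kappa_{\cl_i}$ themselves.

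The last step is a bookkeeping exercise in the polynomial algebra \eqref{eq:cohom-calc}. For $i\in I=\{\ceil{(n+1)/4},\ldots,n\}$, write $\B{e}_i\in\bN_0^{I}$ for the multi-index with a $1$ in slot $i$ and zeros elsewhere; then $|\B{e}_i|=1$ and $w(\B{e}_i)=4i$. By the very definition of $\lambda_{\B{i}}$, one has $\lambda_{\B{e}_i}=\kappa_{\cl_i}$, and $w(\B{e}_i)>2n$ is exactly the condition $4i>2n$ which holds for those indices at which $\kappa_{\cl_i}$ is a positive-degree polynomial generator. Quotienting the free graded commutative $\bQ$-algebra \eqref{eq:cohom-calc} by the ideal generated by $\{\lambda_{\B{e}_i}\}_{i\in I}$ simply kills the corresponding polynomial generators, leaving the free graded commutative algebra on the remaining generators, which is exactly the algebra displayed in Theorem \ref{thm:StableCalc}.

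The only thing that requires care, and which I would flag as the main (mild) obstacle, is the verification in the third step: one must check that under the algebra presentation \eqref{eq:cohom-calc}, the classes $\kappa_{\cl_i}$ actually coincide with the algebra generators $\lambda_{\B{e}_i}$ and not with some nonlinear combination involving the $\mu_{\B{j}}$. This follows directly from the definition of $\lambda_{\B{i}}$ as $\kappa_{\prod_{j\in I}\cl_j^{i_j}}$, and the fact that the $\kappa_{\cl_i}$ are genuine polynomial generators (not decomposable) on the infinite loop space side because their degrees $4i-2n$ are strictly smaller than the degrees of any nontrivial product of two other generators, but it deserves to be written out explicitly.
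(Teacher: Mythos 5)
Your proof is correct and is essentially the paper's own argument: the theorem is a purely algebraic reformulation of Theorem~\ref{main-result}, obtained by quotienting the free graded-commutative presentation \eqref{eq:cohom-calc} by the ideal generated by the positive-degree classes $\kappa_{\cl_i}=\lambda_{\B{e}_i}$, which simply deletes those free generators and leaves the free algebra on the generators with $|\B{i}|\geq 2$ together with all the $\mu_{\B{j}}$. One small caution about your ``flag'': the supplementary degree argument you give for indecomposability is not actually correct (for instance $\kappa_{\cl_n}=\lambda_{\B{e}_n}$ has degree $2n$, which exceeds the degree of many products $\lambda_{\B{i}}\lambda_{\B{i}'}$ with $|\B{i}|=|\B{i}'|=2$); no such argument is needed, since \eqref{eq:cohom-calc} is asserted to be a \emph{free} polynomial presentation and $\lambda_{\B{e}_i}$ is by definition one of its free generators, which already settles the question.
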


We will now show how to deduce Theorem \ref{main-result:calc} from this form of Theorem \ref{main-result}.

\begin{proof}[Proof of Theorem \ref{main-result:calc}]
We are only interested in the stable range $C_g^{2n} \leq n-3$. The classes $\mu_{\B{j}}$ have degree $w(\B{j}) \geq n+1$ so do not contribute to the stable range. The classes $\lambda_{\B{i}}$ have degree $w(\B{i})-2n$, so in Theorem \ref{thm:StableCalc}  only those with $\vert \B{i} \vert = 2$ can possibly occur in the stable range. These are exactly the classes $\kappa_{\cl_a \cl_b}$ with $\tfrac{n+1}{4} \leq a \leq b$, as claimed in Theorem \ref{main-result:calc}.
\end{proof}

\section{Relation to classical invariant theory}\label{sec:InvThy}

The work of Berglund and Madsen \cite{BM} may be used to compute the rational cohomology of $B\Tor_{g,1}^{2n}$ in the stable range as a $\Gamma(W_{g,1}^{2n})$-module, and so the right hand side of the map of Theorem \ref{main-result} may also be approached by the invariant theory of the groups $\Sp_{2g}$ and $\Ort_{g,g}$. In this short section we briefly compare these approaches.

\begin{proposition}
Let $P_*$ be the graded vector space obtained from $\pi_*(G/O) \otimes \bQ$ by shifting degrees down by $n$ and then discarding all terms which do not have strictly positive grading. There is an isomorphism of graded $\Gamma(W_{g,1}^{2n})$-modules
$$H^*(B\Tor_{g,1}^{2n};\bQ) \cong S[V \otimes P_*]$$
in degrees $* < n-1$, where $V$ is the fundamental representation of $\Sp_{2g}(\bZ)$ or $\Ort_{g,g}(\bZ)$, and $S$ denotes the free graded-commutative algebra.
\end{proposition}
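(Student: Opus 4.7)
The plan is to combine the surgery-theoretic computation of the rational cohomology of the block structure space---essentially Berglund--Madsen's contribution \cite{BM}---with the block-to-non-block comparison that was already set up in Section \ref{section:rational-homotopy}. This proposition is not logically used elsewhere in the paper; its purpose is to explain how \cite{BM} may be used to upgrade the description of the $\Gamma$-invariants in Theorem \ref{main-result:calc} to a description of the full graded $\Gamma(W_{g,1}^{2n})$-module structure on $H^*(B\Tor_{g,1}^{2n};\bQ)$.

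First I would compute the rational homotopy of the block structure space $\widetilde{\bS}(W,\partial W)_0$. After rationalisation the surgery fibration may be taken to consist of infinite loop spaces and infinite loop maps, and \cite[Lemma 3.4]{BM} gives that $\sigma$ is rationally surjective on homotopy, yielding a $\Gamma$-equivariant short exact sequence on rational homotopy in positive degrees. The middle term was computed in Lemma \ref{cohomology-space-normal-invariants} to be $(V \otimes S_{k+n}) \oplus S_{k+2n}$ as a $\Gamma$-module, where $V = H^n(W;\bQ)$ and $S_k = \pi_k(\mathrm{G}/\Ort)\otimes \bQ$, while $\pi_k(\bL(W))\otimes \bQ$ is a $\Gamma$-trivial line matching the $S_{k+2n}$ summand. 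Identifying the surgery obstruction as rationally projecting onto this trivial summand, one obtains
$$\pi_k(\widetilde{\bS}(W,\partial W)_0)\otimes \bQ \cong V \otimes S_{k+n} = V \otimes P_k$$
as $\Gamma$-modules. Since $\widetilde{\bS}(W,\partial W)_0$ is a rational infinite loop space (as the homotopy fibre of a map of such) its rational cohomology is a free graded-commutative algebra on the graded dual of its rational homotopy; self-duality of $V$ under the intersection pairing then gives the $\Gamma$-equivariant isomorphism
$$H^*(\widetilde{\bS}(W,\partial W)_0;\bQ) \cong S[V \otimes P_*].$$

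Second I would transfer this identification to $B\Tor_{g,1}^{2n}$. Lemma \ref{structure-space-vs-homogeneous-space} produces a $\Diff(W)^\delta$-equivariant equivalence with $(\tilde{G}\hcoker\tilde{D})_0$, which coincides with $\tilde{G}'\hcoker\tilde{D}$ because the latter is connected (as $\pi_0(\tilde{D}) \to \pi_0(\tilde{G}')$ is surjective by the definition of $\tilde{G}'$). The fibration
$$\tilde{G}'\hcoker\tilde{D} \lra \Gamma\hcoker\tilde{D} \lra \Gamma\hcoker\tilde{G}'$$
has a base with only finite rational homotopy in degrees $* \leq n-1$ (by \cite[Theorems 2.10 and 2.12]{BM}, as exploited in the proof of Lemma \ref{lem:inj-Gamma-rep}), so its rational Leray--Serre spectral sequence collapses to yield a $\Gamma$-equivariant isomorphism $H^*(\Gamma\hcoker\tilde{D};\bQ) \cong H^*(\tilde{G}'\hcoker\tilde{D};\bQ)$ in that range. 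Finally, the end of Section \ref{section:rational-homotopy} shows (via Morlet's lemma of disjunction) that $\tilde{D}\hcoker D$ has trivial rational cohomology in the stable range, so the Leray--Serre spectral sequence of $\tilde{D}\hcoker D \to \Gamma\hcoker D \to \Gamma\hcoker\tilde{D}$ delivers $H^*(B\Tor_{g,1}^{2n};\bQ) \cong H^*(\Gamma\hcoker\tilde{D};\bQ)$ as $\Gamma$-modules in the stable range, which lies inside $* < n-1$.

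The hardest step will be the $\Gamma$-equivariant identification of the rational surgery obstruction $\sigma$. It is clear that $\sigma$ is rationally surjective and that $\pi_*(\bL(W))\otimes \bQ$ has the same dimensions as the $\Gamma$-trivial summand $S_{*+2n}$; the content is that $\sigma$ hits this summand isomorphically rather than some other $\Gamma$-invariant combination of the two summands of $\pi_*(\bN(W,\partial W)_0)\otimes \bQ$. Rationally this follows from the fact that the simply-connected surgery obstruction is detected by a signature computation, whose variation in a family factors through integration against the fundamental class of $W$ and therefore lives in the $\Gamma$-trivial direction. A secondary bookkeeping task is to keep track of the $\Gamma$-action through the chain of homotopy-fibre replacements, which is handled in the same way as in the proof of Proposition \ref{block-arithmeticity-theorem}.
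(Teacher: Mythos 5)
Your proposal is correct in outline, but takes a genuinely different route from the paper in the first step. The paper's proof of this proposition simply cites \cite[Theorem 3.5]{BM}, which directly identifies $H^*(\tilde{G}'\hcoker\tilde{D};\bQ)$ with $S[V\otimes P_*]$ as a $\pi_0(\tilde{G}')$-module. You instead re-derive this identification from scratch by running the surgery fibration on rational homotopy groups. This has the virtue of making the representation-theoretic input visible, but it requires you to assume that the surgery map $\sigma$ is $\Diff(W)^\delta$-equivariant (or at least induces a $\Gamma$-equivariant map on rational homotopy), which the paper never establishes -- the paper only constructs an equivariant replacement $\eta'$ of $\eta$, not of $\sigma$. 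If one does grant this equivariance, the ``hardest step'' you flag has a cleaner resolution than the signature argument you sketch: $\bL(W)$ is a trivial $\Gamma$-module, while $V\otimes S_{k+n}$ is a direct sum of copies of the nontrivial irreducible $V$ and hence has no trivial summand, so any $\Gamma$-equivariant map from $(V\otimes S_{k+n})\oplus S_{k+2n}$ onto the trivial module $\pi_k(\bL(W))\otimes\bQ$ must kill the $V\otimes S_{k+n}$ summand.

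The middle step of your proof is slightly imprecise in a way worth flagging. Finite rational homotopy of the base $\Gamma\hcoker\tilde{G}'$ in low degrees does make its positive-degree rational cohomology vanish, but the Leray--Serre spectral sequence then gives
$$H^*(\Gamma\hcoker\tilde{D};\bQ) \cong H^*(\tilde{G}'\hcoker\tilde{D};\bQ)^{\pi_1(\Gamma\hcoker\tilde{G}')}$$
in degrees $* < n-1$, and one still has to argue that the coefficient action of $\pi_1(\Gamma\hcoker\tilde{G}')$ is trivial before identifying this with the full cohomology of the fibre. The paper supplies exactly this: the $\pi_1(\Gamma\hcoker\tilde{G}')$-action factors through $\pi_1(B\tilde{G}')=\pi_0(\tilde{G}')$, whose action on $H^*(\tilde{G}'\hcoker\tilde{D};\bQ)$ is shown (by the first step) to factor through $\Gamma$; since $\pi_1(\Gamma\hcoker\tilde{G}')$ is the kernel of $\pi_0(\tilde{G}')\to\Gamma$, its action is trivial. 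Your ``secondary bookkeeping'' remark gestures at this, but the triviality of the $\pi_1$-action is really the content here, not bookkeeping. Finally, you are right to make the block-to-non-block comparison explicit (the paper leaves it implicit, as it is established at the end of Section \ref{section:rational-homotopy}), but note that this step only holds in the stable range $*\leq C_g^{2n}$, which is strictly smaller than $* < n-1$; the identification in degrees $* < n-1$ should be read as a statement about the block Torelli space $\Gamma\hcoker\tilde{D}$, with the passage to $B\Tor_{g,1}^{2n}$ valid only in the stable range.
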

\begin{proof}
By \cite[Theorem 3.5]{BM} the rational cohomology of $\tilde{G}' \hcoker \tilde{D}$ as a $\pi_0(\tilde{G}')$-module is identified with the free algebra in the statement of the theorem, so in particular the $\pi_0(\tilde{G}')$-action on $H^*(\tilde{G}' \hcoker \tilde{D};\bQ)$ factors through $\Gamma$. We now consider the fibration sequence \eqref{eq:FibSeq}, where we have shown that we obtain an isomorphism
$$H^*(\Gamma \hcoker \tilde{D} ;\bQ) \lra H^*(\tilde{G}' \hcoker \tilde{D};\bQ)^{\pi_1(\Gamma \hcoker \tilde{G}')}$$
in degrees $* < n-1$. But the action of $\pi_1(\Gamma \hcoker \tilde{G}')$ on $H^*(\tilde{G}' \hcoker \tilde{D};\bQ)$ is through $\pi_1(B\tilde{G}')=\pi_0(\tilde{G}')$ which we have seen acts through $\Gamma$, and is hence the $\pi_1(\Gamma \hcoker \tilde{G}')$-action is trivial. Thus $H^*(\Gamma \hcoker \tilde{D} ;\bQ) \lra H^*(\tilde{G}' \hcoker \tilde{D};\bQ)$ is an isomorphism in degrees $* < n-1$.
\end{proof}

Let us consider Theorem \ref{main-result:calc} in the case $n=4k$, with $k \gg 0$ and in degrees $* < n$. Let us rename the class $\kappa_{\cl_{k+a}\cl_{k+b}}$, with $a \leq b$ and both at least 1, by $\rho_{a,b}$, which has degree $4(a+b)$. Then Theorem \ref{thm:StableCalc} implies that the map
$$\bQ[\rho_{a,b} \,\vert\, a, b \in \{ 1,\ldots , 3k \}] \lra \mathrm{Sym}^\bullet[V \otimes(\bQ[4] \oplus \bQ[8] \oplus \bQ[12] \oplus \cdots)]^{\Ort_{g,g}}$$
is an isomorphism in cohomological degrees $* \leq \frac{g-5}{2}$ (as we have supposed that $k$, and so $n$, is very large). We obtain as a corollary the following weak form of the fundamental theorem of invariant theory.

\begin{corollary}
In degrees $\bullet \leq \frac{g-3}{8}$, the ring of invariants $\mathrm{Sym}^\bullet[V]^{\Ort_{g,g}}$ is polynomial generated by $\omega \in \mathrm{Sym}^2[V]$, the element representing the (dual of the) pairing.
\end{corollary}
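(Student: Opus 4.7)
The plan is to extract the Corollary directly from the displayed isomorphism just preceding its statement, by equipping both sides with an auxiliary weight grading and isolating the bi-graded piece of minimal cohomological degree.

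First I would endow both sides of
\begin{equation*}
\bQ[\rho_{a,b} \mid a,b \in \{1,\ldots,3k\}] \;\xrightarrow{\;\sim\;}\; \mathrm{Sym}^\bullet\bigl[V \otimes(\bQ[4]\oplus\bQ[8]\oplus\cdots)\bigr]^{\Ort_{g,g}}
\end{equation*}
with an auxiliary $\bN$-grading by \emph{weight}, counting the total number of $V$-factors in a monomial. On the right, this is simply the symmetric-power degree. On the left, I would assign each generator $\rho_{a,b}$ weight $2$, which is consistent with the Berglund--Madsen identification used in the preceding Proposition: $\rho_{a,b}$ corresponds to the pairing element of $(V[4a]\otimes V[4b])^{\Ort_{g,g}}$, sitting in bi-degree $(2,\,4(a+b))$. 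With these conventions the isomorphism respects both the cohomological grading and the new weight grading.

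Next I would isolate the $(s,4s)$-bi-graded summand on each side, that is, the piece of weight $s$ sitting in the \emph{minimal} possible cohomological degree $4s$. On the right, such a monomial must have every $V$-factor placed in $V[4]$, so this summand is canonically $\mathrm{Sym}^s[V]^{\Ort_{g,g}}$ concentrated in cohomological degree $4s$. On the left, a product of $j$ generators has weight $2j$ and cohomological degree $4\sum_i(a_i+b_i) \geq 8j$, with equality only when $a_i=b_i=1$ for every $i$; hence the minimal piece is the one-dimensional $\bQ\cdot\rho_{1,1}^j$ when $s=2j$ is even, and zero when $s$ is odd.

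Combining these two descriptions via the isomorphism then yields $\mathrm{Sym}^{2j}[V]^{\Ort_{g,g}} \cong \bQ\cdot\omega^j$ and $\mathrm{Sym}^{2j+1}[V]^{\Ort_{g,g}} = 0$ in the claimed range, where $\omega$ is (a nonzero multiple of) the image of $\rho_{1,1}$, i.e.\ the pairing. Since $\mathrm{Sym}^\bullet[V]^{\Ort_{g,g}}$ sits in cohomological degree $4\bullet$, the range $\bullet \leq (g-3)/8$ drops out of the bound $4\bullet \leq C_g^{2n} \leq (g-3)/2$ from Theorem \ref{main-result}. I expect no genuine obstacle: the argument is essentially formal, and the only point requiring care is verifying that the Berglund--Madsen construction really sends $\rho_{a,b}$ to the pairing element in bi-degree $(2,4(a+b))$, securing the compatibility of the weight grading with the displayed isomorphism.
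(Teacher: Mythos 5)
The key step you flag as "the only point requiring care" is in fact the substantive content of the whole argument, and nothing in the paper (or in Berglund--Madsen) establishes it. The isomorphism $\bQ[\rho_{a,b}] \xrightarrow{\sim} \mathrm{Sym}^\bullet[V\otimes(\bQ[4]\oplus\bQ[8]\oplus\cdots)]^{\Ort_{g,g}}$ is assembled from two quite different ingredients: on the source, the Galatius--Randal-Williams computation of $H^*(\Omega^\infty F;\bQ)$ via $\kappa$-classes, and on the target, the Berglund--Madsen surgery-theoretic identification of $H^*(B\Tor_{g,1}^{2n};\bQ)$ with $S[V\otimes P_*]$. No analysis of how MMM classes behave under the Berglund--Madsen map is performed anywhere, so there is no a priori reason why the image of $\rho_{a,b}=\kappa_{\cl_{k+a}\cl_{k+b}}$ should be a pure weight-$2$ element rather than, say, a weight-$2$ element plus a decomposable of weight $4$. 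Since a free graded-commutative algebra admits many generating sets in a given cohomological degree (they are only determined modulo decomposables), assigning weight $2$ to $\rho_{a,b}$ is not an inherent grading of the source but an assertion about the image of a specific element, and that assertion has no justification in the text. Without it, your step of "isolating the bi-degree $(s,4s)$ piece" on both sides is not meaningful on the left.

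The paper's proof is specifically designed to sidestep this issue. It never identifies where the $\rho_{a,b}$ go. Instead it argues: (i) the isomorphism forces the ring of invariants to be polynomial with exactly $\lfloor i/2\rfloor$ generators in cohomological degree $4i$; (ii) the explicit elements $\omega_{x,y}$ built from the pairing provide $\lfloor i/2\rfloor$ linearly independent classes in degree $4i$, and they are automatically indecomposable because there are no invariants of weight $1$ (so decomposables have weight $\geq 4$ while the $\omega_{x,y}$ have weight $2$); (iii) counting dimensions forces the $\omega_{x,y}$ to be a polynomial generating set; (iv) restrict to the multi-graded subring $\mathrm{Sym}^\bullet[V\otimes\bQ[4]]$. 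In other words, the dimension count is what \emph{proves} that a weight-$2$ generating set exists; your argument assumes this at the outset. If you want to save your organisation of the proof, you can first run exactly this count to deduce that the indecomposables equal $R^{(2)}$ (so that the isomorphism \emph{can be adjusted by decomposables} to respect the weight grading) and then perform your bi-degree extraction — but at that point you have reproduced the paper's argument with extra steps.
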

\begin{proof}
By the isomorphism obtained above, we see that $\mathrm{Sym}^\bullet[V \otimes(\bQ[4] \oplus \bQ[8] \oplus \bQ[12] \oplus \cdots)]^{\Ort_{g,g}}$ is a polynomial ring concentrated in degrees divisible by 4, with generators in cohomological degree $4i$ given by the $\rho_{a,b}$ with $a+b=i$. Thus the generators in degree $4i$ are given by partitions of $i$ into two parts of sizes $\{ 1,\ldots , 3k \}$, so as we have supposed $k \gg 0$, by partitions of $i$ into two proper parts.

On the other hand, the canonical invariant $\omega \in \mathrm{Sym}^2[V]$ gives many invariants in $\mathrm{Sym}^2[V \otimes(\bQ[4] \oplus \bQ[8] \oplus \bQ[12] \oplus \cdots)]^{\Ort_{g,g}}$; for each pair of integers $x \leq y$ in $\{4, 8, 12, \ldots\}$ we have an invariant $\omega_{x,y} \in (V \otimes \bQ[x]) \otimes (V \otimes \bQ[y])$ of cohomological degree $x+y$. Thus in cohomological degree $4i$ we have as many $\omega_{x,y}$ as the number of partitions of $i$ into two proper parts. It is clear that $\mathrm{Sym}^1[V \otimes(\bQ[4] \oplus \bQ[8] \oplus \bQ[12] \oplus \cdots)]$ has no invariants, so the $\omega_{x,y}$ are all indecomposable, and they are also clearly linearly independent. Hence by counting dimensions
$$\bQ[\omega_{x, y}\,\vert\, x \leq  y \in \{4, 8, 12, \ldots\}] \lra \mathrm{Sym}^\bullet[V \otimes(\bQ[4] \oplus \bQ[8] \oplus \bQ[12] \oplus \cdots)]^{\Ort_{g,g}}$$
is an isomorphism in cohomological degrees $* \leq \frac{g-3}{2}$. By observation the intersection with the subring $\mathrm{Sym}^\bullet[V\otimes \bQ[4]]^{\Ort_{g,g}}$ is $\bQ[\omega_{4,4}]$; changing from cohomological degrees $*$ to symmetric-power degrees $\bullet$ gives a factor of 4, so we obtain the statement in the corollary.
\end{proof}

One may improve the range of degrees to $\bullet \leq \frac{g-3}{4}$ by considering $n=4k+2$ instead, but the counting arguments are a little more complicated. One may also consider $n=4k+1$ to show that $\Lambda^\bullet[V]^{\Sp_{2g}}$ is polynomial on the canonical class in $\Lambda^2[V]$ dual to the pairing, for $\bullet \leq \frac{g-3}{2}$.

\bibliographystyle{alpha}

\end{document}